\newcommand\mtop{1in}
\newcommand\mbottom{1in}
\newcommand\mleft{1in}
\newcommand\mright{1in}
\newtheorem{Theorem}{Theorem}[section]
\newtheorem{Remark}{Remark}
\newtheorem*{Hyp}{Hypothesis}
\newtheorem{Prop}[Theorem]{Proposition}
\newtheorem{Claim}{Claim}
\newtheorem{Lemma}[Theorem]{Lemma}
\theoremstyle{definition}
\newtheorem{Definition}{Definition}
\newcommand{\R}{\mathbb{R}}
\newcommand{\Z}{\mathbb{Z}}
\newcommand{\E}{\mathbb{E}}
\newcommand{\EE}{\mathbf{E}}
\newcommand\reallywidehat[1]{%
    \savestack{\tmpbox}{\stretchto{%
            \scaleto{%
                \scalerel*[\widthof{\ensuremath{#1}}]{\kern-.6pt\bigwedge\kern-.6pt}%
                {\rule[-\textheight/2]{1ex}{\textheight}}
            }{\textheight}%
        }{0.5ex}}%
    \stackon[1pt]{#1}{\tmpbox}%
}
\newcommand{\eps}{\epsilon}
\title{Nontrivial upper bound for chemical distance on the planar random cluster model}
\author{Lily Reeves}
\email{lreeves@caltech.edu}
\thanks{The research of L.\ R.\ is supported by NSF grant DMS-2303316}
\begin{document}
\maketitle

\begin{abstract}
    We extend the upper bounds derived for the horizontal and radial chemical distance for $2d$ Bernoulli percolation in \cite{DHS21, SR20} to the planar random cluster model with cluster weight $1\le q\le 4$. Along the way, we provide a complete proof of the strong arm separation lemma for the random cluster model.
\end{abstract}

\section{Introduction}

\subsection{Random cluster model and chemical distance}

The random cluster model is a well-known dependent percolation model that generalizes classical models such as the Bernoulli percolation and Ising and Potts models. It was first introduced by Fortuin and Kasteleyn in 1972 \cite{FK72}. Let $G = (V,E)$ be a graph and $\omega$ be a percolation configuration on $G$ where each edge is colored open or closed. The random cluster measure is tuned by two parameters $p\in (0,1)$, the edge weight, and $q>0$, the cluster weight. Then, the random cluster measure is proportional to
\begin{equation*}
   p^{\text{\# open edges}} (1-p)^{\text{\# closed edges}} q^{\text{\# open clusters}}.
\end{equation*}
One can immediately see that when $q = 1$, the random cluster measure coincides with the Bernoulli percolation measure. For integer $q\ge 2$, the random cluster model corresponds to Ising and Potts models through the Edwards-Sokal coupling \cite{ES88}.

The random cluster model undergoes a phase transition at $p_c(q) = \sqrt{q}/(1+\sqrt{q})$ in the sense that for $p>p_c$, the probability the origin is contained in an infinite open cluster is positive while the probability is $0$ for $p< p_c$ \cite{BD12}. Moreover, 
the phase transition is continuous (i.e. the probability the origin is contained in an infinite open cluster is $0$ at criticality) for $q \in [1,4]$ \cite{DST17} and discontinuous for $q>4$ \cite{DGHMT16, RS20}.

The object of interest in this article is the \emph{chemical distance}. For two subsets $A$ and $B$ of $V$ and a percolation configuration viewed as a subgraph of $G$, the chemical distance is the graph distance between $A$ and $B$. Specifically, consider the box $B(n) = [-n,n]^2$ on $\Z^2$ and let $\mathcal H_n$ be the event that there exists a horizontal open crossing between the left and right sides of the box, we denote by $S_n$ the length of the shortest such crossing which we call \emph{the} chemical distance from now on.

Chemical distances on planar Bernoulli percolation have received attention from physicists and mathematicians alike. In both the subcritical and supercritical regimes, the chemical distance is known to behave linearly \cite{GM90,AP96}. In the critical phase, while physics literature \cite{EK85, Grassberger99, HN84, HTWB85, HHS84, HS88} generally assumes the existence of an exponent $s>0$ such that
\begin{equation*}
    E[S_n\mid \mathcal H_n] \sim n^s,
\end{equation*}
there is no widely accepted conjecture on the value of $s$, nor is there a precise interpretation of ``$\sim$''. 

The present known lower bound can be derived from the work of Aizenman and Burchard \cite{AB99}: there is $\eta > 0$ such that, with high probability
\begin{equation*}
    E[S_n \mid \mathcal H_n] \ge n^{1+\eta}.
\end{equation*}
This bound applies to a general family of random curves satisfying certain conditions. This includes shortest open connections in the random cluster model. We remark further on this lower bound in Section \ref{subsec:lower-bound}.

In \cite{KZ93}, Kesten and Zhang note that the shortest horizontal open crossing can be compared to the \emph{lowest crossing} $\ell_n$, which, by combinatorial arguments, consists of ``\emph{three-arm points}'' and has expected size
\begin{equation}\label{eq:AB-lower-bd}
    E[\# \ell_n \mid \mathcal H_n] \le Cn^2 P(A_3(n)),
\end{equation}
see \cite{MZ05}. Here $A_3(n)$ is the event that there are two disjoint open and one dual-closed paths from the origin to distance $n$. In \cite{DHS17, DHS21}, Damron, Hanson, and Sosoe improve the upper bound by a factor of $n^{-\delta}$ for some $\delta >0$, thus obtaining the present known upper bound:
\begin{equation}
    \label{eq:bernoulli}
    E[S_n \mid \mathcal H_n] \le Cn^{2-\delta} P(A_3(n)).
\end{equation}

In \cite{SR20}, Sosoe and the author obtain the same upper bound for the \emph{radial} chemical distance, which measures the expected length of the shortest open crossing from the origin to the boundary of the box $B(n)$ conditional on the existence of such a crossing. Although there is no lowest crossing to compare to in the radial case, the construction of a path consisting of three-arm points serves as the foundation for the improvement.

When $q\in [1,4]$, the random cluster model exhibits a continuous phase transition \cite{DST17} as well as enjoys positive association (FKG inequality). These facts combined with recent development in RSW-type quad-crossing probabilities \cite{DMT20} allows us to pursue an upper bound in the form of \eqref{eq:bernoulli}:

\begin{Theorem} \label{thm:main}
    Fix $1\le q\le 4$, $p=p_c(q)$ and let $\E$ denote the expectation with respect to the random cluster measure $\phi_{p_c, q, B(n)}^\xi$. For any boundary condition $\xi$, there is a $\delta>0$ and a constant $C>0$ independent of $n$ such that
    \begin{equation}
        \label{eq:main}
        \E [S_n \mid \mathcal H_n] \le Cn^{2-\delta} \phi_{p_c, q, B(n)}^\xi (A_{3}(n)).
    \end{equation}
\end{Theorem}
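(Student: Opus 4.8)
The plan is to follow the scheme developed for $2d$ Bernoulli percolation in \cite{DHS17, DHS21, SR20} and transplant each step to the dependent setting of the random cluster model. Throughout, write $\phi = \phi_{p_c, q, B(n)}^\xi$. Two features of the range $1 \le q \le 4$ make this possible: the RSW-type (quad-crossing) estimates of \cite{DST17, DMT20}, which are uniform in the boundary condition, and quasi-multiplicativity of arm events, which follows from the strong arm separation lemma established earlier in the paper. Wherever the Bernoulli arguments invoke independence or the BK inequality, I would instead use the FKG inequality, monotonicity of $\phi$ in the boundary condition, and the domain Markov property.

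\emph{Step 1: the lowest crossing and the a priori bound.} On $\mathcal H_n$, let $\ell_n$ be the lowest open horizontal crossing of $B(n)$. A classical combinatorial argument shows that every edge $e \in \ell_n$ is a three-arm point: two disjoint open arms run along $\ell_n$ to the left and right sides of $B(n)$, and one dual-closed arm runs from $e$ down to the bottom side. Summing the corresponding conditional arm probabilities over $e$ and using quasi-multiplicativity together with RSW to carry out the sum gives the random cluster analogue of \eqref{eq:AB-lower-bd},
\begin{equation*}
    \E[\# \ell_n \mid \mathcal H_n] \le C n^2 \, \phi(A_3(n)),
\end{equation*}
uniformly in $\xi$. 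Since $S_n \le \# \ell_n$, this is already \eqref{eq:main} with $\delta = 0$; the content of the theorem is the strict power saving.

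\emph{Step 2: one-scale shortcuts.} Fix a mesoscopic scale $m$ and a box $B$ of side length $m$ at an admissible location inside $B(n)$. Conditioning on the part of $\ell_n$ outside $B$ and on the open and dual-closed arms it forces across $\partial B$, the configuration inside $B$ is a random cluster measure with a random boundary condition; by monotonicity this measure is sandwiched between the free- and wired-boundary measures, so RSW and FKG apply uniformly. The key one-scale statement is: with probability at least some $c>0$ independent of $m$, $n$, and $\xi$, the arc of $\ell_n$ inside $B$ makes a detour whose two endpoints on $\partial B$ are joined by an open path of length at most a fixed fraction $\theta < 1$ of the length of the arc; splicing this path in place of the arc strictly shortens the crossing. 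Equivalently, a positive proportion of the three-arm points carried by $\ell_n$ inside $B$ can be bypassed.

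\emph{Step 3: renormalization over scales.} Iterating Step 2 along a geometric sequence of scales $m_0 < m_1 < \dots < m_K$ with $m_K \asymp n$, and organizing the bypasses hierarchically as in \cite{DHS21, SR20}, each of the $K \asymp \log n$ scales contributes a multiplicative factor $1-c$ to the expected number of surviving three-arm points. The multiplicativity across scales, which in the product case is a consequence of independence, here follows from the domain Markov property together with the uniformity in $\xi$ of the one-scale estimate. This yields an overall gain $(1-c)^{c' K} \le n^{-\delta}$ for some $\delta = \delta(q) > 0$, hence
\begin{equation*}
    \E[S_n \mid \mathcal H_n] \le C n^{2-\delta} \, \phi(A_3(n)).
\end{equation*}

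\emph{Main obstacle.} The geometric shortcut construction is essentially that of \cite{DHS21, SR20}; the real difficulty lies in the percolation-theoretic inputs for a dependent model. The crux is establishing quasi-multiplicativity of polychromatic arm events through the strong arm separation lemma, and then running both the one-scale and the multi-scale arguments using only FKG, monotonicity, and the domain Markov property. The single most delicate point is controlling the conditional law of the configuration inside a box given the arms entering it --- a random boundary condition trapped between free and wired --- and checking that the shortcut probability remains bounded below uniformly in the scale, in $n$, and in $\xi$.
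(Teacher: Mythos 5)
Your high-level plan (lowest crossing, shortcuts at many scales, compounding the gains) is the right skeleton and matches the paper's outline in Section \ref{sec:outline}, but two of your steps would not close as written. First, the renormalization in your Step 3 does not produce a power saving. If each shortcut merely replaces an arc of $\ell_n$ by an open path whose length is a fixed fraction $\theta<1$ of the arc, then summing over a maximal disjoint family of shortcuts yields
$\E[S_n\mid\mathcal H_n]\le \theta\,\E[\#\ell_n\mid\mathcal H_n]+(\text{uncovered edges})$, i.e.\ only a constant-factor gain; and you cannot simply iterate ``shortcut the shortcut'' across scales, because after splicing, the new path is no longer the lowest crossing and its edges are no longer three-arm points, so the one-scale estimate does not reapply to it. The actual mechanism separates two roles that your sketch conflates: the savings ratio of a \emph{single} shortcut must itself be driven down to $n^{-c}$ by the iteration inside the U-shaped region (Step 2 of Section \ref{sec:outline}, quoting \cite[Section 7]{DHS21}), while the multiscale structure is used only to guarantee that a shortcut exists at \emph{some} scale with probability $1-n^{-\theta}$ conditionally on $e\in\ell_n$ (Proposition \ref{thm:no-shortcut}); the final bound is then $n^{-\theta}\E[\#\ell_n]+n^{-c}\E[\#\ell_n]$.

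Second, and more seriously for the random cluster model, your proposed tool for the conditional law inside a box --- sandwiching the random boundary condition between free and wired by monotonicity --- does not work here, because the shortcut events and the conditioning event $A_3$ involve both open and dual-closed arms and are therefore not monotone; monotone sandwiching gives no two-sided control of their probabilities. The decoupling of the scale-$k$ shortcut event from the configuration outside $\mathrm{Ann}(2^{10kN},2^{10(k+1)N})$, conditionally on $A_3(2^{L})$, is the real content of the extension and occupies Sections \ref{sec:cond-decor} and \ref{sec:ext-sep}: one conditions on innermost open and dual-closed circuits with defect edges, so that the domain Markov property induces an essentially deterministic boundary condition on the exterior of the circuit, and one then compares arm events emanating from different circuits via the strong arm separation lemmas, yielding the key estimate \eqref{eq:dam-sap-comp}. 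Without this (or an equivalent substitute), the large-deviation argument over scales (Theorem \ref{thm:large-dev}) that underlies the bound $n^{-\theta}$ on the no-shortcut probability has no justification in the dependent setting; ``the domain Markov property together with uniformity in $\xi$'' is not enough, since the boundary condition induced on an annulus by the outside configuration is random and correlated with $A_3(2^{L})$. A minor further point: quasi-multiplicativity is taken as an input from \cite{DMT20}, not derived from the paper's separation lemma, whose purpose is the circuit-to-circuit comparison just described.
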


\subsection{Organization}

In Section \ref{subsec:notation}, we summarize the notations we use in this paper. In Section \ref{subsec:properties}, we list a few results and tools for the random cluster model we utilize. 

One of the persistent tools used in all of the above constructions is the so-called ``gluing construction''. In 2d critical Bernoulli percolation, this classical construction is realized by RSW estimates and generalized FKG inequality. The later is not known for the random cluster model. Therefore, we provide a detailed alternate argument for gluing constructions in Section \ref{sec:gluing}.

The general strategy to prove Theorem \ref{thm:main} aligns with \cite{DHS21}, which we outline in Section \ref{sec:outline} to provide context. We aim to point to the similarities and highlight the differences between the proofs for the two models to ensure readability while minimizing the amount of repetition. In Section \ref{sec:cond-decor} and \ref{sec:ext-sep}, we provide the proofs of a large deviation bound conditional on a three-arm event and the random-cluster analogue of the strong arm separation lemma. Both proofs involve strategic applications of the domain Markov property to circumvent the lack of independence.

We can extend the result of the main theorem to the radial chemical distance, following the approach in \cite{SR20} for Bernoulli percolation. Since most of the arguments in \cite{SR20} rely solely on independence and gluing constructions, they extend to the random cluster model when substituted with the domain Markov property and gluing constructions detailed in Section \ref{sec:gluing}. The remaining challenge is to find a way to bound the probability of a specific event without the use of Reimer's inequality. Such a method will be detailed in Section \ref{sec:no-reimer}.

\subsection{Notations} \label{subsec:notation}

In this paper, we consider the random cluster model on the square lattice $(\Z^2, \mathcal E)$, that is a graph with vertex set $\Z^2$ and edge set $\mathcal E$ consisting of edges between all pairs of nearest-neighbor vertices. We often work with the random cluster model on a discrete subdomain of $\Z^2$. A finite subdomain $\mathcal D = (V, E)$ is defined by the (finite) edge set $E$ and the vertex set $V$ of all endpoints of the edges in $E$. Its boundary $\partial \mathcal D$ consists of the vertices in the topological boundary of $\mathcal D$.

A percolation configuration $\omega = (\omega_e)_{e\in E}$ on a domain $\mathcal D = (V,E)$ is an element in the state space $\Omega = \{0,1\}^E$ which assigns a status to each edge $e\in E$. An edge $e$ is said to be open in $\omega$ if $\omega_e = 1$ and closed otherwise.

A boundary condition $\xi$ on $\mathcal D$ is a partition of $\partial \mathcal D$. All vertices in the same class of the partition are \emph{wired together} and count towards the same connected component when defining the probability measure. In the \emph{free} boundary condition, denoted by $0$ in the superscript, no two vertices on the boundary are identified with each other.

\begin{Definition}
    Let $\mathcal D = (V,E)$ be a subdomain of $\Z^2$. For an edge weight parameter $p\in [0,1]$ and a cluster weight parameter $q>0$, the random cluster measure on $\mathcal D$ with boundary condition $\xi$ is defined by
    \begin{equation*}
        \phi_{p, q, \mathcal D}^\xi(\omega) = \frac1{Z_{p, q, \mathcal D}^\xi} p^{o(\omega)}(1-p)^{c(\omega)} q^{k(\omega^\xi)} 
    \end{equation*}
    where $o(\omega)$ is the number of open edges in $\omega$, $c(\omega)= |E|-o(\omega)$ is the number of closed edges in $\omega$, $k(\omega^\xi)$ is the number of connected components of $\omega$ with consideration of the boundary condition $\xi$, and the partition function is defined by
    \begin{equation*}
        Z_{p, q, \mathcal D}^\xi = \sum_{\omega \in \{0,1\}^E}  p^{o(\omega)}(1-p)^{c(\omega)} q^{k(\omega^\xi)}.
    \end{equation*}
\end{Definition}

For the rest of this paper, we fix the cluster weight $q\in [1,4]$ and edge weight $p = p_c(q) = \sqrt{q}/(1+\sqrt{q}) $ and we drop them from the notation. 

\subsubsection{Arm events and arm exponents}

To set up for the so-called arm events, we first introduce a duality. The dual lattice of the square lattice is written as $((\Z^2)^*, \mathcal E^*)$ where $(\Z^2)^* = \Z^2+(1/2,1/2)$ and $\mathcal E^*$ its nearest-neighbor edges. For each edge $e \in \mathcal E$, $e^*\in \mathcal E^*$ is the dual edge that shares the same midpoint as $e$. Given $\omega \in \Omega$, we obtain $\omega^* \in \Omega^* = \{0,1\}^{\mathcal E^*}$ by the relation $\omega_e = \omega^*_{e^*}$. The dual measure is of the form
\begin{equation}
	\phi_{p^*, q, \mathcal D^*}^\xi(\omega^*) \propto (p^*)^{o(\omega^*)} (1-p^*)^{c(\omega^*)} q^{k((\omega^*)^\xi)}
\end{equation} 
where the dual parameter $p^*$ satisfies
\begin{align*}
	\frac{p^*p}{(1-p^*)(1-p)} = q.
\end{align*}

A path (on either the primal or dual lattice) is a sequence $(v_0, e_1, v_1, \dots, v_{N-1}, e_N, v_N)$ such that for all $k = 1,...,N$,$\|v_{k-1}-v_k\|_1=1$ and $e_k =\{v_{k-1},v_k\}$. A circuit is a path with $v_0=v_N$. If $e_k\in \mathcal E$ for all $k=1,\dots, N$ and $\omega(e_k)=1$, we say $\gamma= (e_k)_{k=1,\dots, N}$ is open; if $e_k\in \mathcal E^*$ for all $k=1,\dots, N$ and $\omega(e_k)=0$, we say $\gamma$ is a dual-closed path.

We write $B(n)$ for the domain induced by the edges in $[-n,n]^2$ and $B(x, n)$ its translation by $x\in \Z^2$. For $n_1\le n_2$, we denote annuli centered at some vertex $x$ by 
\begin{equation*}
    \mathrm{Ann}(x; n_1,n_2) = B(x, n_2) \setminus B(x, n_1).
\end{equation*}
If the annulus is centered at the origin, we drop the $x$ from the notation and instead write $\mathrm{Ann}(n_1,n_2)$.

A path of consecutive open or dual-closed edges is called an arm. A color sequence $\sigma$ of length $k$ is a sequence $(\sigma_1,\dots,\sigma_k) \in \{O,C\}^k$. Each $\sigma_i$ indicates a ``color'', with $O$ representing open and $C$ representing dual-closed. For $n_1 \le n_2$ and a vertex $x$, we define a \emph{$k$-arm event} with color sequence $\sigma$ to be the event that there are $k$ disjoint paths whose colors are specified by $\sigma$ in the annulus $\mathrm{Ann}(x; n_1,n_2)$ connecting $\partial B(x, n_1)$ to $\partial B(x, n_2)$. Formally, 
\begin{align*}
	A_{k,\sigma}(x; n_1,n_2) := \left\{ \partial B(x, n_1) \xleftrightarrow[\sigma]{\mathrm{Ann}(x; n_1,n_2)} \partial B(x,n_2)\right\}.
\end{align*}
We write $A \xleftrightarrow[\sigma_1]{\mathcal D} B$ to denote that vertex sets $A$ and $B$ are connected through a path of color $\sigma_1$ in the domain $\mathcal D$. For $A_{k,\sigma}(x; n_1,n_2)$ to occur, we let $n_0(k)$ be the smallest integer such that $|\partial B(n_0(k))| \ge k$ and let $n_1\ge n_0(k)$. Color sequences that are equivalent up to cyclic order denote the same arm event.

For this paper, there are a few special arm events: Let us fix $n$ and the boundary condition $\xi$ throughout the paper unless otherwise stated. Let $0\le n_1 < n_2 \le n$.
\begin{description}
    \item[The three-arm event] We denote by $\pi_3(n_1,n_2)$ the probability for the three-arm event $A_3(n_1,n_2) = A_{3, OOC}(n_1,n_2)$ that there are two open arms and one dual-closed arm in the annulus $\mathrm{Ann}(n_1, n_2)$:
    \begin{equation*}
        \pi_3(n_1,n_2) := \phi_{B(n)}^\xi (A_3(n_1,n_2)).
    \end{equation*}

    \item[The alternating five-arm event] There exists $c,C>0$ such that
    \begin{equation*}
        c(n_1/n_2)^2 \le \phi_{B(n)}^\xi (A_{5,OCOCO}(n_1,n_2)) \le C(n_1/n_2)^2.
    \end{equation*}
    Thus, the alternating five-arm event is said to have the universal arm exponent $2$. For proof, see \cite[Proposition 6.6]{DMT20}.
\end{description}
We remark that the dependencies on the cluster weight $q$ are implicit in the notations above.

Constants denoted by $C, c$, quantities denoted by $\alpha, \delta, \epsilon$, and boundary conditions denoted by $\eta, \iota$ are not necessarily consistent throughout the paper and their values may defer from line to line.

\subsection{Properties of the random cluster model} \label{subsec:properties}

Elaboration on the following properties can be found in \cite{Grimmett06} and \cite{DC17} or as cited.

\subsubsection*{Domain Markov property}
For any configuration $\omega' \in \{0, 1\}^E$ and any subdomain $\mathcal{F} = (W, F)$ with $F \subset E$,
\begin{equation*}
	\phi_{\mathcal{D}}^\xi (\cdot_{|F} \mid \omega_e = \omega_e',\forall e\not\in F) =\phi_{\mathcal{F}}^{\xi'} (\cdot)
\end{equation*}
where the boundary conditions $\xi'$ on $\mathcal{F}$ are defined as follows: $x$ and $y$ on $\partial\mathcal{F}$ are wired if they are connected in $\omega^\xi_{|E \setminus F}$.

\subsubsection*{Quad-crossing RSW}

\cite[Theorem 1.2]{DMT20}  Fix $1 \le q < 4$ and $p = p_c(q)$. For every $M > 0$, there exists $\eta = \eta(M) \in (0,1)$ such that for any discrete quad $(\mathcal D,a,b,c,d)$ and any boundary conditions $\xi$, if the extremal distance $\ell_{\mathcal D}[(ab),(cd)] \in [M^{-1}, M]$, then
\begin{equation} \label{eq:quad-rsw}
	\eta \le \phi^{\xi}_{\mathcal D}[(ab) \overset{\mathcal D}{\leftrightarrow}(cd)] \le 1-\eta.
\end{equation}

\subsubsection*{FKG inequality}
Fix $q \geq 1$ and a domain $\mathcal{D} = (V, E)$ of $\Z^2$. An event $A$ is called increasing if for any $\omega \le \omega'$ (for the partial order on $\{0, 1\}^E$), $\omega \in A$ implies that $\omega' \in A$. For every increasing events $A$ and $B$,
\begin{equation*}
    \phi_{\mathcal{D}}^\xi (A\cap B) \geq \phi_{\mathcal{D}}^\xi(A) \phi_{\mathcal{D}}^\xi(B).
\end{equation*}

We remark that there is no known proof for the equivalent of the generalized FKG inequality for the random cluster model.

\subsubsection*{Quasi-multiplicativity}
\cite[Proposition 6.3]{DMT20} Fix $1 \le q < 4$ and $\sigma$. There exist $c = c(\sigma,q)>0$ and $C =C(\sigma,q)>0$ such that for any boundary condition $\xi$ and every $n_0(k)\le n_1\le n_3 \le n_2$,
\begin{equation*}
    c \phi_{\mathcal D}^\xi (A_\sigma (n_1, n_2)) \le \phi_{\mathcal D}^\xi (A_\sigma (n_1, n_3)) \phi_{\mathcal D}^\xi (A_\sigma (n_3, n_2)) \le C \phi_{\mathcal D}^\xi (A_\sigma (n_1, n_2)).
\end{equation*}

\subsubsection*{Lack of Reimer's inequality}

Despite being a classical tool for Bernoulli percolation, the van den Berg-Kesten/Reimer's inequality is not known in the general form for the random cluster model, nor do we expect it to be true. A weak form of Reimer's inequality for the random cluster model is shown in \cite{BG13}. This is an issue which will be discussed in Section \ref{sec:no-reimer}.

\subsection{Lower bound for the random cluster model} \label{subsec:lower-bound}
The Aizenman-Burchard lower bound \eqref{eq:AB-lower-bd} applies when the following criterion on the probability of simultaneous traversals of separated rectangles is satisfied: A collection of rectangles ${R_j}$ is called \emph{well-separated} when the distance between any two rectangles is at least as large as the diameter of the larger. The following criterion is formulated for the random cluster measure.
\begin{Hyp}[\cite{AB99}]
	Fix $\delta>0$. There exist $\sigma>0$ and some $\rho <1$ with which for every collection of $k$ well-separated rectangles, $A_1, \dots, A_k$, of aspect ratio $\sigma$ and lengths $\ell_1, \dots, \ell_k \ge \delta n$,
	\begin{align*}
		\phi_{B(n)}^\xi 
		\begin{pmatrix}
			A_1, \dots, A_k \text{ are traversed (in} \\
			\text{the long direction) by segments} \\
			\text{of an open crossing}
		\end{pmatrix}
		\le C \rho^k.
	\end{align*}
\end{Hyp}
This hypothesis is satisfied as a consequence of the weak (polynomial) mixing property \cite{DC13}: There exists $\alpha>0$ such that for any $2\ell<m<n$ and any event $A$ depending only on edges in $B(\ell)$ and event $B$ depending only on the edges in $\mathrm{Ann}(m,n)$,
\begin{align*}
	|\phi_{B(n)}^\xi (A\cap B)  - \phi_{B(n)}^\xi (A) \phi_{B(n)}^\xi (B)| \le \left(\frac{\ell}{m}\right)^\alpha \phi_{B(n)}^\xi (A) \phi_{B(n)}^\xi (B),
\end{align*}
uniform in the boundary condition $\xi$.

\subsection{Acknowledgment}
We thank Reza Gheissari for a conversation that inspired this project. And we thank Philippe Sosoe for the many helpful discussions and detailed comments on a draft.

\section{Gluing Construction Without Generalized FKG Inequality} \label{sec:gluing}

This section is dedicated to carefully examining the gluing construction for the random cluster model. The notations used in this section are independent of the rest of the paper.

Fix $\delta>0$, a positive integer $k$, and $n_1 < n_2 < n_3$ sufficiently large. Let $E(n_1, n_2)$ be the event such that:
\begin{enumerate}
	\item there exist vertices $x_i \in \partial B(n_2)$ for $i=1,\dots, k$ and $\min_{i\neq j} |x_i- x_j| \ge 10\delta n_2$ such that $x_i$ is connected to $\partial B(x_i, 2\delta n_2) \cap B(n_2)^c$ by a path of color $\sigma_i$;
	\item $E(n_1, n_2)$ depends only on the status of the edges in $\mathrm{Ann}(n_1, n_2) \cup (\cup_{i=1}^k B(x_i, 2\delta n_2))$.
\end{enumerate}
Similarly, let $F(2n_2, n_3)$ be the event such that:
\begin{enumerate}
	\item there exist vertices $y_i \in \partial B(2n_2)$ for $i=1,\dots, k$ and $\min_{i\neq j} |y_i- y_j| \ge 20\delta n_2$ such that $y_i$ is connected to $\partial B(y_i, 2\delta n_2) \cap B(2n_2)$ by a path of color $\sigma_i$;
	\item $F(2n_2, n_3)$ depends only on the status of the edges in $\mathrm{Ann}(2n_2, n_3) \cup (\cup_{i=1}^k B(y_i, 2\delta n_2))$.
\end{enumerate}

\begin{Prop} \label{prop:gluing}
    Let $E(n_1,n_2)$, $F(2n_2, n_3)$ be as the above. Then there exists $c>0$ depending only on $k$, such that
    \begin{equation} \label{eq:gluing}
    	\begin{split}
    		\phi_{\mathrm{Ann}(n_1,n_3)}^\xi &\left(E(n_1,n_2)\cap F(2n_2,n_3)\cap \bigcap_{i=1}^k \{x_i\xleftrightarrow[\sigma_i]{\mathrm{Ann}(n_2, 2n_2)} y_i\}\right) \\
    		&\ge c\phi_{\mathrm{Ann}(n_1,n_3)}^\xi  (E(n_1,n_2)\cap F(2n_2,n_3)).
    	\end{split}
    \end{equation}
\end{Prop}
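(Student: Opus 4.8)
The plan is to condition on everything outside the "bridge" annulus $\mathrm{Ann}(n_2, 2n_2)$ and then show that, uniformly over the conditioning, the event that each $x_i$ is $\sigma_i$-connected to the corresponding $y_i$ inside $\mathrm{Ann}(n_2, 2n_2)$ has probability bounded below by a constant depending only on $k$. Concretely, let $\mathcal G$ be the $\sigma$-algebra generated by the edges in $\mathrm{Ann}(n_1,n_2)\cup(\cup_i B(x_i,2\delta n_2))$ and in $\mathrm{Ann}(2n_2,n_3)\cup(\cup_i B(y_i,2\delta n_2))$ — i.e. everything the events $E$ and $F$ depend on. On the event $E(n_1,n_2)\cap F(2n_2,n_3)$ the locations $x_i,y_i$ and the colors $\sigma_i$ are determined, together with the "stubs" connecting $x_i$ to $\partial B(x_i,2\delta n_2)$ and $y_i$ to $\partial B(y_i,2\delta n_2)$. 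By the domain Markov property, conditionally on $\mathcal G$ the law of the configuration on the remaining edges (those strictly inside the bridge region not already exposed) is a random cluster measure on that subdomain with some boundary condition $\xi'$ induced by the exposed configuration. Thus it suffices to produce, for every such $\xi'$, a fixed geometric configuration of $k$ disjoint paths of the prescribed colors realizing all the connections $x_i \xleftrightarrow[\sigma_i]{} y_i$, and to lower bound its conditional probability by a constant.

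The geometric input is a standard "landing-sequence" or "fence" construction. Since the separation hypotheses guarantee $\min_{i\ne j}|x_i-x_j|\ge 10\delta n_2$ on $\partial B(n_2)$ and $\min_{i\ne j}|y_i-y_j|\ge 20\delta n_2$ on $\partial B(2n_2)$, one can choose $k$ pairwise-disjoint "corridors" — thin sub-rectangles (or unions of $O(1)$ boxes) of bounded aspect ratio — inside $\mathrm{Ann}(n_2,2n_2)$, the $i$-th corridor joining the ball $B(x_i,2\delta n_2)$ to the ball $B(y_i,2\delta n_2)$, all corridors mutually disjoint and each of diameter comparable to $n_2$. Within corridor $i$ one then asks for an open crossing in the long direction (if $\sigma_i=O$) or a dual-closed crossing (if $\sigma_i=C$); concatenating with the stubs already present gives the arm $x_i\xleftrightarrow[\sigma_i]{}y_i$. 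For the colors to match at the interfaces one also includes short "connector" boxes near $\partial B(x_i,2\delta n_2)$ and $\partial B(y_i,2\delta n_2)$ where the stub endpoints are joined to the corridor crossing; these are again $O(1)$-aspect-ratio quads. Each such crossing has conditional probability bounded below by a constant uniformly in the boundary condition by the quad-crossing RSW estimate \eqref{eq:quad-rsw} (applied to the primal model for $O$-corridors and, by duality, to the dual model with parameter $p^*$ for $C$-corridors), and the FKG inequality lets us intersect the $O(k)$ crossing events to get a lower bound $c(k)>0$ for the whole prescribed configuration. Finally, integrating this conditional bound over $E\cap F$ against $\phi^\xi_{\mathrm{Ann}(n_1,n_3)}$ yields \eqref{eq:gluing}.

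The main obstacle — and the reason this needs "careful examination" rather than a one-line citation — is the absence of the generalized (multi-color) FKG inequality for the random cluster model. The ordinary FKG inequality of Section \ref{subsec:properties} only compares increasing events with increasing events, whereas here the bridge configuration mixes open connections (increasing) and dual-closed connections (decreasing), so we cannot simply take the product over all $k$ corridors at once. The fix is to make the corridors genuinely spatially disjoint and to expose them one at a time: reveal the configuration in corridor $1$, apply RSW/domain Markov to get its crossing with probability $\ge c$ uniformly in the (arbitrary) boundary data seen so far, then move to corridor $2$ using the domain Markov property to re-freeze corridor $1$ as boundary condition, and so on. Because at each step we only need a single-color crossing of a single quad — for which RSW gives a bound uniform over all boundary conditions — monotonicity within one color class is all that is required, and the cross-color interaction is handled entirely by the uniformity in \eqref{eq:quad-rsw} rather than by any correlation inequality. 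Some care is needed to check that each newly exposed corridor, together with the already-revealed stubs and corridors, still forms a quad with extremal distance bounded away from $0$ and $\infty$, so that the RSW hypothesis applies; this is where the explicit separation constants $10\delta, 20\delta, 2\delta$ in the definitions of $E$ and $F$ are used.
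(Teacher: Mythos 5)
Your proposal is correct and follows essentially the same route as the paper: disjoint corridors of bounded aspect ratio joining $B(x_i,2\delta n_2)$ to $B(y_i,2\delta n_2)$, sequential exposure of one corridor at a time via the tower rule and domain Markov property (to avoid the missing generalized FKG), single-color FKG plus uniform-in-boundary-condition RSW to glue the stubs to the corridor crossing with constant cost, and iteration over $i=1,\dots,k$. The only cosmetic difference is that the paper realizes your ``connector boxes'' as half-circuits in half-annuli around $x_i$ and $y_i$, which is the same idea.
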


\begin{proof}
	Conditional on $E(n_1,n_2)\cap F(2n_2,n_3)$, we construct a set of $k$ corridors, $T_1, \dots, T_k$, each connecting $B(x_i, 2\delta n_2)\cap B(n_2)^c$ to $B(y_i, 2\delta n_2)\cap B(2n_2)$. Let $\gamma_1, \dots, \gamma_k$ be a collection of (topological) paths that satisfy the following constraints:
	\begin{itemize}
		\item $\gamma_i$ is a path in $\mathrm{Ann}(n_2, 2n_2)$ from $x_i$ to $y_i$.
		\item The distance between any two $\gamma_i, \gamma_j$ is at least $10\delta n_2$.
		\item The length of each $\gamma_i$ is at most $C n_2$ for some constant $C$.
	\end{itemize}
	Then, we let $T_i$ be the $\delta n_2$ neighborhood of $\gamma_i$ intersected with $\mathrm{Ann}(n_2, 2n_2)$. The $T_i$'s are disjoint by construction. We show \eqref{eq:gluing} by first dividing the right-hand side on both sides, converting the left-hand side into a conditional probability, and noting that
	\begin{align*}
		\phi_{\mathrm{Ann}(n_1,n_3)}^\xi &\left(\cap_{i=1}^k \{x_i\xleftrightarrow[\sigma_i]{\mathrm{Ann}(n_2, 2n_2)} y_i\}\: \bigg \vert \: E(n_1,n_2)\cap F(2n_2,n_3)\right) \\
		\ge &\phi_{\mathrm{Ann}(n_1,n_3)}^\xi \left(\cap_{i=1}^k \{x_i \xleftrightarrow[\sigma_i]{T_i} y_i\} \: \bigg \vert \: E(n_1,n_2)\cap F(2n_2,n_3)\right).
	\end{align*}
	It suffices to provide a constant lower bound for the right-hand side. We first use the tower rule for conditional expectations to isolate the occurrence of $\{x_1\xleftrightarrow[\sigma_1]{T_1} y_1\}$.
	\begin{align}
		\phi_{\mathrm{Ann}(n_1,n_3)}^\xi &\left(\cap_{i=1}^k \{x_i \xleftrightarrow[\sigma_i]{T_i} y_i\} \: \bigg \vert \: E(n_1,n_2)\cap F(2n_2,n_3) \right) \label{eq:cond-gluing}\\
		= \EE \bigg[\EE &\left[\mathbf{1} \left\{\cap_{i=1}^k\{x_i \xleftrightarrow[\sigma_i]{T_i} y_i\}\right\} \: \bigg \vert \: \omega |_{T_1^c}, E(n_1,n_2)\cap F(2n_2,n_3)\right ]  \: \bigg \vert \: E(n_1,n_2)\cap F(2n_2,n_3)\bigg].
	\end{align}
	Here $\EE$ denotes the expectation with respect to the measure $\phi_{\mathrm{Ann}(n_1, n_3)}^\xi$. Since $\cap_{i=2}^k \{ x_i \xleftarrow[\sigma_i]{T_i} y_i\}$ is $\omega |_{T_1^c}$-measurable, the right-hand side can be rewritten as 
	\begin{equation}\label{eq:tower-property} 
		\EE\bigg[ \EE \left[ \mathbf{1} \{x_1\xleftrightarrow[\sigma_1]{T_1} y_1\} \: \bigg \vert \: \omega |_{T_1^c}, E(n_1,n_2)\cap F(2n_2,n_3)\right]  \cdot \mathbf{1} \left\{\cap_{i=2}^k \{x_i\xleftrightarrow[\sigma_i]{T_i} y_i\}\right\} \: \bigg \vert \: E(n_1,n_2)\cap F(2n_2,n_3)\bigg].
	\end{equation}
	We write the inner conditional expectation in \eqref{eq:tower-property} back in conditional probability form as $\phi_{\mathrm{Ann}(n_1, n_3)}^\xi \left(x_1\xleftrightarrow[\sigma_1]{T_1} y_1 \: \bigg \vert \: \omega |_{T_1^c}, E(n_1,n_2)\cap F(2n_2,n_3)\right)$. Note that $\{x_1\xleftrightarrow[\sigma_1]{T_1} y_1\}$ occurs if the following events occur simultaneously:
	\begin{itemize}
		\item $\{x_1 \xleftrightarrow[\sigma_1]{B(2\delta n_2)(x_1) \cap B(n_2)^c} \partial B(x_1, 2\delta n_2) \cap B(n_2)^c\}$;
		\item $\{y_1 \xleftrightarrow[\sigma_1]{B(2\delta n_2)(y_1) \cap B(2n_2)} \partial B(y_1, 2\delta n_2) \cap B(2n_2)\}$;
		\item there is a $\sigma_1$-path in $T_1$ connecting the two short sides of $T_1$;
		\item there is a half $\sigma_1$-circuit enclosing $x_1$ in the half annulus $\mathrm{Ann}(x_1; \delta n_2, 2\delta n_2) \cap B(n_2)^c$, the event of which we denote by $\mathcal C_1$; and
		\item there is a half $\sigma_1$-circuit enclosing $y_1$ in the half annulus $\mathrm{Ann}(y_1; \delta n_2, 2\delta n_2) \cap B(2n_2)$, the event of which we denote by $\mathcal C_2$,
	\end{itemize}
	see Figure \ref{fig:gluing}.
	\begin{figure}
		\centering
		\begin{subfigure}[tb]{.5\textwidth}
			\centering\includegraphics[width=.7\linewidth]{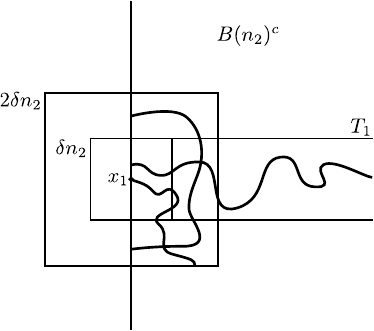}
		\end{subfigure}%
		\begin{subfigure}[tb]{.5\textwidth}
			\centering
			\includegraphics[width=.7\linewidth]{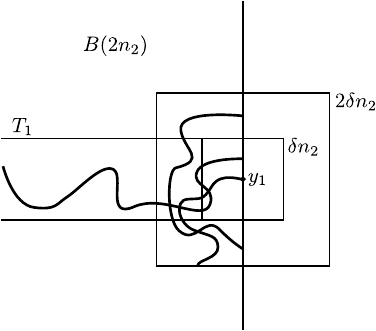}
		\end{subfigure}
		\caption{Constructions near the endpoints $x_1, x_2$.}
		\label{fig:gluing}
	\end{figure}
	Since all these events are $\sigma_1$ connection events, FKG inequality applies. To simplify notation, we denote by $\varphi(\cdot)$ the conditional measure $\phi_{\mathrm{Ann}(n_1, n_3)}^\xi(\cdot \mid \omega |_{T_1^c}, E(n_1,n_2)\cap F(2n_2,n_3))$. Then,
	\begin{align*}
		\varphi  \left(x_1\xleftrightarrow[\sigma_1]{T_1} y_1\right)
		\ge &\varphi \left( \partial B(n_2) \cap T_1 \xleftrightarrow[\sigma_1]{T_1} \partial B(2n_2) \cap T_1 \right) 
		\varphi (\mathcal C_1) 
		\varphi (\mathcal C_2) \\
		& \cdot \varphi \left(x_1 \xleftrightarrow[\sigma_1]{B(x_1, \delta n_2) \cap B(n_2)^c} \partial B(x_1, \delta n_2) \cap B(n_2)^c \right)  \\
		& \cdot \varphi \left( y_1 \xleftrightarrow[\sigma_1]{B(y_1, 2\delta n_2) \cap B(2n_2)} \partial B(y_1, 2\delta n_2) \cap B(2n_2) \right).
	\end{align*}
	The two probabilities on the last two lines are both $1$ because the occurrence of the events is guaranteed by the conditioning on $E(n_1, n_2)$ and $F(2n_2, n_3)$. The cost of half circuits is constant by RSW inequality. Since the width and length of the corridor $T_1$ is of constant proportion, again by RSW inequality, the probability of having a $\sigma_1$-path connecting the two ends of the corridor is also constant. Therefore,
	\begin{equation*}
		\phi_{\mathrm{Ann}(n_1, n_3)}^\xi \left(x_1\xleftrightarrow[\sigma_1]{T_1} y_1 \: \bigg \vert \: \omega |_{T_1^c}, E(n_1,n_2)\cap F(2n_2,n_3)\right) \ge c.
	\end{equation*}
	Plugging this back into \eqref{eq:tower-property}, we have
	\begin{align*}
		\eqref{eq:cond-gluing} 
		&\ge \EE\bigg[ c \mathbf{1} \left\{\cap_{i=2}^k \{x_i\xleftrightarrow[\sigma_i]{T_i} y_i\}\right\} \: \bigg \vert \: E(n_1,n_2)\cap F(2n_2,n_3)\bigg] \\
		&= c \EE\bigg[\mathbf{1} \left\{\cap_{i=2}^k \{x_i\xleftrightarrow[\sigma_i]{T_i} y_i\}\right\} \: \bigg \vert \: E(n_1,n_2)\cap F(2n_2,n_3)\bigg].
	\end{align*}
	Applying the same procedure sequentially to each $i = 2,\dots, k$, and we have a uniform lower bound.
\end{proof}


\section{Large Deviation Bound Conditional on Three Arms} \label{sec:cond-decor}

In this section, we prove Theorem \ref{thm:large-dev}, a large deviation bound conditional on a three-arm event. This is one of the main ingredients in the proof of Theorem \ref{thm:main}, which we outline in Section \ref{sec:outline}, and one that requires significant modification to extend to the random cluster model.

Before stating the theorem, we first introduce some notations and definitions. Fix some integer $N>0$ and for any integer $k\ge 1$, we define $\mathfrak{C}_k$ to be the event that there is a dual-closed circuit in $\mathrm{Ann}(2^{kN}, 2^{(k+1)N})$ with two defect dual-open edges. Similarly, let $\mathfrak{D}_k$ be the event that there is an open circuit in $\mathrm{Ann}(2^{kN}, 2^{(k+1)N})$ with one defect closed edge.

\begin{Definition} \label{def:circuits-k}
	For any $k\ge 1$, we define $\hat{\mathfrak{C}}_k$, the compound circuit event in $\mathrm{Ann}(2^{10kN}, 2^{10(k+1)N})$, as the simultaneous occurrence of the following events:
	\begin{enumerate}
		\item for $i = 1,4,6,9$, $\mathfrak{C}_{10k+i}$ occurs in $\mathrm{Ann}(2^{(10k+i)N}, 2^{(10k+i+1)N})$ and
		\item $\mathfrak{D}_{10k}$ occurs in $\mathrm{Ann}(2^{10kN}, 2^{(10k+1)N})$. 
	\end{enumerate}
\end{Definition}

\begin{Theorem}[\cite{DHS21}, Theorem 4.1] \label{thm:large-dev}
	There exist universal $c_1>0$ and $N_0 \ge 1$ such that for any $N\ge N_0$, any $L', L\ge 0$ satisfying $L-L'\ge 40$, and any event $E_k$ satisfying 
	\begin{enumerate}
		\item[(A)] $E_k$ depends on the status of the edges in $\mathrm{Ann}(2^{kN}, 2^{(k+1)N})$ and 
		\item[(B)] there exists a uniform constant $c_2>0$ such that $\phi_{B(n)}^\xi (E_{10k+5} \cap \hat{\mathfrak{C}}_k \mid A_3(2^{L})) \ge c_2$ for all $n\ge 0$ and $0\le k \le \frac{L}{10N}-1$.
	\end{enumerate}
	Then,
	\begin{align*}
		\phi_{B(n)}^\xi \left( \sum_{k=\lceil\frac{L'}{10N}\rceil}^{\lfloor\frac{L}{10N}\rfloor-1} \mathbf{1}\{E_{10k+5}, \hat{\mathfrak{C}}_k\}\le c_1 c_2\frac{L-L'}{N} \;\Bigg\vert\; A_3(2^{L}) \right) \le \exp(-c_1 c_2\frac{L-L'}{N}).
	\end{align*}
\end{Theorem}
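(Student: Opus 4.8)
The plan is to prove a large-deviation bound for a sum of indicators $\mathbf{1}\{E_{10k+5}, \hat{\mathfrak C}_k\}$ over $k$ in the range $\lceil L'/(10N)\rceil \le k \le \lfloor L/(10N)\rfloor - 1$, conditionally on $A_3(2^L)$. The key obstacle, compared with the Bernoulli case in \cite{DHS21}, is that the events $\{E_{10k+5}, \hat{\mathfrak C}_k\}$ for different $k$ live in disjoint annuli but are \emph{not} independent under the random cluster measure, and the conditioning on $A_3(2^L)$ couples all scales. So the whole argument must be phrased through the domain Markov property rather than through independence, and the three-arm event must be decomposed into pieces that live in the individual annular slabs.

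First I would set up the dyadic scale decomposition: write $m = \lfloor L/(10N)\rfloor - \lceil L'/(10N)\rceil$ for the number of available slabs, and for each admissible $k$ let $X_k = \mathbf{1}\{E_{10k+5}\cap\hat{\mathfrak C}_k\}$. The target is $\phi(\sum_k X_k \le c_1 c_2 m \mid A_3(2^L)) \le e^{-c_1 c_2 m}$. The standard route is an exponential Markov inequality: bound $\E[e^{-t\sum_k X_k}\mid A_3(2^L)]$ from above by something like $\prod$ of factors each bounded by $(1 - c(1-e^{-t}))$, which gives the stretched-geometric decay once $t$ is chosen to be a small constant. To make the product structure work despite dependence, I would reveal the configuration annulus by annulus from the \emph{outside in} (or inside out), using the domain Markov property: after conditioning on the status of all edges outside $B(2^{(10(k+1))N})$ (equivalently outside the $k$-th compound annulus and beyond), the conditional law inside is again a random cluster measure with some boundary condition $\xi'$, and crucially the conditioning on $A_3(2^L)$ becomes a conditioning on a connection event that, given the outside, reduces to requiring three arms to cross the remaining inner region with prescribed connections on the interface. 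Here the compound circuit structure in $\hat{\mathfrak C}_k$ (four defective dual-closed circuits and one defective open circuit in the sub-annuli) is exactly what lets us ``renormalize'' the boundary condition: the circuits decouple the inside from the outside up to the two/one defect edges, so that $\phi(X_k = 1 \mid \text{revealed outside}, A_3(2^L)) \ge c_2$ uniformly in the revealed outside and the boundary condition — this is precisely hypothesis (B), but I need the conditional (revealed-outside) version of it, which I would derive from (B) by another application of the domain Markov property together with the gluing construction of Proposition \ref{prop:gluing} and quasi-multiplicativity to reconnect the three arms across the slab.

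Concretely, the key steps in order: (1) fix $t>0$ a small universal constant to be chosen; (2) order the admissible $k$ values and, processing them one at a time, condition on $\mathcal F_k := \sigma(\omega_e : e \notin \mathrm{Ann}(2^{10kN}, 2^{10(k+1)N}))$ together with $A_3(2^L)$, and show $\E[e^{-tX_k}\mid \mathcal F_k] \le 1 - c_2(1 - e^{-t})$ — this is the heart, using domain Markov to pass to a random cluster measure on the slab with an arbitrary boundary condition, then using the conditional analogue of (B); (3) chain these bounds via the tower property exactly as in the gluing proof above, to get $\E[e^{-t\sum_k X_k}\mid A_3(2^L)] \le (1 - c_2(1-e^{-t}))^{m}$; (4) apply Markov's inequality: $\phi(\sum_k X_k \le \theta m \mid A_3(2^L)) \le e^{t\theta m}(1 - c_2(1-e^{-t}))^{m}$, and optimize — choosing $\theta = c_1 c_2$ for suitably small universal $c_1$ makes the base of the exponential strictly less than $1$, giving decay $e^{-c_1 c_2 m}$, and finally bound $m \ge (L-L')/(10N) - 2 \ge c(L-L')/N$ using $L - L' \ge 40$ and $N \ge N_0$ to absorb the additive loss.

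The step I expect to be the main obstacle is establishing the \emph{conditional} lower bound in (2): that $\phi(E_{10k+5}\cap\hat{\mathfrak C}_k \mid \mathcal F_k, A_3(2^L)) \ge c' c_2$ uniformly over all realizations of the outside configuration (hence over all induced boundary conditions on the slab) and over the shape of $A_3(2^L)$'s trace. The difficulty is that conditioning on $A_3(2^L)$ is conditioning on a positive-probability but globally-defined event, and one must show it does not destroy the uniform lower bound on the slab event. The resolution is that $\hat{\mathfrak C}_k$ contains enough circuits (the $\mathfrak C_{10k+i}$ for $i=1,4,6,9$ and $\mathfrak D_{10k}$) to act as a ``firewall'': given these circuits with their defects, the three-arm connection across the slab factorizes into an inside part, a slab-crossing part, and an outside part, and the slab-crossing part can be produced at constant cost by RSW (quad-crossing RSW, \eqref{eq:quad-rsw}) and the gluing construction, independently of the boundary condition. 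One then invokes (B) to lower bound the unconditioned-on-$\mathcal F_k$ probability and transfers it through a bounded-ratio (Radon–Nikodym) comparison of the two conditional measures, which is legitimate because quasi-multiplicativity controls $\phi(A_3(2^L))$ from above and below by a product over scales with uniformly bounded constants.
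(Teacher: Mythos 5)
Your overall skeleton (exponential Chernoff bound plus a scale-by-scale conditional lower bound chained through a filtration) is the same as the paper's, and you correctly identify the heart of the matter: transferring hypothesis (B) to a version conditioned on the configuration away from the slab. However, your proposed resolution of that step has a genuine gap, and it is exactly the step where all the work of Sections \ref{sec:cond-decor}--\ref{sec:ext-sep} is concentrated.

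The gap is twofold. First, your ``firewall'' argument is circular: you want a uniform lower bound on $\phi(E_{10k+5}\cap\hat{\mathfrak{C}}_k\mid\mathcal F_k, A_3(2^L))$, but the circuits of $\hat{\mathfrak{C}}_k$ that you invoke to decouple the slab from the revealed outside are part of the event whose conditional probability you are bounding, not part of the conditioning. The paper avoids this by a two-stage argument: Proposition \ref{prop:enough-circ} first shows that $\hat{\mathfrak{C}}_k$ occurs at a positive fraction of scales with overwhelming probability given $A_3(2^L)$ (this itself needs Claim \ref{claim:reimer}, i.e.\ the $2^{-\alpha N}$ cost of a fourth arm, and a stochastic domination by independent Bernoullis); only then does it condition on the random set $J_{L',L}=\mathcal J$ of scales carrying circuits, so that in Lemma \ref{lemma:decoupling} the circuits sit in the conditioning and can serve as Markovian interfaces. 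Second, your justification for the ``bounded-ratio (Radon--Nikodym) comparison'' --- that quasi-multiplicativity controls $\phi(A_3(2^L))$ --- does not prove what you need. Quasi-multiplicativity compares arm probabilities across scales; it does not give a uniform comparison of the conditional law on the slab given an \emph{arbitrary} realization of the outside configuration against the unconditioned law, and no strong mixing statement of that kind is available for the random cluster model (only weak polynomial mixing is known). In the paper, the comparison is obtained by applying the domain Markov property at the innermost defected circuits $\mathcal C,\mathcal D$ (whose induced boundary conditions wire only the defect edges, resp.\ are free), and then invoking the ratio estimate \eqref{eq:dam-sap-comp}, which in turn rests on the strong arm separation Lemmas \ref{lemma:ext-arm-sep} and \ref{lemma:int-arm-sep}. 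None of this machinery appears in your outline, so the central inequality $\E[e^{-tX_k}\mid\mathcal F_k, A_3(2^L)]\le 1-c(1-e^{-t})$ is unsupported. (A smaller, fixable issue: the $\sigma$-algebras $\sigma(\omega_e: e\notin\mathrm{Ann}(2^{10kN},2^{10(k+1)N}))$ for different $k$ are not nested, so the tower-property chaining in your step (3) needs the filtration to be set up more carefully, e.g.\ as in the paper where $\mathcal F_r$ records only the occurrence of the previous $E$-events together with $\{J_{L',L}=\mathcal J\}$ and $A_3(2^L)$.)
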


For simplicity of notation, we define $I_{L',L}$ and $J_{L',L}$ to be (random) collections of indices of scales:
\begin{align*}
	I_{L',L} &:= \left\{ k = \lceil\frac{L'}{10N}\rceil, \dots, \lfloor\frac{L}{10N}\rfloor-1 : E_{10k+5} \cap \hat{\mathfrak{C}}_k \text{ occurs}\right\}. \\
	J_{L',L} &:= \left\{ k = \lceil\frac{L'}{10N}\rceil, \dots, \lfloor\frac{L}{10N}\rfloor-1 : \hat{\mathfrak{C}}_k \text{ occurs}\right\}. 
\end{align*}

The estimate in Theorem \ref{thm:large-dev} relies on a two-step strategy: first estimating $\# I_{L',L}$ using the standard Chernoff bound, then expanding the resulting expectation by conditioning on nested filtrations. Since these two steps themselves apply to general random variables, we provide a high-level summary of the proof and only recreate the parts that are sensitive to the model. 

To start, we want to condition on there existing sufficiently many decoupling circuits $\hat{\mathfrak{C}}_k$. We quantify this probability using the next Proposition which is proved later in this section.

\begin{Prop}[\cite{DHS21}, Proposition 4.2]\label{prop:enough-circ}
	There exist $c_3>0$ and $N_0\ge 1$ such that for all $N\ge N_0$ and $L,L'\ge 0$ with $L-L' \ge 40$,
	\begin{align*}
		\phi_{B(n)}^\xi \left( \# J_{L',L} \le c_3 \frac{L-L'}{N} \;\bigg\vert\; A_3(2^{L})\right)\le \exp (-c_3 (L-L')).
	\end{align*}
\end{Prop}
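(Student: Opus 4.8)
The plan is to condition on $A_3(2^{L})$ throughout, show that each compound circuit event $\hat{\mathfrak{C}}_k$ fails with conditional probability at most $\epsilon_N:=Ce^{-cN}$ \emph{no matter what is revealed outside the block} $A_k:=\mathrm{Ann}(2^{10kN},2^{10(k+1)N})$, and then combine these per-block bounds through a union bound over the possible ``bad'' sets of blocks. Write $M:=\lfloor L/(10N)\rfloor-\lceil L'/(10N)\rceil$ for the number of admissible scales; I will work in the regime where $L-L'$ exceeds a large multiple of $N$, so that $M\asymp(L-L')/N$ (otherwise $J_{L',L}$ involves too few scales and the estimate is either vacuous or immediate). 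The arithmetic I am counting on: a failure probability $e^{-cN}$ per block, together with $M\asymp(L-L')/(10N)$ blocks and the requirement --- forced by $\#J_{L',L}$ being small --- that a positive fraction of the blocks fail, gives a bound of order $e^{-cNM}=e^{-c(L-L')/10}$. The factor $N$ from the (tiny) per-block failure probability cancels the $1/N$ in the number of blocks, which is exactly why the exponent in the conclusion is $-c_3(L-L')$ rather than $-c_3(L-L')/N$ as in Theorem~\ref{thm:large-dev}.

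The technical core is a single-fat-annulus estimate. Fix one of the fat sub-annuli $\Lambda=\mathrm{Ann}(2^{jN},2^{(j+1)N})$ occurring in Definition~\ref{def:circuits-k}. I want: there are $c>0$ and $N_0$ so that for $N\ge N_0$, every boundary condition, and every revelation of $\omega$ on $\Z^2\setminus\Lambda$ compatible with $A_3(2^{L})$, the conditional probability that $\Lambda$ contains a dual-closed circuit around its hole with at most two dual-open defect edges (resp.\ an open circuit with at most one closed defect edge) is at least $1-\epsilon_N$. Indeed, $A_3(2^{L})$ forces two open and one dual-closed arm to cross $\Lambda$, hence to cross each of the $N$ disjoint dyadic annuli it contains; revealing those dyadic annuli one at a time and using the domain Markov property, the residual law in each is a random-cluster measure (with some boundary condition) conditioned on three arms of colours $OOC$ crossing with prescribed endpoints. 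There, quad-crossing RSW~\eqref{eq:quad-rsw}, the strong arm separation lemma, and the gluing construction of Proposition~\ref{prop:gluing} yield, with probability bounded below by a universal $p>0$, a dual-closed arc winding once around the hole inside that dyadic annulus, crossing the dual-closed arm transversally and passing through each open arm at a single open edge --- i.e.\ a dual-closed circuit with exactly two defects (the $\mathfrak{D}$ case being symmetric). Since these events live in disjoint dyadic annuli, the chance that none of the $N$ occurs is at most $(1-p)^N\le\epsilon_N$.

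I expect this step --- building a circuit with the prescribed, bounded number of defects around arbitrarily located, conditioned arms, uniformly over boundary conditions --- to be the main obstacle. It is the kind of gluing Section~\ref{sec:gluing} was designed to support, but since neither generalized FKG nor Reimer's inequality is available one has to proceed carefully: fix the three arms via the arm separation lemma, apply RSW in the three sectors they cut $\Lambda$ into, and use Proposition~\ref{prop:gluing} near the arms to splice the dual-closed pieces through a single open edge of each open arm.

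Granted the single-annulus estimate, the rest is bookkeeping. Because $\hat{\mathfrak{C}}_k$ is an intersection of five such events in five disjoint fat sub-annuli of $A_k$, revealing those sub-annuli in turn and applying the estimate to each gives, by a union bound, $\phi_{B(n)}^\xi(\hat{\mathfrak{C}}_k^{\,c}\mid\mathcal G,A_3(2^{L}))\le 5\epsilon_N$ for every $\sigma$-algebra $\mathcal G$ generated by edges outside $A_k$. If $\#J_{L',L}\le c_3(L-L')/N$ then, for $c_3$ small, at least $(1-10c_3)M\ge M/2$ of the $M$ blocks fail; for a fixed such set $S$, ordering its blocks from outermost inward --- so that each outer $\hat{\mathfrak{C}}_k^{\,c}$ is measurable with respect to the information revealed before the next inner block --- the chain rule gives $\phi_{B(n)}^\xi(\bigcap_{k\in S}\hat{\mathfrak{C}}_k^{\,c}\mid A_3(2^{L}))\le(5\epsilon_N)^{|S|}$. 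Summing over the at most $\binom{M}{\lfloor 10c_3M\rfloor}\le 2^M$ choices of $S$ and using $|S|\ge(1-10c_3)M$, the probability in question is at most $\exp\!\bigl((\log2)M+(1-10c_3)M\log(5Ce^{-cN})\bigr)$; for $N\ge N_0$ large the exponent is at most $-\tfrac{c}{3}NM$, and since $NM\gtrsim L-L'$ in the regime considered this is $\le\exp(-c_3(L-L'))$ for $c_3$ small enough. Finally, every conditioning step above is just a revelation of configurations in complements of annuli followed by the domain Markov property, so no independence is ever needed.
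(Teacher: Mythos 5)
Your combinatorial skeleton (a per-block failure probability $e^{-cN}$, about $(L-L')/(10N)$ blocks, a union bound over bad sets, and the observation that the $N$'s cancel to give exponent $-c_3(L-L')$) matches the spirit of the paper's argument. But the technical core --- the single-annulus estimate --- is where your proof has a genuine gap, and you justify it by the wrong mechanism. You propose to \emph{build} a dual-closed circuit with exactly two defects by gluing RSW dual-closed arcs in the sectors cut out by the three conditioned arms and ``splicing the dual-closed pieces through a single open edge of each open arm.'' This splicing is not something Proposition \ref{prop:gluing} or FKG can deliver: for the circuit to have a defect at an open arm $\gamma$, the two dual-closed arcs approaching $\gamma$ from opposite sides must terminate at the two endpoints of the \emph{same} dual edge $e^*$ with $e\in\gamma$ open; two independently constructed RSW crossings land at uncontrolled points of $\gamma$ and cannot be forced to meet across a single prescribed edge. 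The correct (and essentially unavoidable) route is the one the paper takes in Claim \ref{claim:reimer}: by Menger's theorem/duality, the \emph{failure} of $\mathfrak{C}$ given the three arms is equivalent to the disjoint occurrence of an additional open crossing, and one bounds that extra arm by conditioning on the random region $\mathcal U$ spanned by the extremal arms and applying the one-arm decay $2^{-\alpha N}$ (via $N$ applications of quad-crossing RSW) in the complement. No positive construction of the defected circuit is ever made.

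The second problem is your claim that the per-block bound holds ``no matter what is revealed outside the block,'' which you need for the chain rule over the bad set $S$. Conditioning on the full configuration outside $A_k$ \emph{and} on $A_3(2^{L})$ turns the law in the block into a random cluster measure conditioned on a completion event with prescribed boundary connectivities, and this event need not reduce to ``three arms with free endpoints'': the fixed outside structure can force the arms to enter and exit the block through constrained or multiply-visited portions of the boundary, and you give no argument that the extra-arm estimate is uniform over such conditionings. The paper deliberately avoids this by never conditioning on outside data at all: it uses the set inclusion of $\{\#J_{L',L}\le c_3(L-L')/N\}\cap A_3(2^{L})$ into an intersection of block-local events, applies the domain Markov property to factor the \emph{unconditioned} probability, stochastically dominates the block failure indicators $X_m$ by independent Bernoullis with parameters $p_m = C2^{-\alpha N}\phi(A_3(2^{10mN},2^{10(m+1)N}))$, invokes a concentration lemma for independent Bernoullis, and finally uses quasi-multiplicativity to reassemble $\prod_m \phi(A_3(2^{10mN},2^{10(m+1)N}))$ into the global three-arm probability that cancels the conditioning denominator. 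To repair your proof you would either have to prove the uniform conditional estimate (substantially harder) or restructure along the paper's lines.
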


Combining Proposition \ref{prop:enough-circ} and the Chernoff bound, we have
\begin{align*}
	\phi_{B(n)}^\xi &\left(\# I_{L',L} \le c_4 \frac{L-L'}{N} \: \bigg \vert \: A_3(2^{L})\right) \\
	& \le \exp (-c_3 (L-L')) + \exp \left(c_4 \frac{L-L'}{N}\right) \E \left[ e^{-\# I_{L',L}} \mathbf{1}\{ \# J_{L',L} > c_3 \frac{L-L'}{N}\} \: \bigg \vert \: A_3(2^{L})\right].
\end{align*}

We decompose the expectation over all possible sets of $J_{L',L}$. 
\begin{align} \label{eq:decomp-j}
	\sum_{\mathcal J : \#\mathcal J\ge c_3 \frac{L-L'}{N}} \E \left[ e^{-\# I_{L',L}} \mid J_{L',L} = \mathcal J, A_3(2^{L})\right] \phi_{B(n)}^\xi(J_{L',L} = \mathcal J \mid A_3(2^{L})).
\end{align}
We enumerate $\mathcal J = \{k_1, \dots, k_R\}$. Then, conditional on $J_{L',L} = \mathcal J$, we have $\#I_{L',L} = \sum_{r=1}^R \mathbf{1}\{E_{10k_r+5}\}$. Define the filtration $(\mathcal F_r)$ by 
\begin{align*}
	\mathcal F_r = \sigma\{E_{10k_1+5}, \dots, E_{10k_{r-1}+5}\}\cap \{J_{L',L} = \mathcal J\} \cap A_3(2^{L}) \quad \text{for $r =1,\dots, R$.}
\end{align*}
Thus, the expectation in \eqref{eq:decomp-j} can be expanded as
\begin{align*}
	\E[e^{-\mathbf{1}\{E_{10k_1+5}\}} \cdots \E[e^{-\mathbf{1}\{E_{10k_{R-1}+5}\}} \E[e^{-\mathbf{1}\{E_{10k_R+5}\}} \mid \mathcal F_R]\mid \mathcal F_{R-1}] \cdots \mid \mathcal F_1]
\end{align*}
where for each $r = 1,\dots, R$, we have
\begin{align}\label{eq:cond-expec}
	\E[e^{-\mathbf{1}\{E_{10k_r+5}\}} \mid \mathcal F_r] = 1- (1-e^{-1}) \phi_{B(n)}^\xi (E_{10k_r+5} \mid \mathcal F_r).
\end{align}
Thus, we introduce the following lemma to give a uniform bound on the conditional probability above and decouple $E_{10k_r+5}$ from $\sigma\{E_{10k_1+5}, \dots, E_{10k_{r-1}+5}\}$ while conditional on $A_3(2^{L})$.

\begin{Lemma}\label{lemma:decoupling}
	There exists a universal constant $c_5 > 0$ such that the following holds. For any $k,L \geq 0$ and $N \geq 1$ satisfying $k\le \left\lfloor \frac{L}{10N}\right\rfloor -1$ and any events $F$ and $G$ depending on the status of edges in $B(2^{10kN})$ and $B(2^{10(k+1)N})^c$ respectively, one has
	\begin{equation} \label{eq:decoupling}
	    \phi_{B(n)}^\xi(E_{10k+5}\mid \hat{\mathfrak{C}}_k, A_3(2^{L}), F,G)\geq c_5\phi_{B(n)}^\xi(E_{10k+5}\mid \hat{\mathfrak{C}}_k, A_3(2^{L})).
	\end{equation}
\end{Lemma}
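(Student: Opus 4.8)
The plan is to carry over the decoupling argument of \cite{DHS21} to the random-cluster setting by trading independence and the BK/Reimer inequality for the domain Markov property, the quasi-multiplicativity of arm events \cite[Proposition 6.3]{DMT20}, the quad-crossing RSW bound \eqref{eq:quad-rsw}, and the gluing construction of Proposition \ref{prop:gluing}. Write $M:=\mathrm{Ann}(2^{10kN},2^{10(k+1)N})$ for the compound annulus and $A_i:=\mathrm{Ann}(2^{(10k+i)N},2^{(10k+i+1)N})$, $0\le i\le 9$, for its dyadic sub-annuli, so that the circuits composing $\hat{\mathfrak{C}}_k$ live in $A_0$ (the open circuit $\mathfrak{D}_{10k}$) and in $A_1,A_4,A_6,A_9$ (the dual-closed circuits), while $E_{10k+5}$ is $\omega|_{A_5}$-measurable by (A), $F$ is carried by $B(2^{10kN})$ (interior to $M$) and $G$ by $B(2^{10(k+1)N})^c$ (exterior to $M$).

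The structural point I would establish first is that, on $\hat{\mathfrak{C}}_k$, the three arms realizing $A_3(2^L)$ are funnelled into a canonical local picture at each circuit --- along a dual-closed circuit with two defects the two open arms must pass through the two defect edges while the dual-closed arm merges with the circuit, and along $\mathfrak{D}_{10k}$ the two open arms merge with the circuit while the dual-closed arm passes through its single defect --- so that on $\hat{\mathfrak{C}}_k$ one has $A_3(2^L)\subset A_3^{\mathrm{in}}\cap A_3^{\mathrm{mid}}\cap A_3^{\mathrm{out}}$, where $A_3^{\mathrm{in}}$, $A_3^{\mathrm{mid}}$, $A_3^{\mathrm{out}}$ are three-arm events in $B(2^{10kN})$, across $M$ (with colours and endpoints dictated by the circuits), and in $B(2^{10(k+1)N})^c$, respectively, carried by disjoint edge sets.

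The heart of the proof is a quasi-multiplicative factorisation: for any event $W$ carried by $M$ that contains $\hat{\mathfrak{C}}_k$ (so $W$ already prescribes $\mathfrak{D}_{10k}$ near $\partial B(2^{10kN})$ and $\mathfrak{C}_{10k+9}$ near $\partial B(2^{10(k+1)N})$, which is what lets one glue),
\[
\phi_{B(n)}^\xi\bigl(F\cap W\cap G\cap A_3(2^L)\bigr)\;\asymp\;\phi_{B(n)}^\xi\bigl(F\cap A_3^{\mathrm{in}}\bigr)\cdot\phi_{B(n)}^\xi\bigl(W\cap A_3^{\mathrm{mid}}\bigr)\cdot\phi_{B(n)}^\xi\bigl(G\cap A_3^{\mathrm{out}}\bigr),
\]
with universal implicit constants. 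The lower bound comes from revealing $\omega$ inside and outside $M$ and then applying Proposition \ref{prop:gluing} twice, in the sub-annuli adjacent to $\partial B(2^{10kN})$ and $\partial B(2^{10(k+1)N})$, to splice the three arm-systems together at constant cost, the domain Markov property making the three pieces behave independently once the splicing regions are exposed. The upper bound uses the inclusion above together with the domain Markov property: revealing $\omega|_M$ makes $\{F\cap A_3^{\mathrm{in}}\}$ and $\{G\cap A_3^{\mathrm{out}}\}$ conditionally independent, and the circuits in $A_0,A_1$ (resp. $A_9$) force the boundary condition induced on $\partial B(2^{10kN})$ (resp. $\partial B(2^{10(k+1)N})$) to act on the interior (resp. exterior) only through a bounded number of defect-endpoints; since wiring a bounded number of extra boundary pairs changes the measure by a bounded Radon--Nikodym factor for $q\in[1,4]$, the conditional probability $\phi^\eta_{B(2^{10kN})}(F\cap A_3^{\mathrm{in}})$ is then comparable, uniformly in the induced $\eta$, to $\phi^\xi_{B(n)}(F\cap A_3^{\mathrm{in}})$ --- this being the step that substitutes for BK/Reimer. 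Granting the factorisation and applying it with $W=E_{10k+5}\cap\hat{\mathfrak{C}}_k$ and with $W=\hat{\mathfrak{C}}_k$, and also with $F,G$ trivial, yields
\[
\phi_{B(n)}^\xi\bigl(E_{10k+5}\mid\hat{\mathfrak{C}}_k,A_3(2^L),F,G\bigr)\;\asymp\;\frac{\phi_{B(n)}^\xi\bigl(E_{10k+5}\cap\hat{\mathfrak{C}}_k\cap A_3^{\mathrm{mid}}\bigr)}{\phi_{B(n)}^\xi\bigl(\hat{\mathfrak{C}}_k\cap A_3^{\mathrm{mid}}\bigr)}\;\asymp\;\phi_{B(n)}^\xi\bigl(E_{10k+5}\mid\hat{\mathfrak{C}}_k,A_3(2^L)\bigr),
\]
since the $F$- and $G$-factors cancel in the first ratio and the outer three-arm factors in the second; \eqref{eq:decoupling} follows.

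I expect the main obstacle to be exactly the upper-bound half of the factorisation: controlling, without independence and without Reimer's inequality, the influence on $\{F\cap A_3^{\mathrm{in}}\}$ and $\{G\cap A_3^{\mathrm{out}}\}$ of the boundary conditions created on $\partial B(2^{10kN})$ and $\partial B(2^{10(k+1)N})$ upon revealing $\omega|_M$. One must use that conditioning on $A_3(2^L)$ pins the defects of the innermost and outermost dual-closed circuits to the two open arms, so that only boundedly many wiring patterns among the relevant boundary vertices survive and each perturbs the measure by a universally bounded factor. A secondary technical point, in line with the paper's "strategic applications of the domain Markov property", is to make the exploration revealing the circuits --- and hence the $\sigma$-algebra one conditions on --- well-defined, e.g. by always selecting innermost/outermost circuit realisations.
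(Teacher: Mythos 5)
Your overall architecture (factorize across the compound annulus, then cancel the $F$- and $G$-factors) matches the spirit of the paper's proof, and the lower-bound half of your factorization via Proposition \ref{prop:gluing} is fine. The gap is in the mechanism you propose for the upper-bound half. You claim that, upon revealing the configuration in the middle annulus, the boundary condition induced on $\partial B(2^{10kN})$ (resp.\ $\partial B(2^{10(k+1)N})$) differs from a reference one only by wiring a bounded number of pairs, hence by a bounded Radon--Nikodym factor. This is false for the deterministic boxes: the induced boundary condition is the full partition of $\partial B(2^{10kN})$ determined by open connections through $\mathrm{Ann}(2^{10kN},2^{(10k+2)N})$, and vertices of $\partial B(2^{10kN})$ can be wired to one another in arbitrarily complicated patterns by paths that never reach any screening circuit; the Radon--Nikodym derivative between, say, the free and the fully wired partition is of order $q^{|\partial B(2^{10kN})|}$, not $O(1)$. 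The screening you are after is provided only by a dual-closed two-defect circuit, and only for the \emph{random} domain it bounds: conditioning on the configuration on one side of such a circuit induces on the other side a boundary condition that is free except possibly for identifying the two defect edges.

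Exploiting that screening forces you to (i) decompose over the realizations of the innermost dual-closed circuits $\mathcal C\subset\mathrm{Ann}(2^{(10k+1)N},2^{(10k+2)N})$ and $\mathcal D\subset\mathrm{Ann}(2^{(10k+4)N},2^{(10k+5)N})$, together with the arcs of these circuits to which the dual-closed arm attaches, applying the domain Markov property from the inside out so that the explorations stay consistent; and then (ii) show that the resulting middle factor is comparable \emph{across different circuit realizations and arc indices}, so that the double sum over $(\mathcal C,\mathcal D,i,j)$ and $(\mathcal C',\mathcal D',i',j')$ refactors. Step (ii) is the swapping inequality \eqref{eq:dam-sap-comp}; it is not a boundary-condition comparison at all but a genuine arm-separation statement, and it is exactly what the strong separation lemmas of Section \ref{sec:ext-sep} (Lemmas \ref{lemma:ext-arm-sep} and \ref{lemma:int-arm-sep}), combined with the gluing of Proposition \ref{prop:gluing}, are proved for. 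Your proposal contains neither ingredient --- the circuit exploration is relegated to ``a secondary technical point'' and separation of arms at the random circuits is never invoked --- so the decisive step of the argument is missing.
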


Let $k = \lceil\frac{L'}{10N}\rceil, \dots, \lfloor\frac{L}{10N}\rfloor-1$. For any $F$ depending on edges in $B(2^{10kN})$ and any $\mathcal J$ containing $k$, we have as a result of Lemma \ref{lemma:decoupling}
\begin{align*}
	\phi_{B(n)}^\xi (E_{10k+5}\mid \hat{\mathfrak{C}}_k, F, J_{L',L} = \mathcal J, A_3(2^{L})) \ge c_5\phi_{B(n)}^\xi (E_{10k+5} \mid \hat{\mathfrak{C}}_k, A_3(2^{L})) \ge c_5 \tilde{c}_0,
\end{align*}
with the second inequality owing to \eqref{eq:ej-cond-3arm} and gluing constructions (Proposition \ref{prop:gluing}) for mitigating $\hat{\mathfrak{C}}_k$. Inserting back into \eqref{eq:cond-expec}, we have
\begin{align*}
	\E[e^{-\mathbf{1}\{E_{10k_r+5}\}} \mid \mathcal F_r] \le 1- c_5\tilde c_0 (1-e^{-1})
\end{align*}
Putting everything together, we have
\begin{align*}
	\phi_{B(n)}^\xi &\left(\# I_{n',n} \le c_4 \frac{L-L'}{N}\mid A_3(2^{L})\right) \\
	&\quad \le \exp (-c_3(L-L')) + \exp\left(c_4 \frac{L-L'}{N}\right) (1-c_4\tilde c_0 (1-e^{-1}))^{c_3 \frac{L-L'}{N}}
\end{align*}
This implies the existence of some universal $c_1>0$ such that 
\begin{align*}
		\phi_{B(n)}^\xi &\left(\# I_{n',n} \le c_1 c_2 \frac{L-L'}{N}\mid A_3(2^{L})\right) \le \exp \left(-c_1 c_2 \frac{L-L'}{N}\right).
\end{align*}

\begin{proof}[Proof of Proposition \ref{prop:enough-circ}]
	We first note the set relation
	\begin{align*}
		\left\{\# J_{n',n} \le c_3 \frac{L-L'}{N}, A_3(2^{L})\right\} 
		&\subset A_3\left(2^{10N\lceil \frac{L'}{10N}\rceil}\right) \\
		& \quad \cap \left\{\bigcap_{m= 10\lceil \frac{L'}{10N}\rceil}^{10\lfloor \frac{L}{10N}\rfloor-1} A_3(2^{mN}, 2^{(m+1)N}), \#J_{L',L} \le c_3 \frac{L-L'}{N} \right\} \\
		& \quad \cap A_3\left(2^{10N\lfloor \frac{L}{10N}\rfloor}, 2^{L}\right).
	\end{align*}
	Since the three events on the right-hand side are disjoint, we apply the domain Markov property twice, first on $B(2^{10N \lceil \frac{L'}{10N}\rceil})$ and then on $B(2^{10N \lfloor \frac{L}{10N}\rfloor})$, and obtain
	\begin{align}
		\phi_{B(n)}^\xi &\left( \# J_{n',n} \le c_3 \frac{L-L'}{N}, A_3(2^{L}) \right) \nonumber\\
		&\quad \le 
		\E_{B(n)}^\xi \left[ \phi_{B(2^{10N \lceil \frac{L'}{10N}\rceil})}^{\xi'} \left( A_3\left(2^{10N \lceil \frac{L'}{10N}\rceil}\right)\right) \right] \nonumber \\
		&\quad \quad \times 
		\phi_{B(n)}^\xi \left( \bigcap_{m= 10\lceil \frac{L'}{10N}\rceil}^{10\lfloor \frac{L}{10N}\rfloor-1} A_3(2^{mN}, 2^{(m+1)N}), \#J_{L',L} \le c_3 \frac{L-L'}{N}, A_3(2^{10N\lfloor \frac{L}{10N}\rfloor}, 2^{L}) \right) \nonumber \\
		&\quad \le 
		\E_{B(n)}^\xi \left[ \phi_{B(2^{10N \lceil \frac{L'}{10N}\rceil})}^{\xi'} \left( A_3\left(2^{10N \lceil \frac{L'}{10N}\rceil}\right)\right) \right] \nonumber\\
		&\quad \quad \times 
		\E_{B(n)}^\xi \left[ \phi_{B(2^{10N \lfloor \frac{L}{10N}\rfloor})}^{\xi''} \left( \bigcap_{m= 10\lceil \frac{L'}{10N}\rceil}^{10\lfloor \frac{L}{10N}\rfloor-1} A_3(2^{mN}, 2^{(m+1)N}), \#J_{L',L} \le c_3 \frac{L-L'}{N}\right) \right] \label{eq:splitting-j-a3}\\
		&\quad \quad \times 
		\phi_{B(n)}^\xi \left(A_3(2^{10N\lfloor \frac{L}{10N}\rfloor}, 2^{L}) \right). \nonumber
	\end{align}
	Here, $\xi'$ and $\xi''$ are implicit random variables of boundary conditions on $B(2^{10N \lceil \frac{L'}{10N}\rceil})$ and $B(2^{10N \lfloor \frac{L}{10N}\rfloor})$ respectively. Since three-arm probabilities are of the same order uniform over boundary conditions, it suffices to show that \eqref{eq:splitting-j-a3} can be bounded by 
	\begin{align} \label{eq:big-o-middle-term}
		O\left(\exp (-c (L-L')) \phi_{B(2^{10N \lfloor \frac{L}{10N}\rfloor})}^{\xi} \left(A_3(2^{10N\lceil \frac{L'}{10N}\rceil}, 2^{10N \lfloor \frac{L}{10N}\rfloor})\right)\right).
	\end{align}
	Here the choice of $\xi$ is arbitrary.
	
	For each scale $m$, Let $X_m$ be the indicator function on the event $A_3(2^{10mN}, 2^{10(m+1)N})$ occurs but $\hat{\mathfrak{C}}_m$ does not. Then,
	\begin{align*}
		\left\{\bigcap_{m= 10\lceil \frac{L'}{10N}\rceil}^{10\lfloor \frac{L}{10N}\rfloor-1} A_3(2^{mN}, 2^{(m+1)N}), \#J_{L',L} \le c_3 \frac{L-L'}{N} \right\} 
		\subset 
		\left\{ \sum_{m=\lceil \frac{L'}{10N}\rceil}^{\lfloor \frac{L}{10N}\rfloor -1} X_m \ge \lfloor \frac{L}{10N}\rfloor - \lceil \frac{L'}{10N}\rceil - c_3\frac{L-L'}{N}\right\}.
	\end{align*}
	Although the $X_m$'s are not independent, when applying the domain Markov property, the dependence between events are only reflected in the boundary condition. We will establish an upper bound uniform over boundary conditions for each $X_m$, thus allowing us access to a set of independent $Y_m$'s that stochastically dominate the $X_m$'s. 
	
	\begin{Lemma}
		For each $m$, let $Y_m$ be an independent Bernoulli random variable with parameter $p_m = 5c2^{-\alpha N} \phi_{B(2^{10(m+1)N})}^\xi (A_3(2^{10mN}, 2^{10(m+1)N}))$. Then, $X_m$ is stochastically dominated by $Y_m$.
	\end{Lemma}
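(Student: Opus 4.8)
The plan is to verify the pointwise bound that drives the stochastic domination, namely that for every boundary condition $\eta$,
\[
\phi^{\eta}_{\mathrm{Ann}(2^{10mN},2^{10(m+1)N})}(X_m=1)\le p_m;
\]
since $X_m$ depends only on the edges of $\mathrm{Ann}(2^{10mN},2^{10(m+1)N})$, the domain Markov property turns the conditioning on the other coordinates (and on $A_3(2^{L})$) into such a boundary condition, and this bound is exactly the hypothesis under which the standard coupling produces an independent family $(Y_m)$ of Bernoulli$(p_m)$ variables dominating $(X_m)$. Now $\{X_m=1\}$ requires $A_3(2^{10mN},2^{10(m+1)N})$ to occur while at least one of the five events $\mathfrak{D}_{10m},\mathfrak{C}_{10m+1},\mathfrak{C}_{10m+4},\mathfrak{C}_{10m+6},\mathfrak{C}_{10m+9}$ making up $\hat{\mathfrak{C}}_m$ fails, so a union bound over these five possibilities produces the factor $5$ in $p_m$ and reduces the claim to showing, uniformly over boundary conditions and over $a\in\{10m,10m+1,10m+4,10m+6,10m+9\}$,
\[
\phi^{\eta}_{\mathrm{Ann}(2^{10mN},2^{10(m+1)N})}\bigl(A_3(2^{10mN},2^{10(m+1)N})\cap\mathfrak{C}_a^{c}\bigr)\le c\,2^{-\alpha N}\,\phi^{\xi}_{B(2^{10(m+1)N})}\bigl(A_3(2^{10mN},2^{10(m+1)N})\bigr),
\]
with $\mathfrak{D}_{10m}$ replacing $\mathfrak{C}_{10m}$ when $a=10m$. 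Because $\mathfrak{C}_a^{c}$ reads only the edges of the sub-annulus $\mathrm{Ann}(2^{aN},2^{(a+1)N})$, the domain Markov property localizes it there, quasi-multiplicativity factors $A_3(2^{10mN},2^{10(m+1)N})$ through the scale ranges below, across, and above that sub-annulus using only a bounded number of applications, and three-arm probabilities are comparable across boundary conditions and ambient domains; so it is enough to prove, uniformly in $\eta$,
\[
\phi^{\eta}_{\mathrm{Ann}(2^{aN},2^{(a+1)N})}\bigl(A_3(2^{aN},2^{(a+1)N})\cap\mathfrak{C}_a^{c}\bigr)\le c\,2^{-\alpha N}\,\phi^{\eta}_{\mathrm{Ann}(2^{aN},2^{(a+1)N})}\bigl(A_3(2^{aN},2^{(a+1)N})\bigr).
\]

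To prove this I would use that $\mathrm{Ann}(2^{aN},2^{(a+1)N})$ has modulus $2^{N}$, hence contains of order $N$ disjoint dyadic annuli $U_j=\mathrm{Ann}(2^{aN+3j},2^{aN+3(j+1)})$, together with one conditional circuit-formation estimate: there is a universal $c_0>0$ such that, for every $j$ and every boundary condition on $U_j$, conditionally on $A_3(2^{aN+3j},2^{aN+3(j+1)})$ the probability that a dual-closed circuit around $B(2^{aN+3j})$ carrying exactly two dual-open defect edges exists inside $U_j$ is at least $c_0$ (and similarly for an open circuit with one closed defect when $a=10m$). Such a circuit inside $U_j$ also surrounds $B(2^{aN})$, so on $\mathfrak{C}_a^{c}\cap A_3(2^{aN},2^{(a+1)N})$ the circuit fails to form in \emph{every} one of the $\lfloor N/3\rfloor$ annuli $U_j$. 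Revealing the configuration annulus by annulus with the domain Markov property, each $U_j$ then contributes a multiplicative factor at most $1-c_0$ relative to the three-arm crossing of $U_j$, and recombining the surviving crossings into $\phi^{\eta}(A_3(2^{aN},2^{(a+1)N}))$ — the bookkeeping here being handled by quasi-multiplicativity and, where needed, the random-cluster strong arm separation lemma of Section~\ref{sec:ext-sep}, which keeps the endpoints of the three arms separated at every scale — yields the estimate with $c\,2^{-\alpha N}$ replaced by $C(1-c_0)^{\lfloor N/3\rfloor}$, which is at most $c\,2^{-\alpha N}$ once $N$ is large, for a sufficiently small $\alpha>0$.

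The remaining point, and the one I expect to be the genuine obstacle, is the conditional circuit-formation estimate, since in the random-cluster model neither the generalized FKG inequality nor Reimer's inequality is available. Conditionally on $A_3(2^{aN+3j},2^{aN+3(j+1)})$ the two open arms and the dual-closed arm crossing $U_j$ cut it into three topological sectors; in each sector quad-crossing RSW \eqref{eq:quad-rsw} supplies, with uniformly positive conditional probability, a dual-closed arc joining the two arms bounding that sector. The two arcs flanking the dual-closed crossing arm attach to it directly; the arc in the sector bounded by the two open arms is topologically forced to cross each of them once, and those two crossings — which one arranges to use a single dual-open edge apiece — are precisely the two defects. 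The separate pieces are patched together through corridors of bounded aspect ratio exactly as in Section~\ref{sec:gluing}, with every appeal to the generalized FKG inequality replaced by the domain-Markov gluing of Proposition~\ref{prop:gluing}, so that only monochromatic connection events enter each use of ordinary FKG. The delicate part is to run this construction with uniform constants while using \emph{only} domain Markov, RSW, and single-color FKG — in particular to guarantee that the sectors are wide enough for RSW and that the resulting circuit carries no further defects, which is where the arm-endpoint separation supplied by the strong arm separation lemma is used. Granting the circuit-formation estimate, the peeling argument of the previous paragraph completes the proof.
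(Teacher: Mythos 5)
Your outer structure coincides with the paper's: the union bound over the five circuit events (giving the factor $5$), the localization of each $\mathfrak{C}_{10m+i}^c$ (resp.\ $\mathfrak{D}_{10m}^c$) to its sub-annulus by two applications of the domain Markov property, the recombination by quasi-multiplicativity, and the observation that a bound uniform over boundary conditions yields stochastic domination by independent Bernoulli variables. Where you diverge is in the core estimate on a single sub-annulus $\mathrm{Ann}(2^{aN},2^{(a+1)N})$. The paper (Claim \ref{claim:reimer}) reformulates $A_3\cap\mathfrak{C}_a^{c}$ via Menger's theorem/duality as the \emph{disjoint occurrence} $A_3\circ A_{1,O}$ of the three arms with one extra open arm, conditions on the extremal region $\mathcal U$ containing the three arms, and pays the $2^{-\alpha N}$ once, as a conditional one-arm probability in the complement of $\mathcal U$. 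You instead propose to show that, conditionally on the three-arm crossing of each of $\lfloor N/3\rfloor$ disjoint dyadic sub-sub-annuli $U_j$, a two-defect circuit forms with probability at least $c_0$, and to peel scale by scale.

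The gap is in the recombination step of that peeling argument. Revealing the $U_j$ one at a time gives at best
\[
\prod_{j}(1-c_0)\,\sup_{\eta}\phi^{\eta}_{U_j}\bigl(A_3(U_j)\bigr),
\]
and to compare this with $\phi^{\eta}\bigl(A_3(2^{aN},2^{(a+1)N})\bigr)$ you must apply quasi-multiplicativity at each of the $\lfloor N/3\rfloor$ scales, losing a constant $C>1$ per application; the resulting bound $\bigl(C(1-c_0)\bigr)^{\lfloor N/3\rfloor}$ is not small unless $c_0$ beats $C$, and nothing in the construction guarantees that. The alternative of bounding $\phi^{\eta}\bigl(\bigcap_j C_j^{c}\mid A_3(2^{aN},2^{(a+1)N})\bigr)$ directly by $(1-c_0)^{\lfloor N/3\rfloor}$ requires decoupling the scales \emph{conditionally on the global three-arm event}, which is precisely the content of Lemma \ref{lemma:decoupling} and Theorem \ref{thm:large-dev}; but the present lemma sits upstream of those results (it feeds Proposition \ref{prop:enough-circ}, which feeds Theorem \ref{thm:large-dev}), and their proofs rely on the very decoupling circuits whose absence you are trying to bound, so that route is circular. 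The extra-arm reformulation is what lets the paper incur the $2^{-\alpha N}$ in one shot with only a single, bounded use of localization and quasi-multiplicativity. Your conditional circuit-formation estimate itself is consistent with the paper's toolkit (the paper invokes the same assertion, via Proposition \ref{prop:gluing}, to verify hypothesis (B) of Theorem \ref{thm:large-dev}); it is the iteration over $N$ scales that does not close.
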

	
	\begin{proof}
		We give a uniform upper bound to $\mathbf P(X_m=1) = \phi_{B(2^{10(m+1)N})}^{\xi} \left(A_3(2^{10mN}, 2^{10(m+1)N}), \hat{\mathfrak{C}}_m^c\right)$. By a union bound, 
		\begin{align} \label{eq:splitting-c-hat}
			\begin{split}
				\phi_{B(2^{10(m+1)N})}^{\xi} & \left(A_3(2^{10mN}, 2^{10(m+1)N}), \hat{\mathfrak{C}}_m^c\right) \\
				& \quad = \phi_{B(2^{10(m+1)N})}^{\xi} \left(A_3(2^{10mN}, 2^{10(m+1)N}), \mathfrak{D}_{10m}^c \cup \bigcup_{i=1,4,6,9} \mathfrak{C}_{10m+i}^c \right) \\
				& \quad \le \phi_{B(2^{10(m+1)N})}^{\xi} \left( A_3(2^{10mN}, 2^{10(m+1)N}), \mathfrak{D}_{10m}^c\right) \\
				& \quad \quad + \sum_{i=1,4,6,9} \phi_{B(2^{10(m+1)N})}^{\xi} \left(A_3(2^{10mN}, 2^{10(m+1)N}), \mathfrak{C}_{10m+i}^c\right).
			\end{split}
		\end{align}
		For each $i$ and similarly for the probability with $\mathfrak{D}_{10m}^c$, we use the domain Markov property twice,
		\begin{align} \label{eq:split-domain}
			\begin{split}
				\phi_{B(2^{10(m+1)N})}^{\xi} &\left( A_3(2^{10mN}, 2^{10(m+1)N}), \mathfrak{C}_{10m+i}^c\right) \\
				& \quad \le \E_{B(2^{10(m+1)N})}^{\xi} \left[  \phi_{B(2^{(10m+i)N})}^{\eta_1}\left(A_3(2^{10mN}, 2^{(10m+i) N})\right) \right]
				\\ 
				&\quad \quad \times \E_{B(2^{10(m+1)N})}^{\xi} \left[ \phi_{B(2^{(10m+i+1)N})}^{\eta_2}\left(A_3(2^{(10m+i)N}, 2^{(10m+i+1)N}), \mathfrak{C}_{10m+i}^c\right) \right] \\
				&\quad \quad \times \phi_{B(2^{10(m+1)N})}^{\xi} \left(A_3(2^{(10m+i+1)N}, 2^{10(m+1)N})\right),
			\end{split}
		\end{align}
		where $\eta_1, \eta_2$ are random variables of boundary conditions.
		
		\begin{Claim} \label{claim:reimer}
			There exists $\alpha \in (0,1)$ and $c_6>0$ such that for any boundary condition $\eta$,  we have
			\begin{align*}
				\phi_{B(2^{(10m+i+1)N})}^\eta &\left( A_3(2^{(10m+i)N}, 2^{(10m+i+1)N}), \mathfrak{C}_{10m+i}^c \right) \\
				&\qquad \le c_6 2^{-\alpha N} \phi_{B(2^{(10m+i+1)N})}^\eta \left( A_3(2^{(10m+i)N}, 2^{(10m+i+1)N})\right), \\
				\phi_{B(2^{(10m+1)N})}^\eta &\left( A_3(2^{10mN}, 2^{(10m+1)N}), \mathfrak{D}_{10m}^c \right) \\
				&\qquad \le c_6 2^{-\alpha N} \phi_{B(2^{(10m+1)N})}^\eta \left( A_3(2^{10mN}, 2^{(10m+1)N})\right).
			\end{align*}
		\end{Claim}
		Plugging this back into \eqref{eq:split-domain} and using gluing constructions (see Section \ref{sec:gluing}), we have
		\begin{equation} \label{eq:c-comp}
			\phi_{B(2^{10(m+1)N})}^{\xi} \left( A_3(2^{10mN}, 2^{10(m+1)N}), \mathfrak{C}_{10m+i}^c\right) \le c_6 2^{-\alpha N} \phi_{B(2^{10(m+1)N})}^{\xi} \left( A_3(2^{10mN}, 2^{10(m+1)N})\right).
		\end{equation}
		Similarly, 
		\begin{equation} \label{eq:d-comp}
			\phi_{B(2^{10(m+1)N})}^{\xi} \left( A_3(2^{10mN}, 2^{10(m+1)N}), \mathfrak{D}_{10m}^c\right) \le c_6 2^{-\alpha N} \phi_{B(2^{10(m+1)N})}^{\xi} \left( A_3(2^{10mN}, 2^{10(m+1)N})\right).
		\end{equation}
		Plugging \eqref{eq:c-comp} and \eqref{eq:d-comp} into \eqref{eq:splitting-c-hat} and using quasi-multiplicativity, we have
		\begin{align*}
			\phi_{B(2^{10(m+1)N})}^{\xi} \left( A_3(2^{10mN}, 2^{10(m+1)N}), \hat{\mathfrak{C}}_m^c\right) \le c_7 2^{-\alpha N} \phi_{B(2^{10(m+1)N})}^{\xi} \left(A_3(2^{10mN}, 2^{10(m+1)N})\right).
		\end{align*}
		Choosing $p_m$ to be the right-hand side gives us that $Y_m$ stochastically dominates $X_m$.
	\end{proof}
	
	Note that there exists $\beta$ such that $p_m \in (2^{-\beta N}, 1)$ for all $m$. We use an elementary lemma from \cite{DHS21}  on the concentration of independent Bernoulli random variables.
	\begin{Lemma}[\cite{DHS21}, Lemma 4.3] \label{lemma:concentration-ber}
		Given $\eps_0 \in (0,1)$ and $M\ge 1$, if $Y_1, \dots, Y_M$ are any independent Bernoulli random variables with parameters $p_1, \dots, p_M$, respectively, satisfying $p_i \in [\eps_0, 1]$ for all $i$, then for all $r\in (0,1)$,
		\begin{align*}
			\mathbf P\left( \sum_{m=1}^M Y_m \ge rM \right) \le (1/\eps_0)^{M(1-r)} 2^M \prod_{m=1}^M p_m.
		\end{align*}
	\end{Lemma}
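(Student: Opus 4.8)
The plan is to run the standard exponential Chernoff bound and then choose the free parameter so that the lower bound $p_i \ge \eps_0$ is used to extract a factor $p_m$ from each term. First, for any $\lambda \ge 0$, Markov's inequality applied to $e^{\lambda \sum_m Y_m}$ together with the independence of the $Y_m$ gives
\[
    \mathbf P\left(\sum_{m=1}^M Y_m \ge rM\right) \le e^{-\lambda r M}\prod_{m=1}^M \E\!\left[e^{\lambda Y_m}\right] = e^{-\lambda r M}\prod_{m=1}^M\left(1 - p_m + p_m e^\lambda\right),
\]
where we have used $\E[e^{\lambda Y_m}] = (1-p_m) + p_m e^\lambda$.

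Next, I would bound each factor so that one copy of $p_m$ is freed up. Since $p_m \ge \eps_0$ we have $1 - p_m \le 1 \le p_m/\eps_0$, hence $1 - p_m + p_m e^\lambda \le p_m(1/\eps_0 + e^\lambda)$, which yields
\[
    \mathbf P\left(\sum_{m=1}^M Y_m \ge rM\right) \le e^{-\lambda r M}\left(\tfrac1{\eps_0} + e^\lambda\right)^{\!M}\prod_{m=1}^M p_m.
\]
Finally, I would take $\lambda = \ln(1/\eps_0) \ge 0$, which makes $1/\eps_0 + e^\lambda = 2/\eps_0$ and $e^{-\lambda r} = \eps_0^{\,r}$, so that $e^{-\lambda r}\bigl(1/\eps_0 + e^\lambda\bigr) = 2\eps_0^{\,r-1} = 2(1/\eps_0)^{1-r}$; raising to the $M$-th power gives exactly $\mathbf P(\sum_m Y_m \ge rM) \le 2^M (1/\eps_0)^{M(1-r)} \prod_{m=1}^M p_m$.

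This is an elementary estimate, so there is no genuine obstacle; the only points requiring a little care are that $\eps_0 \le 1$ is what guarantees $\lambda = \ln(1/\eps_0) \ge 0$ (so the Markov step is legitimate), that the inequality $1 - p_m \le p_m/\eps_0$ uses precisely the hypothesis $p_m \ge \eps_0$, and that the particular choice of $\lambda$ is exactly the one making the two exponents $(1/\eps_0)^{M(1-r)}$ and $2^M$ appear simultaneously.
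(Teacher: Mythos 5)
Your proof is correct. Note that the paper itself does not prove this lemma; it is quoted verbatim from \cite{DHS21} (Lemma 4.3 there), so there is no in-paper argument to compare against line by line. Your route is the exponential Chernoff bound with the specific tilt $\lambda=\ln(1/\eps_0)$, and every step checks out: the Markov step is valid because $\eps_0\le 1$ gives $\lambda\ge 0$; the factorization $1-p_m+p_me^{\lambda}\le p_m(1/\eps_0+e^{\lambda})$ uses exactly $p_m\ge\eps_0$; and the final substitution produces $2^M(1/\eps_0)^{M(1-r)}\prod_m p_m$ on the nose (in the degenerate case $\eps_0=1$ the bound is trivially true, as it should be).

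For comparison, the shape of the constants $2^M$ and $(1/\eps_0)^{M(1-r)}$ suggests the original argument is purely combinatorial: for each outcome with set $S$ of successes, $|S|\ge rM$, one bounds
\begin{equation*}
    \prod_{i\in S}p_i\prod_{i\notin S}(1-p_i)\ \le\ \prod_{i\in S}p_i\ =\ \frac{\prod_{i=1}^{M}p_i}{\prod_{i\notin S}p_i}\ \le\ (1/\eps_0)^{M(1-r)}\prod_{i=1}^{M}p_i,
\end{equation*}
and then sums over the at most $2^M$ subsets $S$. That version explains the $2^M$ as a subset count and the $(1/\eps_0)^{M(1-r)}$ as the cost of dividing out the missing $p_i$'s; your Chernoff version recovers the identical bound analytically, with the two factors emerging from the optimized choice of $\lambda$. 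Either proof is perfectly adequate for the way the lemma is used in Proposition \ref{prop:enough-circ}, since all that matters there is the product $\prod_m p_m$ multiplied by an exponential-in-$M$ prefactor.
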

	
	Applying Lemma \ref{lemma:concentration-ber} by taking $M = \lfloor \frac{L}{10N}\rfloor - \lceil \frac{L'}{10N}\rceil$ and $r = 1-20c_3$, we have
	\begin{align}
		\mathbf{P} &\left( \sum_{m=\lceil \frac{L'}{10N}\rceil}^{\lfloor \frac{L}{10N}\rfloor -1} X_m \ge \lfloor \frac{L}{10N}\rfloor - \lceil \frac{L'}{10N}\rceil - c_3\frac{L-L'}{N} \right) \nonumber\\
		&\quad \quad \le \mathbf{P} \left( \sum_{m=\lceil \frac{L'}{10N}\rceil}^{\lfloor \frac{L}{10N}\rfloor -1} Y_m \ge \lfloor \frac{L}{10N}\rfloor - \lceil \frac{L'}{10N}\rceil - c_3\frac{L-L'}{N} \right) \nonumber\\
		&\quad \quad \le \left(2^{20c_3\beta N+1}\right)^{\lfloor \frac{L}{10N}\rfloor - \lceil \frac{L'}{10N}\rceil} \prod_{m=\lceil \frac{L'}{10N}\rceil}^{\lfloor \frac{L}{10N}\rfloor -1} p_m. \label{eq:prod-pm}
	\end{align}

	Plugging in $p_m = c_7 2^{-\alpha N} \phi_{B(2^{10(m+1)N})}^{\xi} \left(A_3(2^{10mN}, 2^{10(m+1)N})\right)$, we have
	\begin{align*}
		\eqref{eq:prod-pm} \le \left(2^{20c_3\beta N+1}\right)^{\lfloor \frac{L}{10N}\rfloor - \lceil \frac{L'}{10N}\rceil} C 2^{-\alpha N} \phi_{B(2^{10N\lfloor \frac{L}{10N}\rfloor})}^{\xi} \left(A_3(2^{10N\lceil \frac{L'}{10N}\rceil}, 2^{10N\lfloor \frac{L}{10N}\rfloor})\right),
	\end{align*}
	which demonstrates \eqref{eq:big-o-middle-term}.
\end{proof}

\begin{proof}[Proof of Claim \ref{claim:reimer}]
	By duality or Menger's theorem, the event $A_3(2^{kN}, 2^{(k+1)N})\cap \mathfrak{C}_{k}^c$ (resp. the event $A_3(2^{kN}, 2^{(k+1)N})\cap \mathfrak{D}_{k}^c$) is equivalent to the disjoint occurrence of $A_3(2^{kN}, 2^{(k+1)N})$ and $A_{1,O}(2^{kN}, 2^{(k+1)N})$ (resp. $A_{1,C^*}(2^{kN}, 2^{(k+1)N})$). We denote the disjoint occurrence of two events $A$ and $B$ by $A\circ B$. We have
	\begin{align*}
		\phi_{B(2^{(k+1)N})}^\eta &\left( A_3(2^{kN}, 2^{(k+1)N}) , \mathfrak{C}_k^c \right) \\
		&\qquad = \phi_{B(2^{(k+1)N})}^\eta \left(A_3(2^{kN}, 2^{(k+1)N}) \circ A_{1,O}(2^{kN}, 2^{(k+1)N})\right) 
	\end{align*}
	
	We use a now standard argument that conditional on the two open and one dual-closed arm, the probability that an additional arm exists decays at $(2^N)^{-\alpha}$ for some $\alpha \in (0,1)$. This argument requires \emph{localizing} the endpoints of the arms. 
	
	Let $I = \{I_i\}_{i=1,2,3}$ and $J = \{J_i\}_{i=1,2,3}$ be two sequences of disjoint intervals on $\partial B(2^{kN})$ and $\partial B(2^{(k+1)N})$ in counterclockwise order such that for all $i$, $|I_i| \ge \delta 2^{kN}$ for some $\delta>0$. Similarly, for all $i$, $|J_i| \ge \delta 2^{(k+1)N}$. Let $A_3^{I, J}(2^{kN}, 2^{(k+1)N})$ be the event that $A_3(2^{kN}, 2^{(k+1)N})$ occurs and there is an open (open, dual-closed, resp.) arm with endpoints in $I_1$ ($I_2, I_3$, resp.) on $\partial B(2^{kN})$ and $J_1$ ($J_2, J_3$, resp.) on $\partial B(2^{(k+1)N})$. Localization of arm events \cite[Proposition 6.5]{DMT20} implies that for any $I, J$ as above,
	\begin{align*}
		\phi_{B(2^{(k+1)N})}^\eta \left(A_3(2^{kN}, 2^{(k+1)N}) \right) \asymp \phi_{B(2^{(k+1)N})}^\eta \left(A_3^{I,J}(2^{kN}, 2^{(k+1)N}) \right)
	\end{align*}
	and
	\begin{align*}
		\phi_{B(2^{(k+1)N})}^\eta &\left(A_3(2^{kN}, 2^{(k+1)N}) \circ A_{1,O}(2^{kN}, 2^{(k+1)N}) \right) \\
		& \qquad \asymp \phi_{B(2^{(k+1)N})}^\eta \left(A_3^{I,J}(2^{kN}, 2^{(k+1)N}) \circ A_{1,O}(2^{kN}, 2^{(k+1)N}) \right).
	\end{align*}
	
	Let $\gamma_1$ be the counterclockwise-most open arm from $I_1$ to $J_1$ that is disjoint from at least one open arm from $I_2$ to $J_2$ and let $\gamma_3$ be the clockwise-most dual-closed arm from $I_3$ to $J_3$. Let $\mathcal U$ denote the random region bounded by $\gamma_1$, $\gamma_3$, $\partial B(2^{kN})$, and $\partial B(2^{(k+1)N})$ that also contains an open arm from $I_2$ to $J_2$. We say that $U$ is ``admissible'' if $U$ is a possible value of the random region $\mathcal U$. Conditional on the location of $\mathcal U$,
	\begin{align*}
	\phi_{B(2^{(k+1)N})}^\eta &\left(A_3^{I,J}(2^{kN}, 2^{(k+1)N}) \circ A_{1,O}(2^{kN}, 2^{(k+1)N})\right) \\
		&= \sum_{\text{admissible } U} \E_{B(2^{(k+1)N})}^\eta \left[\phi_{B(2^{(k+1)N}) \setminus U}^\iota \left(A_{1,O}(2^{kN}, 2^{(k+1)N}) \right)\right] \phi_{B(2^{(k+1)N})}^\eta(\mathcal U = U)
	\end{align*}
	where the boundary condition $\iota$ is a random variable depending on $U$ and $\eta$. The one-arm event in $B(2^{(k+1)N}) \setminus U$ can be estimated using quad-crossing RSW estimates \eqref{eq:quad-rsw} and an analogue of quasi-multiplicativity $N$ times as follows:
	\begin{align*}
		\phi_{B(2^{(k+1)N}) \setminus U}^\iota (A_{1,O}(2^{kN}, 2^{(k+1)N})) 
		\le c \prod_{\ell= kN}^{(k+1)N} \phi_{B(2^{(k+1)N}) \setminus U}^\iota (A_1(2^\ell, 2^{\ell+1})) 
		\le c (2^{-\alpha})^{N}.
	\end{align*}
	Thus, 
	\begin{align*}
		\phi_{B(2^{(k+1)N})}^\eta \left( A_3(2^{kN}, 2^{(k+1)N}) , \mathfrak{C}_k^c \right) \le c 2^{-\alpha N} \phi_{B(2^{(k+1)N})}^\eta \left(A_3(2^{kN}, 2^{(k+1)N})\right).
	\end{align*}
	By the same reasoning, 
	\begin{align*}
		\phi_{B(2^{(k+1)N})}^\eta \left( A_3(2^{kN}, 2^{(k+1)N}) , \mathfrak{D}_k^c \right) \le c' 2^{-\alpha N} \phi_{B(2^{(k+1)N})}^\eta \left(A_3(2^{kN}, 2^{(k+1)N})\right).
	\end{align*}
\end{proof}

Now we prove Lemma \ref{lemma:decoupling}.
\begin{proof}[Proof of Lemma \ref{lemma:decoupling}]
	Recall that $F$ and $G$ depend on the status of edges in $B(2^{10kN})$ and $B(2^{10(k+1)N})^c$ respectively. We focus on demonstrating how we remove the conditioning on $F$ as the other side is similar.
	\begin{align} \label{eq:decouple-f}
		\phi_{B(n)}^\xi (E_{10k+5} \mid \hat{\mathfrak{C}}_k, A_3(2^{L}), F, G) \ge c_8 \phi_{B(n)}^\xi (E_{10k+5} \mid \hat{\mathfrak{C}}_k, A_3(2^{L}), G).
	\end{align}
	
	Recall from Definition \ref{def:circuits-k} that $\hat{\mathfrak{C}}_k$ is the simultaneous occurrence of a stack of circuits. We use these circuits to separate $F$ and $E_{10k+5}$ while conditioning on $A_3(2^{L})$. 
	
	A dual-closed circuit $\mathcal C$ with two defect edges is naturally divided into two arcs between these defects consisting of open connections. Fix some deterministic ordering of arcs and label the two arcs $\mathrm{Arc}_1(\mathcal C)$ and $\mathrm{Arc}_2(\mathcal C)$ in this ordering. Let $X_- (\mathcal C, i)$ be the event such that:
	\begin{enumerate}
		\item $\mathfrak{D}_{10k}$ occurs;
		\item $\mathcal C$ is the \emph{innermost} dual-closed circuit with two defect edges in $\mathrm{Ann}(2^{(10k+1)N}, 2^{(10k+2)N})$;
		\item the origin is connected to the two defects in $\mathcal C$ through two disjoint open paths;
		\item $(\frac12, -\frac12)$ is connected to $\mathrm{Arc}_i(\mathcal C)$ through a dual-closed path.
	\end{enumerate} 
	Note that the occurrence of $\mathfrak{D}_{10k}$, together with the two open paths through the origin, guarantees that item (4) only occurs for one of the two arcs. Hence $X_-(\mathcal C, i)$ occurs for exactly one choice of $\mathcal C$ and $i$.
	Similarly, let $X_+ (\mathcal D, j)$ be the event such that:
	\begin{enumerate}
		\item $\mathfrak{C}_{10k+i}$ occurs for $i=6,9$;
		\item $\mathcal D$ is the \emph{innermost} dual-closed circuit with two defect edges in $\mathrm{Ann}(2^{(10k+4)N}, 2^{(10k+5)N})$;
		\item the two defects are connected to $\partial B(2^{L})$ through disjoint open paths;
		\item $\mathrm{Arc}_j(\mathcal D)$ is connected to $\partial B(2^{L})$ through a dual-closed path.
	\end{enumerate}
	We also need a three-arm event between $\mathcal C$ and $\mathcal D$. The dual-closed arm connects an arc of $\mathcal C$ and an arc of $\mathcal D$ and the indices of the arcs are important. Thus, let $X(\mathcal C, \mathcal D, i, j)$ be the event such that:
	\begin{enumerate}
		\item there is a dual-closed path connecting $\mathrm{Arc}_i(\mathcal C)$ and $\mathrm{Arc}_j(\mathcal D)$ in the region between $\mathcal C$ and $\mathcal D$;
		\item there is a pair of disjoint open paths in the region between $\mathcal C$ and $\mathcal D$ connecting a defect of $\mathcal C$ and a defect of $\mathcal D$.
	\end{enumerate}
	Note that for each pair of $i$ and $j$, topologically there is only one possible way to connect the defects of $\mathcal C$ and $\mathcal D$ with open paths.
	
	Then, the occurrence of $\hat{\mathfrak{C}}_k$ allows for a decomposition in admissible $\mathcal C$, $\mathcal D$, and $i,j = 1,2$ for the following event.
	\begin{align*}
		\phi_{B(n)}^\xi &(E_{10k+5}, \hat{\mathfrak{C}}_k, A_3(2^{L}), F, G) \\
		&\quad = \sum_{\mathcal C, \mathcal D, i, j}\phi_{B(n)}^\xi (F, X_-(\mathcal C, i), X(\mathcal C, \mathcal D, i, j), X_+(\mathcal D, j), E_{10k+5}, G).
	\end{align*}
	
	Since $\mathcal C$ is the innermost circuit, its position can be determined via exploration in the interior of $\mathcal C$. Using the domain Markov property first on $\mathrm{Ext}(\mathcal C) := B(n) \setminus \mathrm{Int} (\mathcal C)$, the boundary condition induced by the configuration in $\mathrm{Int}(\mathcal C)$ identifies only the two defect edges in $\mathcal C$, which we denote by $0^*(\mathcal C)$. Thus, we have 
	\begin{align*}
		\phi_{B(n)}^\xi &(F, X_-(\mathcal C, i), X(\mathcal C, \mathcal D, i, j), X_+(\mathcal D, j), E_{10k+5}, G) \\
		&\quad = \phi_{B(n)}^\xi (F, X_-(\mathcal C, i)) \phi_{\mathrm{Ext}(\mathcal C)}^{0^*(\mathcal C)} (X(\mathcal C, \mathcal D, i, j), X_+(\mathcal D, j), E_{10k+5}, G).
	\end{align*}
	Using the domain Markov property again on $\mathrm{Ext}(\mathcal D)$, the configuration in $\mathrm{Int}(\mathcal D) \setminus \mathrm{Ext}(\mathcal C)$ induces a free boundary condition on $\mathcal D$. Thus, we have
	\begin{align}
		\phi_{B(n)}^\xi &(E_{10k+5}, \hat{\mathfrak{C}}_k, A_3(2^{L}), F, G) \nonumber\\
		&\quad = \sum_{\mathcal C, \mathcal D, i, j} \phi_{B(n)}^\xi (F, X_-(\mathcal C, i)) \phi_{\mathrm{Ext}(\mathcal C)}^{0^*(\mathcal C)} (X(\mathcal C, \mathcal D, i, j)) \phi_{\mathrm{Ext}(\mathcal D)}^{0} (X_+(\mathcal D, j), E_{10k+5}, G). \label{eq:dom-mar-e-fg}
	\end{align}
	where $0$ is the free boundary condition.
	A similar decomposition gives, 
	\begin{align}
		\phi_{B(n)}^\xi &(\hat{\mathfrak{C}}_k, A_3(2^{L}), G) \nonumber\\
		&\quad = \sum_{\mathcal C', \mathcal D', i', j'} \phi_{B(n)}^\xi (X_-(\mathcal C', i')) \phi_{\mathrm{Ext}(\mathcal C')}^{0^*(\mathcal C')}(X(\mathcal C', \mathcal D', i', j')) \phi_{\mathrm{Ext}(\mathcal D')}^{0} (X_+(\mathcal D', j'), G). \label{eq:dom-mar-g}
	\end{align}
	Multiplying \eqref{eq:dom-mar-e-fg} and \eqref{eq:dom-mar-g} gives
	\begin{align} \label{eq:dom-mar-multiplied}
		\begin{split}
			\phi_{B(n)}^\xi &(E_{10k+5}, \hat{\mathfrak{C}}_k, A_3(2^{L}), F, G) \phi_{B(n)}^\xi (\hat{\mathfrak{C}}_k, A_3(2^{L}), G) \\
			&=\sum_{\substack{\mathcal C, \mathcal D, i, j \\ \mathcal C', \mathcal D', i', j'}} \Big[ \phi_{B(n)}^\xi (F, X_-(\mathcal C, i)) \phi_{\mathrm{Ext}(\mathcal C)}^{0^*(\mathcal C)} (X(\mathcal C, \mathcal D, i, j)) \phi_{\mathrm{Ext}(\mathcal D)}^{0} (X_+(\mathcal D, j), E_{10k+5}, G) \\
			&\qquad \times \phi_{B(n)}^\xi (X_-(\mathcal C', i')) \phi_{\mathrm{Ext}(\mathcal C')}^{0^*(\mathcal C')}(X(\mathcal C', \mathcal D', i', j')) \phi_{\mathrm{Ext}(\mathcal D')}^{0} (X_+(\mathcal D', j'), G)\Big].
		\end{split}
	\end{align}
	We use the following estimate which is the random cluster analogue of \cite[Lemma 6.1]{DS11}. It is essentially a corollary of the so-called strong separation lemmas, the random cluster version of which we prove in Section \ref{sec:ext-sep}. To get from the strong separation lemmas to the following estimate, a proof sketch of no essential difference can be found in \cite{DS11}. 
	
	There exists a uniform constant $c_9$ such that the following holds for all choices of circuits $\mathcal C, \mathcal C', \mathcal D, \mathcal D'$ and arc indices $i, i', j, j'$:
	\begin{align} \label{eq:dam-sap-comp}
		\frac{\phi_{\mathrm{Ext}(\mathcal C)}^{0^*(\mathcal C)}(X(\mathcal C, \mathcal D', i, j')) \phi_{\mathrm{Ext}(\mathcal C')}^{0^*(\mathcal C')}(X(\mathcal C', \mathcal D, i', j))}{\phi_{\mathrm{Ext}(\mathcal C)}^{0^*(\mathcal C)}(X(\mathcal C, \mathcal D, i, j)) \phi_{\mathrm{Ext}(\mathcal C')}^{0^*(\mathcal C')}(X(\mathcal C', \mathcal D', i', j'))} < c_9.
	\end{align}
	Applying \eqref{eq:dam-sap-comp} to the summand of \eqref{eq:dom-mar-multiplied}, we have
	\begin{align*}
		\phi_{B(n)}^\xi &(F, X_-(\mathcal C, i)) \phi_{\mathrm{Ext}(\mathcal C)}^{0^*(\mathcal C)} (X(\mathcal C, \mathcal D, i, j)) \phi_{\mathrm{Ext}(\mathcal D)}^{0} (X_+(\mathcal D, j), E_{10k+5}, G)\\
		&\qquad \times \phi_{B(n)}^\xi (X_-(\mathcal C', i')) \phi_{\mathrm{Ext}(\mathcal C')}^{0^*(\mathcal C')}(X(\mathcal C', \mathcal D', i', j')) \phi_{\mathrm{Ext}(\mathcal D')}^{0} (X_+(\mathcal D', j'), G) \\
		> &c_9^{-1}  \phi_{B(n)}^\xi (F, X_-(\mathcal C, i)) \phi_{\mathrm{Ext}(\mathcal C)}^{0^*(\mathcal C)}(X(\mathcal C, \mathcal D', i, j')) \phi_{\mathrm{Ext}(\mathcal D')}^{0} (X_+(\mathcal D', j'), G) \\
		&\qquad \times \phi_{B(n)}^\xi (X_-(\mathcal C', i')) \phi_{\mathrm{Ext}(\mathcal C')}^{0^*(\mathcal C')}(X(\mathcal C', \mathcal D, i', j)) \phi_{\mathrm{Ext}(\mathcal D)}^{0} (X_+(\mathcal D, j), E_{10k+5}, G).
	\end{align*}
	Summing over $\mathcal C, \mathcal D, i, j, \mathcal C', \mathcal D', i', j'$, by the domain Markov property,
	\begin{align*}
		\eqref{eq:dom-mar-multiplied} > & c_9^{-1} \sum_{\substack{\mathcal C, \mathcal D, i, j \\ \mathcal C', \mathcal D', i', j'}} \Big[ \phi_{B(n)}^\xi (F, X_-(\mathcal C, i)) \phi_{\mathrm{Ext}(\mathcal D')}^{0} (X_+(\mathcal D', j'), G) \phi_{\mathrm{Ext}(\mathcal C)}^{0^*(\mathcal C)}(X(\mathcal C, \mathcal D', i, j')) \\
		&\qquad \times \phi_{B(n)}^\xi (X_-(\mathcal C', i')) \phi_{\mathrm{Ext}(\mathcal D)}^{0} (X_+(\mathcal D, j), E_{10k+5}, G) \phi_{\mathrm{Ext}(\mathcal C')}^{0^*(\mathcal C')}(X(\mathcal C', \mathcal D, i', j))\Big] \\
		= & c_9^{-1} \phi_{B(n)}^\xi (\hat{\mathfrak{C}}_k, A_3(2^{L}), F, G) \phi_{B(n)}^\xi (E_{10k+5}, \hat{\mathfrak{C}}_k, A_3(2^{L}), G).
	\end{align*}
	Dividing both sides by $\phi_{B(n)}^\xi (\hat{\mathfrak{C}}_k, A_3(2^{L}), G) \phi_{B(n)}^\xi (\hat{\mathfrak{C}}_k, A_3(2^{L}), F, G)$, we have \eqref{eq:decouple-f} with $c_8= c_9^{-1}$.
	
	From \eqref{eq:decouple-f}, we can remove the conditioning on $G$ using a nearly identical argument.
\end{proof}

Although the above proof is formally similar to the proof of Lemma 4.4 in \cite{DHS21}, it heavily relies on the domain Markov property, so the choice of the domain and the order of application are crucial.


\section{Arm Separation for the Random Cluster Model} \label{sec:ext-sep}

As indicated in the proof of Lemma \ref{lemma:decoupling}, \eqref{eq:dam-sap-comp} depends on the following two strong arm separation lemmas in combination with the gluing constructions explained in Section \ref{sec:gluing}. 

\begin{Lemma}[External Arm Separation] \label{lemma:ext-arm-sep}
	Fix an integer $m\ge 2$ and let $n_1 \le n_2 -3$. Consider an open circuit $\mathcal C$ in $B(2^{n_1})$ with $m$ defects $e_1, \dots, e_m$.
	Let $\mathcal A(\mathcal C, 2^{n_2})$ be the event that 
	\begin{enumerate}[$(1)$]
		\item there are $2m$ alternating disjoint open arms and dual-closed arms from $\mathcal C$ to $\partial B(2^{n_2})$ in $B(2^{n_2})\setminus \mathrm{Int}(\mathcal C)$;
		\item the $m$ dual-closed paths emenate from $e_j^*$ to $\partial B(2^{n_2})^*$, respectively.
	\end{enumerate}
	We note that the locations of the defects $e_1, \dots, e_m$ are implicit in the notation of $\mathcal C$.
	Let  $\tilde{\mathcal A}(\mathcal C, 2^{n_2})$ be the event that $\mathcal A(\mathcal C, 2^{n_2})$ occurs with $2m$ arms $\gamma_1, \dots, \gamma_{2m}$ (open, dual-closed alternatingly) whose endpoints in $\partial B(2^{n_2})$ or $\partial B(2^{n_2})^*$, $f_1, \dots, f_{2m}$, satisfy
	\begin{equation*}
		2^{-n_2} \min_{k\neq l}|f_k - f_l|  \ge \frac1{2m}.
	\end{equation*}
	Then, there is a constant $c_{10}(m)>0$ independent of $n_1, n_2, \mathcal C$, and the boundary condition $\xi$ such that 
	\begin{equation} \label{eq:ext-arm-sep}
		\phi_{B(2^{n_2})}^\xi (\mathcal A(\mathcal C,2^{n_2})) \le c_{10}(m) \phi_{B(2^{n_2})}^{\xi'} (\tilde {\mathcal A}(\mathcal C,2^{n_2}))
	\end{equation}
	for some boundary condition $\xi'$ on $B(2^{n_2})$.
\end{Lemma}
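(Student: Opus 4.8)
The separation must be produced near $\partial B(2^{n_2})$ only: the open circuit $\mathcal C$, its defects $e_1,\dots,e_m$, and their locations lie inside $B(2^{n_1})$ and are fixed. We may assume $\phi_{B(2^{n_2})}^\xi(\mathcal A(\mathcal C,2^{n_2}))>0$, so $2^{n_1}$ is large enough to carry $2m$ disjoint arms leaving $\mathcal C$. Put $r=2^{n_1+2}$; since $n_2-n_1\ge 3$ we have $r\le 2^{n_2-1}$. The plan is in three stages: \emph{(i)} detach from $\mathcal C$ across the bounded-modulus annulus $\mathrm{Ann}(2^{n_1},r)$, passing to the ordinary alternating $2m$-arm event $A:=A_{2m}(r,2^{n_2})$ in the standard annulus; \emph{(ii)} separate $A$ near $\partial B(2^{n_2})$ using the arm-event technology of \cite{DMT20}; \emph{(iii)} re-attach to $\mathcal C$ via the gluing construction of Section~\ref{sec:gluing}.

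For stage (i), the domain Markov property gives $\phi_{B(2^{n_2})}^\xi(A_{2m}(r,2^{n_2}))=\E_{B(2^{n_2})}^\xi\big[\phi_{B(2^{n_2})\setminus B(r)}^{\zeta}(A_{2m}(r,2^{n_2}))\big]$, where $\zeta$ is the (random) boundary condition induced on $\partial B(r)$ and the outer boundary condition is $\xi$. Since a $k$-arm probability across a fixed annulus is comparable to its free-inner-boundary value uniformly over boundary conditions (a consequence of \eqref{eq:quad-rsw} and quasi-multiplicativity), and since on $\mathcal A(\mathcal C,2^{n_2})$ restricting the $2m$ arms to $B(2^{n_2})\setminus B(r)$ realizes $A_{2m}(r,2^{n_2})$, one gets $\phi_{B(2^{n_2})}^\xi(\mathcal A(\mathcal C,2^{n_2}))\le C(m)\,\phi_{B(2^{n_2})\setminus B(r)}^{0}(A)$, where $0$ abbreviates ``free on $\partial B(r)$, $\xi$ on $\partial B(2^{n_2})$''. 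For stage (ii), localization of arm events \cite[Proposition 6.5]{DMT20}, applied uniformly over boundary conditions, shows $A$ is comparable to the event that $A$ holds with the $2m$ arms landing, in the prescribed cyclic order, in $2m$ fixed well-separated arcs of $\partial B(r)$ and $2m$ fixed well-separated arcs of $\partial B(2^{n_2})$; taking the outer arcs of length $\asymp 2^{n_2}/m$ with mutual gaps at least $2^{n_2}/(2m)$ already forces $\min_{k\ne l}|f_k-f_l|\ge 2^{n_2}/(2m)$, and one application of the FKG inequality and RSW inside $2m$ disjoint boxes around the inner arcs installs, at a cost depending only on $m$, a $\delta$-fence at each inner endpoint on $\partial B(r)$. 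Writing $A^{\mathrm{sep}}$ for the resulting event, $\phi_{B(2^{n_2})\setminus B(r)}^{0}(A)\le C(m)\,\phi_{B(2^{n_2})\setminus B(r)}^{0}(A^{\mathrm{sep}})$.

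For stage (iii), condition (under $\phi_{B(2^{n_2})}^\xi$) on the configuration outside $B(r)$ realizing $A^{\mathrm{sep}}$; by the domain Markov property the conditional law in $B(r)\setminus\mathrm{Int}(\mathcal C)$ is again random-cluster with some inner boundary condition, and it suffices to show that, with probability at least $c(m)>0$ uniformly in this data, there are $2m$ disjoint colour-alternating paths in $B(r)\setminus\mathrm{Int}(\mathcal C)$ joining the inner fences on $\partial B(r)$ to the $m$ open arcs of $\mathcal C$ and to the duals $e_1^*,\dots,e_m^*$ of the defects, in the prescribed cyclic order. With the outer data fenced and well-separated, this is exactly the gluing construction of Proposition~\ref{prop:gluing} applied in the bounded-modulus region $B(r)\setminus\mathrm{Int}(\mathcal C)$: one fixes $2m$ disjoint corridors of bounded aspect ratio in the correct cyclic order --- the only topological input, valid because the $2m$ disjoint arms witnessing $\mathcal A(\mathcal C,2^{n_2})$ already traverse this region in that order (when several defects of $\mathcal C$ lie within $O(1)$ lattice distance, the corridors are simply taken thin over an $O(1)$-length initial stretch, handled by half-plane RSW near $\mathcal C$, with the dual-closed corridors started at the $e_j^*$) --- and then fills each corridor with the relevant monochromatic connection via the ordinary FKG inequality and RSW. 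On this event $\tilde{\mathcal A}(\mathcal C,2^{n_2})$ holds, only edges inside $B(r)$ being used beyond the fixed outer configuration. Recombining the three stages by the domain Markov property gives \eqref{eq:ext-arm-sep} with $c_{10}(m)$ depending on $m$ alone and $\xi'=\xi$.

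The step I expect to be the main obstacle is the combination of stages (ii) and (iii): lacking the generalized FKG inequality and Reimer's inequality, one cannot establish the separation of $A$ in the standard annulus by the classical square-root-trick arguments, so it has to be imported from \cite{DMT20} and then transferred across the rough and possibly degenerate inner boundary $\mathcal C$ --- whose defects may be clustered and whose induced boundary condition is arbitrary --- using only RSW, the ordinary FKG inequality, and carefully ordered applications of the domain Markov property, which is precisely the purpose of the gluing construction of Section~\ref{sec:gluing}. Secondary care is needed to ensure that every constant produced --- in the uniform-over-boundary-condition arm comparisons, in localization, and in the fence and corridor constructions --- depends on $m$ only, and not on $n_1$, $n_2$, $\mathcal C$, or $\xi$.
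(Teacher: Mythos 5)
Your three-stage plan (detach from $\mathcal C$, separate the standard $2m$-arm event in $\mathrm{Ann}(r,2^{n_2})$, re-attach by gluing) has a genuine gap at stage (iii), and it is fatal to the uniformity in $\mathcal C$ that the lemma requires. After stage (i) you have replaced $\phi_{B(2^{n_2})}^\xi(\mathcal A(\mathcal C,2^{n_2}))$ by the much larger quantity $\phi(A_{2m}(r,2^{n_2}))$, which carries no information about the circuit. To close the chain you must then show that, conditionally on the separated outer data, the probability of reconnecting the $2m$ fences on $\partial B(r)$ to the arcs of $\mathcal C$ and to the specific dual edges $e_1^*,\dots,e_m^*$ is at least $c(m)>0$ uniformly over all circuits $\mathcal C$ and all defect placements. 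This is false: if two defects lie at mutual distance $d\ll 2^{n_1}$ (or the circuit has a bottleneck at scale $d$), then any configuration realizing $\tilde{\mathcal A}(\mathcal C,2^{n_2})$ forces a multi-arm event from scale $d$ to scale $2^{n_1}$, so the reconnection probability decays polynomially in $2^{n_1}/d$ rather than being bounded below by a constant. Your parenthetical fix --- thin corridors over an ``$O(1)$-length initial stretch'' handled by half-plane RSW --- does not address this, since the clustering scale $d$ can be anything between $O(1)$ and $2^{n_1}$, and a crossing of a corridor of aspect ratio $2^{n_1}/d$ costs exponentially in that ratio. Nor can you appeal to Proposition \ref{prop:gluing} here: its hypotheses explicitly require the attachment points to be $10\delta n_2$-separated, which is exactly what fails in the degenerate cases the lemma must cover.

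The paper avoids this by never discarding the connection to $\mathcal C$. It first performs a purely topological ``extension'' of $\mathcal C$ to $\partial B(2^{n_1})$ (via the extremal dual-closed paths $\alpha_i^{l},\alpha_i^r$ and open paths $\beta_i^l,\beta_i^r$), converting the problem into a comparison of $\mathcal A(R)$ with $\tilde{\mathcal A}(R)$ for arms emanating from $2m$ marked points $x_1,\dots,x_{2m}$ on $\partial B(2^{n_1})$. It then builds a multi-scale hierarchy of disjoint ``level-$j$ annuli'' centered on $\partial B(2^{n_1})$ around each cluster of these points, so that every critical scale at which $j+1$ of the points are close contributes a $j$-arm crossing to \emph{both} sides of the comparison; the separation is then done annulus by annulus (Lemma \ref{lemma:sep-fixed-scale}) and recombined with the domain Markov property. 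In other words, the small factors caused by clustering are retained in the upper bound rather than thrown away in stage (i), which is what makes the final constant depend on $m$ alone. Your outer-boundary separation in stage (ii) is fine and close in spirit to what the level-$2m$ annulus accomplishes, but the treatment of the inner boundary needs the full multi-scale argument.
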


\begin{Lemma}[Internal Arm Separation] \label{lemma:int-arm-sep}
	Fix an integer $m\ge 2$ and let $n_3 +3 \le n_4$. Consider an open circuit $\mathcal D$ in $B(2^{n_4})^c$ with $m$ defects $g_1, \dots, g_m$.
	Let $\mathcal B(2^{n_3}, \mathcal D)$ be the event that 
	\begin{enumerate}[$(1)$]
		\item there are $2m$ alternating disjoint open arms and dual-closed arms from $\partial B(2^{n_3})$ to $\mathcal D$ in $\mathrm{Int}(\mathcal D)\setminus B(2^{n_3})$;
		\item the $m$ dual-closed paths emenate from $g_j^*$ to $\partial B(2^{n_3})^*$, respectively.
	\end{enumerate}
	Let  $\tilde{\mathcal B}(2^{n_3},\mathcal D)$ be the event that $\mathcal B(2^{n_3}, \mathcal D)$ occurs with $2m$ arms $\gamma_1, \dots, \gamma_{2m}$ (open, dual-closed alternatingly) whose endpoints in $\partial B(2^{n_3})$ or $\partial B(2^{n_3})^*$, $h_1, \dots, h_{2m}$, satisfy
	\begin{equation*}
		2^{-n_3} \min_{k\neq l}|h_k - h_l|  \ge \frac1{2m}.
	\end{equation*}
	Then, there is a constant $c_{11}(m)>0$ independent of $n_3, n_4, \mathcal D$, and the boundary condition $\xi$ such that 
	\begin{equation} \label{eq:ext-arm-sep}
		\phi_{\mathrm{Int}(\mathcal D)}^\xi (\mathcal B(2^{n_3}, \mathcal D)) \le c_{11}(m) 	\phi_{\mathrm{Int}(\mathcal D)}^{\xi'} (\tilde {\mathcal B}(2^{n_3}, \mathcal D))
	\end{equation}
	for some boundary condition $\xi'$ on $\mathcal D$.
\end{Lemma}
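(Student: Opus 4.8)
The plan is to prove Lemma~\ref{lemma:int-arm-sep}; Lemma~\ref{lemma:ext-arm-sep} is the same statement with the roles of ``inside'' and ``outside'' exchanged (there the inner boundary is a circuit and one separates arms at the outer circle $\partial B(2^{n_2})$, here the outer boundary is a circuit and one separates at the inner circle $\partial B(2^{n_3})$), so it follows from the same argument, and I will carry out only the internal case. The strategy is the classical arm-separation scheme for critical Bernoulli percolation (Kesten; Nolin; see \cite{DS11} and references therein), executed with the substitutions forced by the random cluster setting: quad-crossing RSW in place of the usual RSW theorem, quasi-multiplicativity in place of independence across scales, the FKG inequality --- used only for monochromatic connection events, which are increasing (open) or decreasing (dual-closed) --- in place of generalized FKG and of BK, and, crucially, the gluing construction of Proposition~\ref{prop:gluing} in place of the generalized-FKG gluing. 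One structural simplification is available: since $\tilde{\mathcal B}$ imposes nothing at $\mathcal D$ and the dual-closed arms stay pinned at the defects $g_j^*$, only the $2m$ endpoints on the inner circle $\partial B(2^{n_3})$ must be separated, and no part of the configuration near $\mathcal D$ is touched; this is ``one side'' of a two-sided separation lemma.

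First I would reduce to the circle-to-circle localization lemma of \cite[Proposition~6.5]{DMT20} (the same input already used in the proof of Claim~\ref{claim:reimer}), which is itself an RSW $+$ FKG fact and which localizes arm endpoints at a circle into $2m$ prescribed, disjoint, correctly-ordered, appropriately-colored arcs at a multiplicative cost depending only on the number of arms and the relative arc length. Fix a constant $T = T(m)$ and assume for now $n_4 - n_3 \ge T$ (the bounded case is disposed of at the end). Split $\mathrm{Int}(\mathcal D) \setminus B(2^{n_3})$ into $\mathrm{Ann}(2^{n_3}, 2^{n_3+T})$ and $\mathrm{Int}(\mathcal D) \setminus B(2^{n_3+T})$. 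On $\mathcal B(2^{n_3}, \mathcal D)$ the arms cross both regions, so with $\mathcal B_{\mathrm{in}} := A_{2m,\sigma}(2^{n_3}, 2^{n_3+T})$ and $\mathcal B_{\mathrm{out}}$ the event that $2m$ alternating arms, the $m$ dual ones from the $g_j^*$, run from $\partial B(2^{n_3+T})$ to $\mathcal D$ inside $\mathrm{Int}(\mathcal D) \setminus B(2^{n_3+T})$, one has $\phi_{\mathrm{Int}(\mathcal D)}^\xi(\mathcal B) \le \phi_{\mathrm{Int}(\mathcal D)}^\xi(\mathcal B_{\mathrm{out}})$. Choosing $2m$ arcs $I_1, \dots, I_{2m}$ on $\partial B(2^{n_3})$ in the prescribed cyclic order, each of length $\asymp 2^{n_3}/m$ and with consecutive arcs separated by more than $2^{n_3}/(2m)$, together with $2m$ arcs $J_1, \dots, J_{2m}$ on $\partial B(2^{n_3+T})$ of comparable relative size, the localization lemma gives (writing superscripts $I,J$ for the localized events) $\phi(\mathcal B_{\mathrm{in}}^{I,J}) \ge c(m)\,\phi(\mathcal B_{\mathrm{in}})$ and, applied at $\partial B(2^{n_3+T})$ only, $\phi(\mathcal B_{\mathrm{out}}^{J}) \ge c(m)\,\phi(\mathcal B_{\mathrm{out}})$, with the outer arms color-matched to land on the same $J_\ell$.

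Next I would paste these two localized events across a dyadic buffer annulus, which is where Proposition~\ref{prop:gluing} does its work. Re-splitting with a buffer $\mathrm{Ann}(2^{s}, 2^{s+1})$ strictly inside $\mathrm{Ann}(2^{n_3}, 2^{n_3+T})$, take $E$ in Proposition~\ref{prop:gluing} to be $\mathcal B_{\mathrm{in}}^{I,J}$ (whose arms reach $\partial B(2^s)$ at $2m$ well-separated, correctly-colored points with the required outward stubs) and $F$ to be $\mathcal B_{\mathrm{out}}^{J}$ (whose arms reach $\partial B(2^{s+1})$ at $2m$ well-separated, color-matched points with inward stubs); Proposition~\ref{prop:gluing} then produces $2m$ disjoint color-matched connections across $\mathrm{Ann}(2^s, 2^{s+1})$ joining the two families of ends at multiplicative cost $c(m) > 0$. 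Concatenating the three pieces yields a configuration in $\tilde{\mathcal B}(2^{n_3}, \mathcal D)$: the arms are disjoint (they occupy essentially disjoint annular regions and are disjoint within each), they alternate in color, they run from $\partial B(2^{n_3})$ to $\mathcal D$ with the dual ones emanating from the $g_j^*$, and their $\partial B(2^{n_3})$-endpoints lie in the $I_\ell$, hence are more than $2^{n_3}/(2m)$-separated. Finally, $E$ and $F$ depend on disjoint edge sets (the stubs are short enough to stay on opposite sides of a thin gap annulus), so conditioning on the configuration outside a slightly enlarged ball around $B(2^s)$ determines $F$ and induces some boundary condition in the interior, under which $\phi(E \mid \cdot) \ge c(m)$ by an RSW $+$ FKG construction in the bounded-modulus region $\mathrm{Ann}(2^{n_3}, 2^s)$, uniformly in that boundary condition; hence $\phi(E \cap F) \ge c(m)\,\phi(F) \ge c(m)\,\phi(\mathcal B_{\mathrm{out}}) \ge c(m)\,\phi(\mathcal B)$. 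Chaining everything, $\phi_{\mathrm{Int}(\mathcal D)}^\xi(\tilde{\mathcal B}(2^{n_3},\mathcal D)) \ge c_{11}(m)\,\phi_{\mathrm{Int}(\mathcal D)}^\xi(\mathcal B(2^{n_3},\mathcal D))$; the freedom in the boundary condition $\xi'$ in the statement absorbs whatever condition the localization exploration induces on $\mathcal D$, and since every RSW, localization and quasi-multiplicativity input invoked from \cite{DMT20} is uniform over boundary conditions, its precise value is immaterial. The remaining case $n_4 - n_3 < T$ is handled by performing the same construction entirely inside $\mathrm{Ann}(2^{n_3}, 2^{\,n_3+1}) \subseteq \mathrm{Int}(\mathcal D)$, routing the $2m$ corridors with aspect ratio $\asymp m$, which are still crossed with probability $\ge c(m) > 0$ by quad-crossing RSW.

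The main obstacle is exactly the absence of the generalized FKG inequality in the gluing step. In the Bernoulli case one builds the separated configuration by intersecting, over a bounded number of dyadic scales, the events ``the arms are already passably separated at this scale'' together with a handful of half-circuit and short-connection events, all freely combined via generalized FKG. Here every piece of that construction must be recast as a genuine monochromatic connection event supported on a region disjoint from the others, so that the plain FKG inequality applies, and information is transported between scales only through the domain Markov property and Proposition~\ref{prop:gluing}; the secondary difficulty, keeping the boundary conditions induced by exploration under control, is what makes the boundary-uniformity of the \cite{DMT20} estimates essential throughout and is also the reason the statement only asserts the comparison for \emph{some} boundary condition $\xi'$.
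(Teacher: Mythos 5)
Your reduction has a circularity at its core. The step you cannot get for free is the claim $\phi(\mathcal B_{\mathrm{out}}^{J}) \ge c(m)\,\phi(\mathcal B_{\mathrm{out}})$, i.e.\ that the arms running from $\partial B(2^{n_3+T})$ out to the defects $g_1^*,\dots,g_m^*$ of $\mathcal D$ can be localized (in particular, separated) at $\partial B(2^{n_3+T})$ at constant multiplicative cost. The localization result you invoke, \cite[Proposition 6.5]{DMT20}, applies to arm events between two concentric boxes; $\mathcal B_{\mathrm{out}}$ lives in the irregular domain $\mathrm{Int}(\mathcal D)\setminus B(2^{n_3+T})$ with its dual arms pinned at prescribed defects of an arbitrary circuit, and a separation statement at a square boundary for exactly this kind of event is what Lemma \ref{lemma:int-arm-sep} asserts (with $n_3$ replaced by $n_3+T$). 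So you have reduced the lemma to itself. The ``structural simplification'' you announce --- that nothing need be done near $\mathcal D$ --- is the source of the error: the difficulty is not separating endpoints \emph{on} $\mathcal D$, it is that the circuit's bottlenecks and the possibly clustered defects constrain where the arms can sit on any nearby square boundary, which is precisely what obstructs a one-shot RSW/localization argument. This is why the paper's proof (written out for the external case; the internal case mirrors it) spends most of its effort there: it first ``extends'' the circuit to the adjacent square boundary via extremal open and dual-closed paths, producing $2m$ endpoints $x_1,\dots,x_{2m}$ on that boundary, then builds the hierarchy of level-$j$ annuli at the critical scales of the configuration $\{x_i\}$, applies the fixed-scale separation statement (Lemma \ref{lemma:sep-fixed-scale}) in each half-annulus via repeated use of the domain Markov property, and only then glues. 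None of that machinery appears in your argument, and it cannot be replaced by the concentric-box localization lemma.

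A secondary problem: your treatment of the case $n_4-n_3<T$ does not work. A construction confined to $\mathrm{Ann}(2^{n_3},2^{n_3+1})$ never reaches the defects of $\mathcal D$, which may be arbitrarily far away since $\mathcal D$ is only required to surround $B(2^{n_4})$; and in any case the required bound is against $\phi(\mathcal B)$, which is not bounded below by any constant depending only on $m$. The paper's multi-scale construction is uniform over the whole range $n_3+3\le n_4$ and needs no such case split. The parts of your plan that are sound --- substituting Proposition \ref{prop:gluing} plus the domain Markov property for generalized FKG, and relying on boundary-condition uniformity of the RSW inputs to justify the arbitrary $\xi'$ --- do match the paper's strategy, but they address the easier half of the problem.
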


\begin{Remark}
	$\xi'$ arises due to a technical challenge in the proof. However, for the purpose of \eqref{eq:dam-sap-comp}, any boundary condition suffices as the RSW estimates we have are uniform in boundary conditions. 
\end{Remark}

The proofs of Lemma \ref{lemma:ext-arm-sep} and \ref{lemma:int-arm-sep} are similar, and we only provide the proof for the former. 

Arm separation techniques are classical techniques that date back to Kesten \cite{Kesten87, Nolin08}. They were first developed to show well-separatedness for arms crossing square annuli. In our case, the annulus consists of one square boundary and one circuitous boundary. The main obstacle for directly applying the classical arm separation arguments is that the geometry of the circuit may generate bottlenecks that prevent arms from being separated on certain scales. In the first part of the proof, we address this through a construction that ``leads'' the interfaces to the boundary of $B(2^{n_1})$. We note that this part of the proof for the random cluster model is identical to that of \cite[Lemma 6.2]{DS11} as the constructions are purely topological. However, we include here for the reader's convenience. The second step is to define a family of disjoint annuli in levels, which groups the arms based on their relative distances. In the following proof, the details for this step is provided last. The final part of the proof depends on an arm separation statement in each annuli defined in the previous step, for which we provide the details in Lemma \ref{lemma:sep-fixed-scale}.

We note that our proof is stated in full generality compared to the proof in \cite{DS11} which is stated for $m=2$, and therefore slightly deviates from it in notation.

\begin{proof}[Proof of Lemma \ref{lemma:ext-arm-sep}]
	
	Given the circuit $\mathcal C$ with defects $e_1, \dots, e_m$, we assume the occurrence of $\mathcal A(\mathcal C, 2^{n_2})$. The first step is to ``extend'' the circuit $\mathcal C$ to $\partial B(2^{n_1})$ so that the arms will not be tangled due to the geometry of $\mathcal C$.
	
	For $i=1,\dots, m$, $\alpha_i^l$ be the counterclockwise-most dual-closed path emenating from $e_i^*$ to $\partial B(2^{n_1}+1/2)$ in $B(2^{n_1}+1/2) \setminus \mathcal C$ and $\alpha_i^r$ the clockwise-most dual-closed path emenating from $e_i^*$ to $\partial B(2^{n_1}+1/2)$ in $B(2^{n_1}+1/2) \setminus \mathcal C$. We denote by $a_i^l$ the first vertex on $\partial B(2^{n_1})$ to the counterclockwise side of $\alpha_i^l$ and $a_i^r$ the first vertex on $\partial B(2^{n_1})$ to the clockwise side of $\alpha_i^r$. Let $\beta_i^l$ be the counterclockwise-most open path from the lower right end-vertex of $e_i$ to $a_i^r$ in $B(2^{n_1}) \setminus \mathcal C$ and $\beta_i^r$ be the clockwise-most open path from the top left end-vertex of $e_{i+1}$ to $a_{i+1}^l$ in $B(2^{n_1}) \setminus \mathcal C$. Here, the indices are cyclic, meaning that $i= i \bmod m$. 
	
	Note that it is necessary that $a_i^l \neq a_i^r$; it is possible that $a_i^r = a_{i+1}^l$, but by the assumption that $\mathcal A(\mathcal C, n_2)$ occurs, $a_{i+1}^l$ must be on the clockwise side of $a_i^r$ on $\partial B(2^{n_1})$.
	
	We identify the last intersection of $\alpha_i^l$ and $\alpha_i^r$ from $e_i$ to $\partial B(2^{n_1})$, which can possibly be $e_i$. Let $\alpha_i$ be the union of the piece of $\alpha_i^l$ from the last intersection to $\partial B(2^{n_1}+1/2)$ and the piece of $\alpha_i^r$ from the last intersection to $\partial B(2^{n_1}+1/2)$. Let $R_i$ be the domain bounded by $\alpha_i$ and the piece of $\partial B(2^{n_1})$ between $a_i^l$ and $a_i^r$ on $a_i^l$'s clockwise side. We now define a path $\beta_i$. If $\beta_i^l$ and $\beta_i^r$ intersect, we define $\beta_i$ analogously to $\alpha_i$. Otherwise, we define $\beta_i$ to be the union of the piece of $\beta_i^l$ from its last intersection with $\mathcal C$ to $\partial B(2^{n_1})$, the piece of $\beta_i^r$ from its last intersection with $\mathcal C$ to $\partial B(2^{n_1})$, and the piece of $\mathcal C$ that connects the aforementioned two pieces. Let $S_i$ be the domain bounded by $\beta_i$ and the piece of $\partial B(2^{n_1})$ between $a_i^r$ and $a_{i+1}^l$ on $a_i^r$'s clockwise side. Note that in the case $a_i^r = a_{i+1}^l$, $\beta_i$ and $S_i$ consist of only the vertex $a_i^r$.
	
	Let $R := (B(2^{n_2}) \setminus B(2^{n_1})) \cup (\cup_{i=1}^m R_i) \cup (\cup_{i=1}^m S_i)$. Note that once $\{\alpha_i, \beta_i\}_i$ is fixed, the conditional distribution of the cluster configuration inside $R$ is (uniquely) determined by the status of $\alpha_i$ and $\beta_i$. Let $\mathcal A(R)$ denote the event that
	\begin{enumerate}
		\item there is a dual-closed arm connecting $\alpha_i$ to $\partial B(2^{n_2})^*$ in $R$ for $i=1,\dots, m$;
		\item there is an open arm connecting $\beta_i$ to $\partial B(2^{n_2})$ in $R$ for $i=1, \dots, m$.
	\end{enumerate}
	Let  $\tilde{\mathcal A}(R)$ be the event that $\mathcal A(R)$ occurs with $2m$ arms $\gamma_1, \dots, \gamma_{2m}$ (dual-closed, open alternatingly) whose endpoints in $\partial B(2^{n_2})$ or $\partial B(2^{n_2})^*$, $f_1, \dots, f_{2m}$, satisfy $2^{-n_2} \min_{k\neq l}|f_k - f_l|  \ge 1/(2m)$. Lemma \ref{lemma:ext-arm-sep} is then equivalent to 
	\begin{equation*}
		\phi_{B(2^{n_2})}^\xi (\mathcal A(R)) \le c'_{10}(m) \phi_{B(2^{n_2})}^{\xi'}(\tilde{\mathcal A}(R))
	\end{equation*}
	for some boundary condition $\xi'$ and some constant $c'_{10}(m)>0$ that only depends on $m$. 
	
	\begin{figure} 
		\centering
		\includegraphics[width=0.55\textwidth]{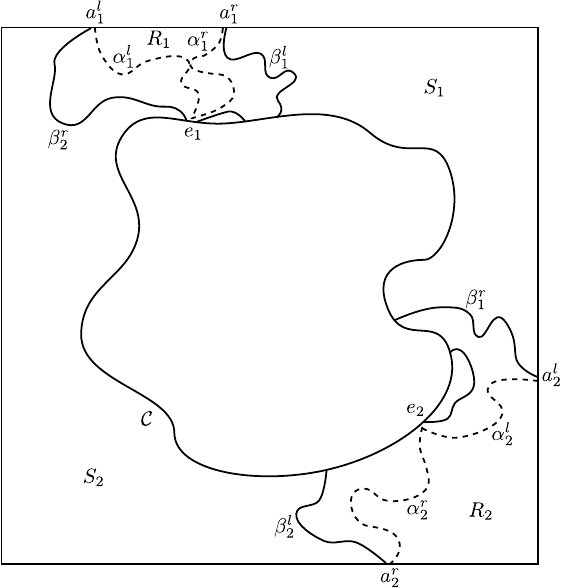}
		\caption{A representation of the construction in the first step with $m=2$. This figure is topologically equivalent to \cite[Fig. 4]{DS11}, with a relabeling.}
		\label{fig:arm-sep}
	\end{figure}
	
	Let us first relabel the vertices $a_1^l, a_1^r, \dots, a_m^l, a_m^r$ by $x_1, \dots, x_{2m}$ where $x_{2i-1} = a_i^l$ and $x_{2i} = a_i^r$ for $i=1, \dots, m$. The next step is to identify \emph{critical scales}, scales of neighborhoods of these vertices comparable to the distance beween them. We now informally introduce the notion of \emph{level-$j$ annuli} so we can finish the proof before returning to formally defining them at the end of the proof. 
	
	For $j= 1,\dots, 2m-1$, $\mathcal I_j$ is a collection of indices. $\mathcal I_j$ keeps track of groups of $j+1$ vertices among $x_1, \dots, x_{2m}$ on level $j$ and the index indicates the first vertex in a group in clockwise order. For each level $j$, the difference of two indices in $\mathcal I_j$ is at least $j+1$.
	
	Let $\mathcal L_j$ be the collection of level-$j$ annuli: $\mathcal L_j := \{\mathrm{Ann}_j (i): i\in \mathcal I_j\}$. The level-$j$ annuli $\mathrm{Ann}_j(i)$ satisfy:
	\begin{itemize}
		\item If $i \in \mathcal I_j$, that is,  $\mathrm{Ann}_j(i)$ is nonempty, then $\mathrm{Ann}_j(i)$ is centered on $\partial B(2^{n_1})$ and both its inner box and outer box enclose exactly $j+1$ vertices $x_i, x_{i+1}, \dots, x_{i+j}$. That is, $\mathrm{Ann}_j(i)$ is crossed by $j$ arms.
		\item Level-$j$ annuli are mutually disjoint and disjoint from annuli of other levels.
		\item There is exactly one level-$2m$ annulus and at most $\lfloor \frac{2m}{j+1} \rfloor$ (and possibly zero) level-$j$ annuli.
		\item All level-$j$ annuli are contained in $B(2^{n_1+1})$, for $j=1, \dots, 2m-1$. The level-$2m$ annulus is $\mathrm{Ann}_{2m}(1) = \mathrm{Ann}(2^{n_1+1}, 2^{n_2})$.
	\end{itemize}
	
	$\mathcal A(R)$ implies the simultaneous occurrence of crossings in each of the annuli defined above intersected with the domain $R$, which can cause the annuli to have irregular boundaries. However, since all annuli (excluding the level-$2m$ annulus) are centered on $\partial B(2^{n_1})$ and the boundary of $R$ (the $\alpha_i$ and $\beta_i$) are in the interior of $B(2^{n_1})$, each annulus $\mathrm{Ann}_j(i)$ intersected with $R$ necessarily contains one of the top-, bottom-, left-, or right-half of $\mathrm{Ann}_j(i)$. We call the half annulus $\mathrm{Ann}_j^\mathrm{h}(i)$. If there are two choices, choose the top or bottom over the left or right.
	
	For $j=1, \dots, 2m-1$, let $E_j(i)$ be the event that there exist $j$ disjoint crossings in $\mathrm{Ann}_j^\mathrm{h}(i)$ such that the color of each crossing is determined by the vertices the annulus encloses. In particular, let $E_{2m}$ be the event that $\mathrm{Ann}_{2m}$ is crossed by $2m$ disjoint alternating open and dual-closed crossings. Then $\mathcal A (R)$ implies the occurence of $\cap_{j=1}^{2m} \cap_{i\in \mathcal I_j} E_j(i)$. By repeatedly applying the domain Markov property, we have
	\begin{align} \label{eq:irr-decomp}
		\phi_{B(2^{n_2})}^\xi (\mathcal A(R)) \le \phi_{B(2^{n_2})}^\xi (\cap_{j=1}^{2m} \cap_{i\in \mathcal I_j} E_j(i)) = \prod_{j=1}^{2m}\prod_{i\in \mathcal I_j} \E_{B(2^{n_2})}^\xi \left[ \phi_{D_{j,i}}^{\xi_{j,i}} (E_j(i)) \right],
	\end{align}
	where $D_{j,i} = \cup_{k=1}^{j-1} (\cup_{D \in \mathcal L_k} D) \cup (\cup_{k=1}^i \mathrm{Ann}_j(k))$, that is, $D_j$ is the union of all annuli up to level $j-1$ union the union of all annuli in on level-$j$ up to index $i$. The exception is $D_{2m} = B(2^{n_2})$. And $\xi_{j,i}$ is the random variable of boundary conditions on $\partial D_{j,i}$ induced by conditioning on the outside. $\xi_{2m} = \xi$. Note that $D_{j_1, i_1} \subset D_{j_2, i_2}$ if $j_1 < j_2$ or if $j_1 = j_2$ and $i_1<i_2$.
	
	Let $\tilde{E}_j(i)$ be the event that $E_j(i)$ occurs and the exit points of the crossings are separated, that is the distance between any two exit points are at least  $\delta/2m$ times the length of the boundary of the box they are on, for some $\delta > 1/8$. The following lemma is an arm separation statement that compares the separated event to the regular arm event.
	\begin{Lemma}\label{lemma:sep-fixed-scale}
		For any $i,j,\xi$, there is a $c_{12} = c_{12}(m)>0$ such that for any $j=1, \dots, 2m$,
		\begin{equation} \label{eq:sep-fxied-scale}
			\phi_{D_{j,i}}^\xi (E_j(i)) \le c_{12} \phi_{D_{j,i}}^\xi (\tilde{E}_j(i)).
		\end{equation}
	\end{Lemma}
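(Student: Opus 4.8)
The plan is to run the classical Kesten-type separation-of-arms argument inside the single half-annulus $\mathrm{Ann}_j^{\mathrm h}(i)$, with two substitutions forced by the random cluster setting: every use of independence is replaced by the domain Markov property, and every use of the generalized FKG inequality is replaced by the one-color FKG inequality together with the tower-rule peeling already carried out in the proof of Proposition \ref{prop:gluing}. As a first reduction I would subdivide $\mathrm{Ann}_j^{\mathrm h}(i)$ dyadically into $O(1)$ sub-annuli of bounded modulus. Since $E_j(i)$ forces $j$ disjoint crossings of the prescribed colors across each sub-annulus, quasi-multiplicativity identifies $\phi_{D_{j,i}}^\xi(E_j(i))$, up to constants, with the product of the crossing probabilities of the pieces, while the gluing construction (Proposition \ref{prop:gluing}, or rather the argument behind it) reassembles \emph{separated} crossings of the pieces into a separated crossing of all of $\mathrm{Ann}_j^{\mathrm h}(i)$. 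Hence it suffices to treat a bounded-modulus half-annulus and to ask only that the $j$ exit points on the outer boundary be pairwise $\tfrac{\delta}{2m}$-separated; the inner boundary is symmetric.

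So fix $A=\mathrm{Ann}^{\mathrm h}(x_i;\rho,2\rho)$ and split it as $\mathrm{Ann}^{\mathrm h}(x_i;\rho,\tfrac32\rho)\cup\mathrm{Ann}^{\mathrm h}(x_i;\tfrac32\rho,2\rho)$. On $E_j(i)$ there are $j$ disjoint $\sigma$-colored crossings of the inner sub-annulus; by an interface exploration from the inner side, handling the two colors as in \cite{Kesten87,Nolin08,DS11}, one isolates a canonical such family together with the part of the configuration it cuts off toward $\partial B(x_i,\rho)$. Conditionally on this data, the domain Markov property yields a random cluster measure on the unexplored region $V\supset\mathrm{Ann}^{\mathrm h}(x_i;\tfrac32\rho,2\rho)$, and $V$ meets $\partial B(x_i,\tfrac32\rho)$ at $j$ marked points $p_1,\dots,p_j$ carrying the colors $\sigma_1,\dots,\sigma_j$ in cyclic order.

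It then remains to show that, uniformly over $V$ and over the positions of $p_1,\dots,p_j$, the conditional measure gives probability at least $c(m)$ to the existence of disjoint $\sigma_r$-colored paths from each $p_r$ to prescribed, once-and-for-all $\tfrac1{2m}$-separated targets $q_1,\dots,q_j$ on $\partial B(x_i,2\rho)$. To do this I would build $j$ disjoint topological tubes in $V$: each starts with a short funnelling piece near $p_r$ --- a bounded stack of alternating-color half-circuits around $p_r$, exactly as in the half-circuit step of the proof of Proposition \ref{prop:gluing} --- to pull possibly clustered $p_r$'s apart, and then a corridor of bounded aspect ratio leading to $q_r$. There are only finitely many cyclic color patterns and, after funnelling, finitely many combinatorial tube layouts compatible with a fixed $\delta>1/8$, so the number of cases is controlled by $m$ alone. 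For a fixed layout, peel the tubes off one at a time with the tower rule as in Proposition \ref{prop:gluing}: conditioning on the configuration outside the first tube, a $\sigma_1$-crossing of it is a single-color connection event, so one-color FKG together with the quad-crossing RSW estimate \eqref{eq:quad-rsw} bounds its probability below, uniformly in the revealed boundary condition; iterating over $r=2,\dots,j$ gives the claim. Reading this back through the domain Markov decomposition of the previous paragraph and the quasi-multiplicativity/gluing reduction of the first paragraph yields \eqref{eq:sep-fxied-scale} with $c_{12}$ depending only on $m$.

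The main obstacle is twofold: first, setting up the interface exploration so that ``the canonical $\sigma$-colored crossings'' is genuinely well-defined with two colors present and the boundary condition induced through the domain Markov property is exactly as claimed --- this is more delicate than in the single-color percolation references; and second, designing the funnelling and tube construction robustly enough that arbitrarily clustered landing points $p_r$ can always be separated in an $m$-dependent but otherwise universal number of RSW and half-circuit steps, so that the final constant depends only on $m$ and not on $j$, the modulus of the annulus, or the geometry of the domain $R$.
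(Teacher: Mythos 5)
Your overall route matches the paper's: the paper disposes of Lemma \ref{lemma:sep-fixed-scale} in one line, declaring it a classical RSW--FKG separation argument adapted to half-annuli and deferring to the proof of \cite[Proposition 5.6]{CDH16}; your sketch is essentially an unpacking of that classical argument with independence replaced by the domain Markov property and generalized FKG replaced by single-color FKG plus tower-rule peeling, which are exactly the substitutions the paper makes elsewhere (e.g.\ in Proposition \ref{prop:gluing}). The dyadic/quasi-multiplicative reduction, the canonical choice of crossings via extremal interfaces, and the one-color-at-a-time peeling of disjoint corridors are all sound and consistent with the cited proof.

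There is, however, one step in your plan that fails as stated, and it is precisely the point you flag as unresolved. You propose to separate the marked points $p_1,\dots,p_j$ on the intermediate boundary by a \emph{bounded} stack of half-circuits and corridors, with the number of RSW steps controlled by $m$ alone. This cannot be done uniformly: if two landing points sit at distance $\epsilon\rho$ with $\epsilon$ arbitrarily small, no $m$-dependent number of constant-probability RSW events at scale $\rho$ will pry them apart to distance $\rho/(2m)$. The classical resolution --- and the actual content of \cite[Proposition 5.6]{CDH16} --- is a multi-scale \emph{relative quality} iteration. One needs two inputs: a ``wire-out'' estimate, saying that quality at least $\rho_0$ at one dyadic scale can be upgraded to quality at least $1/(2m)$ at the next scale at constant cost (this is where your corridor/funnel construction with FKG and \eqref{eq:quad-rsw} belongs); and a ``not-too-close'' estimate (the analogue of \cite[Lemma 5.9]{CDH16}), saying that conditional on the arms reaching a given scale, the quality there is positive but below $\rho_0$ with probability at most $\delta(\rho_0)$, small when $\rho_0$ is small. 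The final constant depending only on $m$ comes from summing the resulting geometric series over the unboundedly many scales at which clustering could occur; without the second input your argument produces no uniform bound. Incorporating that iteration (run inside the half-annulus, with domain Markov supplying the conditional measures at each scale) would close the gap and bring your proof in line with the argument the paper cites.
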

	\begin{proof}
		This a classical result using RSW and FKG estimates except on half-annuli. Nonetheless, all parts of the classical argument apply. We refer the reader to the proof of \cite[Proposition 5.6]{CDH16}. 
	\end{proof}
	
	Applying Lemma \ref{lemma:sep-fixed-scale} to each probability in the RHS of \eqref{eq:irr-decomp} and then the domain Markov property and we have
	\begin{align*}
		\prod_{j=1}^{2m} \prod_{i\in \mathcal I_j} & \E_{B(2^{n_2})}^\xi \left[ \phi_{D_{j,i}}^{\xi_{j,i}} (E_j(i)) \right] \\
		&\le \tilde c_{12} \prod_{j=1}^{2m} \prod_{i\in \mathcal I_j} \E_{B(2^{n_2})}^\xi \left[\phi_{D_{j,i}}^{\xi_{j,i}} (\tilde E_j(i))\right] \\
		&= \tilde c_{12} \E_{B(2^{n_2})}^\xi \left[\phi_{D_{1,i_1}}^{\xi_{1,i_1}}(\tilde{E}_1(i_1))\right] \E_{B(2^{n_2})}^\xi \left[ \phi_{D_{1,i_2}}^{\xi_{1,i_2}} (\tilde E_1(i_2)) \right] \\
		&\qquad \quad \times \prod_{k=3}^{|\mathcal I_1|} \E_{B(2^{n_2})}^\xi \left[\phi_{D_{1,i_k}}^{\xi_{1,i_k}} (\tilde E_1(i_k))\right] \prod_{j=2}^{2m} \prod_{i\in \mathcal I_j} \E_{B(2^{n_2})}^\xi \left[\phi_{D_{j,i}}^{\xi_{j,i}} (\tilde E_j(i))\right] \\
		&= \tilde c_{12} \E_{B(2^{n_2})}^\xi \left[\phi_{D_{1, i_2}}^{\xi_{1, i_2}'} (\tilde{E}_1(i_1) \cap \tilde E_1(i_2))\right] \\
		&\qquad \quad \times \prod_{k=3}^{|\mathcal I_1|} \E_{B(2^{n_2})}^\xi \left[ \phi_{D_{1,i_k}}^{\xi_{1,i_k}} (\tilde E_1(i_k)) \right] \prod_{j=2}^{2m} \prod_{i\in \mathcal I_j} \E_{B(2^{n_2})}^\xi \left[ \phi_{D_{j,i}}^{\xi_{j,i}} (\tilde E_j(i))\right] \\
		& \cdots \\
		&= \tilde c_{12} \E_{B(2^{n_2})}^\xi \left[\phi_{B(2^{n_2})}^{\xi'} (\cap_{j=1}^{2m} \cap_{i\in \mathcal I_j}\tilde{E}_j(i)) \right],
	\end{align*}
	where $\xi'$ is a random variable of boundary conditions on $B(2^{n_2})$ and $\tilde c_{12}$ is some power of $c_{12}$. 

	It remains to ``glue'' the crossings that occur in the $\tilde{E}_j$ events together so that $\tilde{\mathcal A}(R)$ occurs, which we refer to Section \ref{sec:gluing} for details on the gluing constructions. We make a special note that connecting a crossing in the inner-most annulus inward to the boundary of $R$($\alpha_i$ or $\beta_i$) has a constant cost due to RSW. Then, there exists $c$ that depends only on $m$ such that for any arbitrary boundary condition $\xi'$,
	\begin{align*}
		\phi_{B(2^{n_2})}^{\xi'} (\cap_{j=1}^{2m} \tilde{E}_j) \le c \phi_{B(2^{n_2})}^{\xi'} (\tilde{\mathcal A}(R))
	\end{align*}  
	as desired.
	

	\begin{figure}
		\centering
		\includegraphics[width=0.67\textwidth]{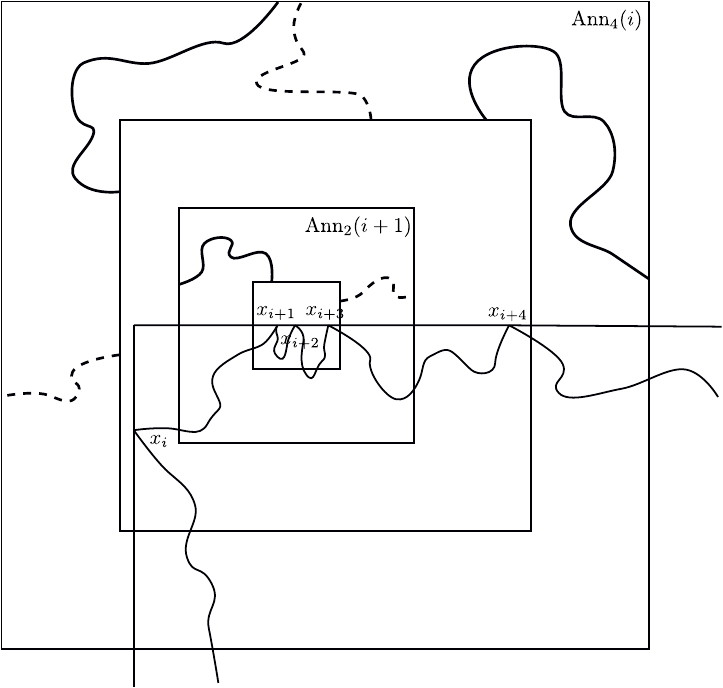}
		\caption{$x_i, x_{i+1}, x_{i+2}, x_{i+3}$, and $x_{i+4}$ are five vertices on $\partial B(2^{n_1})$. There are two disjoint crossings, one open and one dual-closed, in the annulus $\mathrm{Ann}_2(i+1)$. There are four disjoint crossings, alternatingly dual-closed and open, in the annulus $\mathrm{Ann}_4(i)$.}
		\label{fig:arm-sep-2}
	\end{figure}
	
	We now formally define $\mathcal I_j$ and $\mathcal L_j$. Recall that $x_1, \dots, x_{2m}$ are $2m$ vertices on $\partial B(2^{n_1})$. Let us again use cyclic indexing, i.e. $i= i \bmod (2m)$. Recall further that each level-$j$ annulus is crossed by $j$ arms and encloses $j+1$ vertices. The purpose of defining these annuli (and groupings of vertices) is to identify which arms are close relative to the scale and which ones are far away.
	
	We start with level $1$. Let $\mathcal I_1$ be the collection of indices such that the index $i$ is $I_1$ if the distance between $x_i$ and $x_{i+1}$ is logarithmically smaller than the distance between them and any other adjacent vertices, that is, $i \in \mathcal I_1$ if
	\begin{equation} \label{eq:level-1-cond}
		|x_i - x_{i+1}|< 2^{-5}\cdot \min \{|x_{i-1} - x_i|, |x_{i+1}- x_{i+2}|\}.
	\end{equation}
	For any $i \in \mathcal I_1$, we define 
	\begin{align*}
		\ell_1(i) := \min \{\ell: \exists x\in 2^\ell \Z^2 \cap \partial B(2^{n_1}) \text{ such that } B(x, 2^\ell) \supset \{x_i, x_{i+1}\}\}.
	\end{align*}
	Let $x_1(i)$ be the center for such a box $B(x_1(i), 2^{\ell_1(i)})$. If there are several choices for $x_1(i)$, we choose the first in lexicographical order. Next, we define 
	\begin{align*}
		\ell_1'(i) := \min\{\ell \ge \ell_1(i): B(x_1(i), 2^\ell) \ni x_{i-1} \text{ or } B(x_1(i), 2^\ell) \ni x_{i+2}\} - 3.
	\end{align*}
	Condition \eqref{eq:level-1-cond} guarantees the existence of $x_1(i)$ and ensures that $\ell_1(i) < \ell_1'(i) \le n_1$. Let $\mathrm{Ann}_{1}(i) := \mathrm{Ann}(x_1(i); 2^{\ell_1(i)}, 2^{\ell_1'(i)})$. Finally, we let $\mathcal L_1 := \{\mathrm{Ann}_1(i): i \in \mathcal I_1\}$. 
	
	For $j=2,\dots, 2m-2$, we define $\mathcal I_j$ and $\mathcal L_j$ inductively. Let $\mathcal I_j$ again be a collection of indices. An index $i$ is in $\mathcal I_j$ if
	\begin{equation} \label{eq:level-j-cond}
		\max_{k,\ell \in \{i, \dots, i+j\}} |x_k- x_\ell| < 2^{-5} \cdot \min \{|x_{i-1}-x_i|, |x_{i+j} - x_{i+j+1}|\}.
	\end{equation}
	For any $i \in \mathcal I_j$, we define
	\begin{align*}
		\ell_j (i) := \min &\{\ell: \exists x \in 2^\ell \Z^2 \cap \partial B(2^{n_1}) \text{ such that } \\
		& \quad B(x, 2^\ell) \supset \{x_i, \dots, x_{i+j}\} \cup (\cup_{h=1}^{j-1}\cup_{k=i}^{i+j} \mathrm{Ann}_h (k))\}
	\end{align*}
	where $\mathrm{Ann}_h(k) = \emptyset$ if $(k, k+1, \dots, k+h) \not \in \mathcal I_h$. Let $x_j(i)$ be the center for such a box $B(x_j(i), 2^{\ell_j(i)})$. If there are several choices for $x_j(i)$, we choose the first in lexicographical order. Next, we define
	\begin{align*}
		\ell_j'(i) := \min \{\ell \ge \ell_j(i): B(x_j(i), 2^\ell) \ni x_{i-1} \text{ or } B(x_j(i), 2^\ell) \ni x_{i+j+1}\} - 3.
	\end{align*}
	Again, condition \eqref{eq:level-j-cond} guarantees the existence of $x_j(i)$ and ensures that $\ell_j(i)< \ell_j'(i)\le n_1$. We let $\mathrm{Ann}_j (i) := \mathrm{Ann}(x_j(i); 2^{\ell_j(i)}, 2^{\ell_j'(i)})$ and $\mathcal L_j := \{\mathrm{Ann}_j(i): i \in \mathcal I_j\}$. Note that the definition of $\ell_j(i)$ ensures that level-$j$ annuli are disjoint from annuli of lower levels.
	
	The case $j= 2m-1$ is different from the previous cases: for there to be a level-$(2m-1)$ annulus, all $2m$ vertices must be concentrated relative to the scale of $\partial B(2^{n_1})$. We say $1 \in \mathcal I_{2m-1}$ if
	\begin{align*}
		\max_{k,\ell \in \{1,\dots, 2m\}} |x_k - x_\ell| < 2^{n_1-5}.
	\end{align*}
	If $\mathcal I_{2m-1}$ is nonempty, we define $\ell_{2m-1}$ similar to before:
	\begin{align*}
		\ell_{2m-1} := \min &\{\ell: \exists x \in 2^\ell \Z^2 \cap \partial B(2^{n_1}) \text{ such that } \\
		& \quad B(x, 2^\ell) \supset \{x_1, \dots, x_{2m}\} \cup (\cup_{h=1}^{2m-2}\cup_{k=1}^{2m} \mathrm{Ann}_h (k))\}
	\end{align*}
	Let $x_{2m-1}$ be the center for such a box $B(x_{2m-1}, 2^{\ell_{2m-1}})$. If there are several choices for $x_{2m-1}$, we choose the first in lexicographical order. For the second-to-last level, we define
	\begin{align*}
		\ell_{2m-1}' := n_1 - 2.
	\end{align*}
	Similarly to before, we define $\mathrm{Ann}_{2m-1}(1):= \mathrm{Ann}(x_{2m-1}; 2^{\ell_{2m-1}}, 2^{\ell_{2m-1}'})$. Let $\mathcal L_{2m-1} = \{\mathrm{Ann}_{2m-1}(1)\}$ if $\mathcal I_{2m-1}$ is nonempty and empty otherwise.
	
	Finally, we define $\mathcal L_{2m} := \{\mathrm{Ann}_{2m}(1)\} = \{\mathrm{Ann}(2^{n_1+1}, 2^{n_2})\}$.

\end{proof}

\begin{Remark}
	Although Lemmas \ref{lemma:ext-arm-sep} and \ref{lemma:int-arm-sep} are stated for $2m$ alternating arms, the proof can be adapted to accommodate any color sequence such that the dual-closed arms and defect edges are matched, thus including the three-arm case. For consecutive open arms, the constructions in step one and two remain the same except there are multiple open arms emenating from $\beta_i$ in the definition of $\mathcal A(R)$. This subsequently changes the definitions of $E_j(i)$, but the argument carries through as the arm separation statement still holds. Consecutive closed arms can be considered as having zero open arm between them, the argument for which follows the consecutive open arms case essentially.
\end{Remark}

\section{Outline of the Proof of Theorem \ref{thm:main}} \label{sec:outline}

As an extension to the results derived in \cite{DHS21}, the proof of the main result follows the same strategy with modifications in certain arguments. For completeness, we outline the proof with an emphasis on the present application and point to the main differences. An alternate, more detailed outline is offered in \cite[Section 2]{DHS21}.

The proof is essentially divided into three steps: In the first step, we construct shortcuts around edges on the lowest crossing and show that the existence of such shortcuts has a ``good'' probability. The second step uses an iterative scheme to improve upon shortcuts. Finally, we find the maximal collection of disjoint shortcuts and sum up the total savings.

\subsection*{Step 0: The lowest crossing $\ell_n$}
The estimate on the length of $\ell_n$ relies on the observation that $\ell_n$ consists only of three-arm points: since $\ell_n$ is the lowest crossing, by duality, from every edge $e$ in $\ell_n$ there are two disjoint open arms and a dual-closed arm to distance $\mathrm{dist}(e, B(n))$. In conjunction with some smoothness control, we have
\begin{equation}
	\E[\#\ell_n \mid \mathcal H_n] \le Cn^2 \pi_3(n).
\end{equation}

\subsection*{Step 1: Construction of shortcuts}
For any $\eps>0$, an edge $e$ on the lowest crossing $\ell_n$, we look for an arc $r$ over $e$ that saves at least $(1/\eps -1)\#r$ edges. The event $\hat E_k(e) = \hat E_k(e,\eps, \delta)$ discribes such an arc circumventing $e$ on scale $k$. The exact definition of $\hat E_k$ is quite involved, see \cite[Section 5]{DHS21}. We only state the properties and results relevant to the argument:
\begin{enumerate}
	\item $\hat E_k(e)$ depends only on $\mathrm{Ann}(e;2^k,2^K)$ where $K = k +  \lfloor \log (1/\eps) \rfloor$;
	
	
	\item \label{property:prop-5/4} For each $e \in \ell_n$, $\hat E_k(e,\eps, \delta)$ implies the existence of an $\delta$-shortcut around $e$. That is, there is an open arc $r\subset B(e, 3\cdot 2^k)$ such that $r$ only intersects with $\ell_n$ at its two endpoints $u(e)$ and $v(e)$ and
	\begin{equation*}
		\frac{\# r}{\#\tau} \le \delta;
	\end{equation*}
	where $\tau$ denotes the portion of $\ell_n$ between $u(e)$ and $v(e)$. See \cite[Proposition 5.4]{DHS21}.
	
	\item \label{property:5/29} $\hat E'_k$ is a similar event to $\hat E_k$ on scale $k$ that relates to a ``U-shaped region'' and $\mathfrak{s}_k$ is the shortest path in the U-shaped region.  If for some $\eps \in (0,\frac12)$, $\delta>0$, and $k\ge 1$, 
	\begin{equation*}
		\E[\# \mathfrak{s}_k \mid \hat E_k'] \le \delta 2^{2k} \pi_3(2^k)
	\end{equation*}
	holds,	then for all $L\ge 1$, 
	\begin{equation} \label{eq:ej-cond-3arm}
		\phi_{B(n)}^\xi(\hat E_k(e, \epsilon, \delta) \mid A_3(e, 2^L)) \ge c_0\eps^4 \quad \text{for all $L\ge 1$}.
	\end{equation}
	See \cite[Equation (5.29)]{DHS21}.
\end{enumerate}

Property (\ref{property:prop-5/4}) relies mostly on topological considerations and therefore applies to the random cluster model. For property (\ref{property:5/29}), we refrain from elaborating further despite the original proof using, at times, independence, generalized FKG, and gluing constructions. This is because we feel the techniques to convert these arguments for the random cluster model are sufficiently represented in the proofs that we do include, especially in that of the following proposition; and to completely reproduce all necessary parts of the proof of property (\ref{property:5/29}) would require lots of notation and mostly verbatim steps that translate directly for the random cluster model.

The key result in this step is that the probability that no shortcut exists for any scale $k$ is small:
\begin{Prop} \label{thm:no-shortcut}
	There is a constant $c_{13}$ such that if $\delta_j>0$, $j=1,\dots, L$, is a sequence of parameters such that for some $\eps \in (0,\frac14)$, 
	\begin{equation} \label{eq:delta-j}
		\E[\# \frak s_j \mid \hat E'_j] \le \delta_j 2^{2j} \pi_3(2^j),
	\end{equation}
	then for any $L'<L$,
	\begin{equation*}
		\phi_{B(n)}^\xi\left(\cap_{j=L'}^L \hat E_j(e,\eps,\delta_j)^c \mid A_3(e,2^L)\right) \le 2^{- c_{13}\frac{\eps^4}{\log(1/\epsilon)}(L-L')}.
	\end{equation*}
\end{Prop}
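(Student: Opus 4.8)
The plan is to deduce Proposition \ref{thm:no-shortcut} from the large deviation bound of Theorem \ref{thm:large-dev}, by feeding in the shortcut events $\hat E_j$ as the family $E_k$ there, with scales spaced out widely enough that the events land in disjoint dyadic bands and the compound decoupling circuits $\hat{\mathfrak C}_k$ have room to form. Fix $\eps\in(0,\tfrac14)$ and set $N:=\max\{N_0,\lceil\log(1/\eps)\rceil\}$, with $N_0$ the constant from Theorem \ref{thm:large-dev}. For each admissible index $k$ put
\[
	E_{10k+5}:=\hat{E}_{(10k+5)N}\bigl(e,\eps,\delta_{(10k+5)N}\bigr).
\]
By property (1) of $\hat E_j$, this depends only on the edges in $\mathrm{Ann}\bigl(e;2^{(10k+5)N},2^{(10k+5)N+\lfloor\log(1/\eps)\rfloor}\bigr)\subseteq\mathrm{Ann}\bigl(e;2^{(10k+5)N},2^{(10k+6)N}\bigr)$, where the inclusion uses $N\ge\lfloor\log(1/\eps)\rfloor$. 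Re-centering the annuli of Theorem \ref{thm:large-dev} at $e$ (legitimate since all our RSW, quasi-multiplicativity, and gluing estimates are uniform in the center and in the boundary condition, and since $2^L\le n$ with $e$ in the bulk), hypothesis (A) is satisfied; the same re-centering replaces $A_3(2^L)$ by $A_3(e,2^L)$ throughout.

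The substantive point is hypothesis (B): one needs a constant $c_2=c_2(\eps)>0$, which I expect to be of order $\eps^4$, with
\[
	\phi_{B(n)}^\xi\bigl(E_{10k+5}\cap\hat{\mathfrak C}_k\mid A_3(e,2^L)\bigr)\ge c_2
\]
for all $n$ and all $0\le k\le\tfrac{L}{10N}-1$. I would argue exactly as in the ``putting everything together'' paragraph of Section \ref{sec:cond-decor}: the hypothesis \eqref{eq:delta-j} of the proposition is precisely \eqref{eq:ej-cond-3arm} applied at the scale $(10k+5)N\le L$, giving $\phi_{B(n)}^\xi(E_{10k+5}\mid A_3(e,2^L))\ge c_0\eps^4$; the circuits composing $\hat{\mathfrak C}_k$ live in the bands $\mathrm{Ann}(2^{(10k+i)N},2^{(10k+i+1)N})$ for $i=0,1,4,6,9$, which are disjoint from the band $i=5$ carrying $E_{10k+5}$, and each of $\mathfrak C_{10k+i}$, $\mathfrak D_{10k}$ has constant conditional probability by RSW; one then threads the two open and one dual-closed arms of $A_3(e,2^L)$ through these circuits at constant multiplicative cost via the gluing construction of Proposition \ref{prop:gluing}, which only lowers the probability by a bounded factor. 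The hard part will be this gluing step: one must verify that the arcs produced by $\hat E_{(10k+5)N}$ and the arms produced by $A_3(e,2^L)$ reach the inner and outer circles of each band in the well-separated, color-matched configuration that Proposition \ref{prop:gluing} requires, which is exactly where the arm-separation lemmas of Section \ref{sec:ext-sep} are invoked.

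With (A) and (B) established, Theorem \ref{thm:large-dev} yields, for $L-L'$ large enough that the index range is nonempty (otherwise the claimed bound exceeds $1$ and is trivial),
\[
	\phi_{B(n)}^\xi\Bigl(\textstyle\sum_{k=\lceil L'/(10N)\rceil}^{\lfloor L/(10N)\rfloor-1}\mathbf 1\{E_{10k+5},\hat{\mathfrak C}_k\}\le c_1c_2\tfrac{L-L'}{N}\;\Big|\;A_3(e,2^L)\Bigr)\le\exp\!\Bigl(-c_1c_2\tfrac{L-L'}{N}\Bigr).
\]
Now $\bigcap_{j=L'}^L\hat E_j(e,\eps,\delta_j)^c\subseteq\bigcap_k E_{10k+5}^c\subseteq\{\sum_k\mathbf 1\{E_{10k+5},\hat{\mathfrak C}_k\}=0\}$, and $0\le c_1c_2\tfrac{L-L'}{N}$, so the left-hand side of Proposition \ref{thm:no-shortcut} is at most $\exp(-c_1c_2(L-L')/N)$. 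Substituting $c_2\asymp\eps^4$ and $N\asymp\log(1/\eps)$, and converting the exponential to base $2$, gives $2^{-c_{13}\frac{\eps^4}{\log(1/\eps)}(L-L')}$ for a universal $c_{13}>0$ (for $\eps$ bounded away from $0$ the statement is immediate after adjusting $c_{13}$), which is the asserted bound. The structure here mirrors the corresponding step in \cite{DHS21}, with Proposition \ref{prop:gluing} and the domain Markov property standing in for independence and generalized FKG.
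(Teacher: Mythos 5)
Your proposal is correct and follows essentially the same route as the paper: define $E_{10k+5}=\hat E_{(10k+5)N}$ with $N\asymp\log(1/\eps)$, verify hypothesis (B) of Theorem \ref{thm:large-dev} via \eqref{eq:ej-cond-3arm} together with RSW and the gluing construction of Proposition \ref{prop:gluing} to install the circuits $\hat{\mathfrak C}_k$ at constant cost, then use the inclusion $\bigcap_j\hat E_j^c\subset\{\sum_k\mathbf 1\{E_{10k+5},\hat{\mathfrak C}_k\}=0\}$ and the large deviation bound. Your added care about hypothesis (A) and the choice $N=\max\{N_0,\lceil\log(1/\eps)\rceil\}$ are harmless refinements of the paper's argument.
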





\begin{proof}[Proof of Proposition \ref{thm:no-shortcut} subject to Theorem \ref{thm:large-dev}]
	Let $E_k = \hat E_{kN}$. By property (\ref{property:5/29}), the combination of \eqref{eq:delta-j} with the observation that the occurrence of a circuit in $\mathrm{Ann}(2^{(10k+i)N}, 2^{(10k+i+1)N})$ conditional on a three-arm event has constant probability due to RSW and gluing constructions (see Proposition \ref{prop:gluing}) implies that
	\begin{align*}
		\phi_{B(n)}^\xi (E_{10k+5} \cap \hat{\mathfrak{C}}_k \mid A_3(2^{L})) \ge c_0' \eps^4
	\end{align*}
	for $0\le k\le \frac{L}{10N}-1$. Note that $c_{0}'$ is uniform in $k$. We observe the following chain of set inclusions with changes of indices including $j=\ell N$ in the equality and $\ell = 10k$ in the first inclusion:
	\begin{align*}
		\bigcap_{j=L'}^{L} \hat E_j^c = \bigcap_{\ell =\lceil \frac{L'}{N}\rceil}^{\lfloor \frac{L}{N}\rfloor} E_{\ell}^c
		\subset \bigcap_{k=\lceil \frac{L'}{10N}\rceil }^{\lfloor \frac{L}{10N}\rfloor-1} (E_{10k+5})^c
		\subset \left\{ \sum_{k=\lceil\frac{L'}{10N}\rceil}^{\lfloor\frac{L}{10N}\rfloor -1} \mathbf{1}\{E_{10k+5}, \hat{\mathfrak{C}}_k\} = 0 \right\}.
	\end{align*}
	Thus, applying Theorem \ref{thm:large-dev} by choosing $N= \lfloor \log(1/\eps)\rfloor$ and $L-L' \ge 40$, we obtain
	\begin{align*}
		\phi_{B(n)}^\xi \left(\cap_{k=L'}^L \hat E_{k}(e,\eps, \delta_j)^c \;\Big\vert\; A_3(e, 2^{L})\right) 
		& \le \phi_{B(n)}^\xi \left( \sum_{k=\lceil\frac{L'}{10N}\rceil}^{\lfloor\frac{L}{10N}\rfloor -1} \mathbf{1}\{E_{10k+5}, \hat{\mathfrak{C}}_k\} = 0 \:\Bigg\vert\; A_3(2^{L})\right) \\
		& \le \exp(-\frac{cc_0' \epsilon^4}{\log(1/\epsilon)}(L-L')). 
	\end{align*}
\end{proof}

\subsection*{Step 2: Iteration in the ``U-shaped region''}
In this step, we inductively improve the length of the ``best possible'' shortcuts for a fixed scale. The function of this step is to ensure that \eqref{eq:delta-j} is satisfied. \begin{Prop}[\cite{DHS21}, Proposition 7.1]
	There exist constants $C_1, C_2$ such that for any $\eps > 0$ sufficiently small, $L \ge 1$, and $2^k \ge (C_1\eps^{-4}(\log(1/\eps)^2))^L$, we have
	\begin{equation*}
		\E[\# \mathfrak{s}_k \mid \hat E'_k] \le (C_2 \eps^{1/2})^L 2^{2k}\pi_3(2^k).
	\end{equation*}
\end{Prop}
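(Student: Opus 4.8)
The plan is to prove the proposition by induction on $L$, running the inductive scheme of \cite[Section~7]{DHS21} and replacing each of its Bernoulli inputs by the random-cluster substitutes assembled above: the domain Markov property in place of independence, Proposition~\ref{prop:gluing} in place of the generalized FKG inequality, and the uniformity of arm and crossing probabilities over boundary conditions (quad-crossing RSW, quasi-multiplicativity) to absorb the boundary conditions generated along the way. The reason the induction closes is that the level-$L$ bound is exactly what is needed to verify the hypothesis \eqref{eq:delta-j} of Proposition~\ref{thm:no-shortcut}: once $\E[\#\mathfrak{s}_j\mid\hat E'_j]\le(C_2\eps^{1/2})^L2^{2j}\pi_3(2^j)$ is known for every scale $j$ down to the cutoff $2^{k_0}:=(C_1\eps^{-4}(\log(1/\eps))^2)^L$, Proposition~\ref{thm:no-shortcut} applies with $\delta_j=(C_2\eps^{1/2})^L$, so --- combined with property~(\ref{property:5/29}) and the gluing construction used to supply circuits conditionally on a three-arm event --- shortcuts of quality $(C_2\eps^{1/2})^L$ become available around all but an exponentially-thin set of the three-arm points along the relevant lowest crossing. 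Rebuilding $\mathfrak{s}_k$ out of those shortcuts improves the constant by a further factor $C_2\eps^{1/2}$ at the price of raising the cutoff by $\log_2(C_1\eps^{-4}(\log(1/\eps))^2)$; the precise balancing of these two effects --- which is what determines the exponents $4$, $2$ and $1/2$ --- is the elementary optimization of \cite[Section~7]{DHS21} and transfers without change.

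Concretely, I would proceed as follows. For the base case, the lowest crossing $\ell$ of the U-shaped region consists of three-arm points relative to their distance from the boundary of the region (a topological/duality fact, exactly as in Step~0), so by the smoothness input $\E[\#\ell\mid\hat E'_k]\le C2^{2k}\pi_3(2^k)$, which is the $L=0$ statement with $\delta_j\equiv C$. For the inductive step, given the level-$(L-1)$ bound I verify \eqref{eq:delta-j} with $\delta_j=(C_2\eps^{1/2})^{L-1}$, apply Proposition~\ref{thm:no-shortcut} together with property~(\ref{property:5/29}) to produce a dense family of shortcuts of quality $(C_2\eps^{1/2})^{L-1}$, take a maximal disjoint subfamily, attach each shortcut to $\ell$ at constant cost using the gluing construction of Section~\ref{sec:gluing} (a connection event of a single color, for which the FKG inequality suffices), and estimate the conditional expected length of the resulting competitor to $\mathfrak{s}_k$. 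Following the bookkeeping of \cite[Section~7]{DHS21} --- the exceptional, un-shortcut portions controlled by the exponential bound of Proposition~\ref{thm:no-shortcut}, the near-boundary portions controlled by quasi-multiplicativity, with $C_1$ taken large enough that their contribution is negligible compared with $(C_2\eps^{1/2})^L2^{2k}\pi_3(2^k)$ --- this yields the level-$L$ bound, valid once $2^k\ge(C_1\eps^{-4}(\log(1/\eps))^2)^L$.

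The only step that is genuinely sensitive to the model is this length estimate, where \cite{DHS21} treats the contributions of disjoint scale-annuli along $\ell$ as independent. Here I would instead reveal the configuration from the boundary of the region inward, annulus by annulus, applying the domain Markov property at each stage exactly as in the proofs of Lemma~\ref{lemma:decoupling} and Proposition~\ref{prop:enough-circ}: each reduction lands on a sub-domain whose induced boundary condition perturbs three-arm and crossing probabilities only by a universal multiplicative constant, so the product over scales appearing in the Bernoulli estimate survives with its constants merely inflated. The conditioning on $\hat E'_k$ and on the simultaneous three-arm events at the many shortcut locations must be carried inside this exploration and handled by splitting along innermost circuits, again as in Lemma~\ref{lemma:decoupling}, so that no appeal to Reimer's inequality or to a generalized FKG inequality is required.

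The main obstacle I anticipate is precisely this reorganization: one must choose an order of exploration compatible both with the geometry of the U-shaped region and with the scale hierarchy of Proposition~\ref{thm:no-shortcut}, while keeping track at every stage of which boundary conditions are free, wired, or pinned by the circuits already revealed --- the same bookkeeping that made the proof of Lemma~\ref{lemma:decoupling} delicate. Everything else --- the topological construction of shortcuts (property~(\ref{property:prop-5/4})), the three-arm-point counting, and the optimization over $\eps$, $C_1$ and $C_2$ --- is routine and transfers essentially verbatim from \cite{DHS21}.
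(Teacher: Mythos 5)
Your proposal takes essentially the same route as the paper, which in fact offers no proof of this proposition at all: it cites \cite[Proposition 7.1]{DHS21}, records only the heuristic (a base estimate $C_0 2^{2k}\pi_3(2^k)$ from three-arm/five-arm point counting, then an iterated gain of $\sim\eps^{1/2}$ per level whenever $2^k\ge C(\eps)^i$), and asserts that the model-sensitive steps transfer via the domain Markov property and the gluing construction of Section \ref{sec:gluing}. Your inductive scheme --- level-$(L-1)$ bounds at smaller scales verifying \eqref{eq:delta-j}, feeding Proposition \ref{thm:no-shortcut} to produce shortcuts that upgrade the scale-$k$ bound to level $L$ --- is the correct structure of the \cite{DHS21} argument, and your identification of where independence and generalized FKG must be replaced matches the substitutions the paper develops elsewhere, so your sketch is, if anything, more explicit than the paper's.
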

The constructions are detailed in \cite[Section 6 \& 7]{DHS21} and we only give the high level heuristics. In step $1$, shortcuts are constructed in a ``U-shaped'' region. Conditional on an event $\hat E_k'$ which is a superset of $\hat E_k$ for the ``U-shaped'' region at scale $2^k$, the starting estimate of a piece of shortcut is
\begin{equation} \label{eq:starting-shortcut}
	\E[\#\mathfrak s_k \mid E_k'] \le C_0 2^{2k}\pi_3(2^k).
\end{equation}
The factor $2^{2k}$ comes from the five-arm points in the construction. Suppose, at stage $i$, one can construct a shortcut of the order at most
\begin{equation*}
	\E[\#\mathfrak s_k \mid E_k'] \le \delta_k(i)2^{2k}\pi_3(2^k).
\end{equation*}
Through constructions detailed in \cite[Section 7]{DHS21}, we get an additional gain of $\sim \epsilon^{1/2}$ as long as there is enough space, i.e., when $2^k \ge C(\epsilon)^i$ for some $C(\epsilon)\sim \epsilon^{-4}(\log \frac1\epsilon)^2$. We then iterate this procedure. 

Since the proof of this proposition relies mostly on intricate algebraic manipulations, we simply cite the conclusion and refer the reader to the original paper for more explanation.

\subsection*{Step 3: Compilation}
The final estimate accounts for edges too close to the origin or the boundary, edges on $\ell_n$ that don't have shortcuts in step $1$, and a maximal collection of disjoint shortcuts that are optimized in step $2$. For the reader's benefit, we recreate the compilation here.

We first define a truncated box $\hat{B}(n) = B(n-n^\delta) \setminus B(n^\delta)$ for $\delta>0$ small enough such that $n^{1+2\delta} \le n^2\pi_3(n)$. For each $e\in \hat{B}(n)$, we let $L' = \lceil\frac{\delta}8 \log n\rceil$ and $L = \lfloor\frac{\delta}4 \log n\rfloor$.

We apply Proposition \ref{thm:no-shortcut} for $L' = \lceil\frac{\delta}8 \log n\rceil$ and $L = \lfloor\frac{\delta}4 \log n\rfloor$ and obtain
\begin{align*}
	\phi_{B(n)}^\xi(\text{there is no $n^{-c}$-shortcut around $e$} \mid e \in \ell_n)
	&\le \phi_{B(n)}^\xi\Big(\bigcap_{j = \lceil\frac{\delta}8 \log n\rceil}^{\lfloor\frac{\delta}4 \log n\rfloor} \hat{E}_j(e,\epsilon, n^{-c})^c \mid A_3(e, n^{\delta/2})\Big) \\
	&\le 2^{-c_{13} \epsilon^4\frac{\delta}{8} \log n } \\
	&\le n^{-\theta}
\end{align*}
for some $\theta>0$.

We choose a collection of $n^{-c}$-shortcuts around edges of $\ell_n$ such that the shortcuts are disjoint and the number of edges circumvented is maximal. Conditional on the existence of a horizontal crossing, any edge $e$ in $B(n)$ falls into one of three categories: in the margin of the box, with no $n^{-c}$-shortcut, or with a $n^{-c}$-shortcut. Thus, $S_n$ has the following estimate:
\begin{align*}
	\E_{\Z^2}[S_n \mid \mathcal H_n]
	&\le C n^{1+\delta} + n^{-\theta} \E[\# \ell_n\mid \mathcal H_n] +  n^{-c} \E[\# \ell_n \mid \mathcal H_n] \\
	&\le C n^{-\min\{\delta,\theta, c\}} n^2 \pi_3(n).
\end{align*}

\section{Estimating Without Reimer's Inequality} \label{sec:no-reimer}

In the radial case, there is no natural crossing like the lowest crossing to compare to. Instead, we consider ``lowest-like'' paths between successive circuits around the origin. One nuisance in this construction occurs when two circuits are close and there is not enough space for there to be three arms to a large distance. However, if this happens, closeby circuits form a bottleneck which implies an arm event with more than three arms. This ensures that the three-arm probability is an upper bound. The details of the construction are encapsulated in \cite[Lemma 2.3]{SR20}, and similarly \cite[Lemma 4.5, 4.7]{SR20}.

Recall that $A_3(n_1,n_2)$ denotes the three-arm event in the annulus $\mathrm{Ann}(n_1,n_2)$ and $\pi_3(n_1,n_2)$ its probability with domain $B(n)$ and boundary condition $\xi$. For any $j\ge 3$, let $A_j(n_1,n_2)$ ($\pi_{j}(n_1,n_2)$, resp.) denote the polychromatic $j$-arm event (probability, resp.) with exactly $j-1$ disjoint open arms and one dual-closed arm. Let $\pi'_j(n_1,n_2)$ denote the monochromatic $j$-arm probability. In an abuse of notation, for a box ``centered at an edge'', we write $B(e, n)$ in place of $B(e_x, n)$ where $e_x$ denotes the first endpoint of the edge $e$ in lexicographical order. 
\begin{Lemma}
	Fix $\eps>0$ and an integer $R$ such that for any $0\le n_1<n_2$, $\pi'_{2R+2}(n_1,n_2) \le \pi_3(n_1,n_2) (n_1/n_2)^\eps$. Let $\mathcal H_R(e,M)$ be the event that there exist $0=\ell_0\le \ell_1\le \cdots \le \ell_R = \lfloor \log(M)\rfloor$:
	\begin{enumerate}
		\item $A_{3,OOC}(e, 2^{\ell_1 - 1})$ occurs;
		\item for $i \ge 2$, if $\ell_{i-1} < \lfloor \log(M)\rfloor$, there are $2i$ disjoint open arms and one closed dual arm from $\partial B(e, 2^{\ell_{i-1}})$ to $\partial B(e, 2^{\ell_i - 1})$; and
		\item  if $\ell_R < \lfloor \log(M)\rfloor$, there are $2R + 2$ disjoint open arms from $\partial B(e, 2^{\ell_R})$ to $\partial B(e,M)$.
	\end{enumerate}
	Then,
	\begin{align*}
		\phi_{B(n)}^\xi (\mathcal H_R(e,M)) \le C M^{-r}\pi_3(M).
	\end{align*}
\end{Lemma}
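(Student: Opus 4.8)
The plan is to decompose $\mathcal H_R(e,M)$ over the choice of the bottleneck scales $\vec\ell=(\ell_0,\dots,\ell_R)$ and, for each fixed $\vec\ell$, to peel off the arm events annulus by annulus using the domain Markov property — this is precisely what replaces Reimer's inequality, which is unavailable here and which in the Bernoulli analysis of \cite{SR20} would be used to split the disjoint occurrence of the various arm events into a product of their probabilities. First I would write $\mathcal H_R(e,M)=\bigcup_{\vec\ell}\mathcal H_R^{\vec\ell}(e,M)$, a union over at most $(\log M)^{R}$ admissible choices of $0=\ell_0\le\cdots\le\ell_R=\lfloor\log M\rfloor$ (which, by the construction underlying \cite[Lemma 2.3]{SR20}, divide $[0,\lfloor\log M\rfloor]$ into comparably sized blocks, so every stage spans a definite fraction of the dyadic scales). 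For fixed $\vec\ell$, the event $\mathcal H_R^{\vec\ell}(e,M)$ is the intersection of the three-arm event $A_{3,OOC}$ in $B(e,2^{\ell_1-1})$, the $(2i+1)$-arm events ($2i$ open, one dual-closed) in $\mathrm{Ann}(e;2^{\ell_{i-1}},2^{\ell_i-1})$ for $2\le i\le R$, and the monochromatic $(2R+2)$-arm event in $\mathrm{Ann}(e;2^{\ell_R},M)$. The crucial structural point is that these events are supported on pairwise disjoint edge sets: consecutive annuli are separated by the buffer annuli $\mathrm{Ann}(e;2^{\ell_i-1},2^{\ell_i})$.

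For fixed $\vec\ell$, I would apply the domain Markov property iteratively from the inside out: conditioning on the configuration outside $B(e,2^{\ell_1-1})$ turns the innermost three-arm event into a probability under a random boundary condition, bounded above by its supremum over boundary conditions; then condition on the configuration outside $B(e,2^{\ell_2-1})$ to handle the next annulus, and so on out to $\partial B(e,M)$. Since arm-event probabilities in an annulus are comparable up to a multiplicative constant uniformly over all boundary conditions — a standard consequence of the quad-crossing RSW estimates and localization of arm events (\cite[Proposition 6.5]{DMT20}), already exploited elsewhere in this paper — each such supremum is comparable to the corresponding arm probability. This yields, with constants depending only on $R$,
\begin{equation*}
	\phi_{B(n)}^\xi\bigl(\mathcal H_R^{\vec\ell}(e,M)\bigr)\le C^{R}\,\pi_3(1,2^{\ell_1-1})\prod_{i=2}^{R}\pi_{2i+1}(2^{\ell_{i-1}},2^{\ell_i-1})\,\pi'_{2R+2}(2^{\ell_R},M).
\end{equation*}

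It then remains to bound the factors and recombine. For the middle factors ($2\le i\le R$) one has $\pi_{2i+1}(a,b)\le C(a/b)^{\beta}\pi_3(a,b)$ for some fixed $\beta=\beta(R)>0$, reflecting that requiring more than three disjoint arms across an annulus is polynomially costlier than requiring three: dropping arms, $\pi_{2i+1}(a,b)$ is dominated by a five-arm probability with both colors present, which is $\lesssim(a/b)^2$ by the universal five-arm estimate (\cite[Proposition 6.6]{DMT20}), while $\pi_3(a,b)\gtrsim(a/b)^{2-\beta}$ follows from an RSW-and-gluing construction of two open and one dual-closed crossing of well-separated angular sectors, glued scale by scale. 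For the outermost factor the hypothesis gives directly $\pi'_{2R+2}(2^{\ell_R},M)\le\pi_3(2^{\ell_R},M)\,(2^{\ell_R}/M)^{\eps}$. Substituting these and chaining with quasi-multiplicativity (\cite[Proposition 6.3]{DMT20}),
\begin{equation*}
	\pi_3(1,2^{\ell_1-1})\prod_{i=2}^{R}\pi_3(2^{\ell_{i-1}},2^{\ell_i-1})\,\pi_3(2^{\ell_R},M)\asymp\pi_3(1,M)\asymp\pi_3(M),
\end{equation*}
the accumulated scale-ratio gains telescope to a power of $1/M$, giving $\phi_{B(n)}^\xi(\mathcal H_R^{\vec\ell}(e,M))\le C^{R}M^{-r'}\pi_3(M)$ for some $r'>0$ depending on $\eps$, $\beta$, and the spacing of the $\ell_i$. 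Summing over the at most $(\log M)^{R}$ choices of $\vec\ell$ absorbs the polylogarithmic factor into a slightly smaller exponent and yields the statement with any $r\in(0,r')$.

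The main obstacle is the second step, the Reimer-free factorization. Everything rests on the fact that $\mathcal H_R(e,M)$ has been engineered as an intersection of arm events supported on disjoint nested annuli separated by buffer layers, so that the domain Markov property — not Reimer's inequality — can strip off the events one annulus at a time, provided the induced boundary conditions are controlled; this is exactly where the uniform-in-boundary-condition arm estimates are indispensable. Once that mechanism is in place, the arm-exponent bookkeeping in the third step is routine and essentially identical to the Bernoulli treatment in \cite{SR20}.
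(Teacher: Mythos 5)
Your outer scaffolding (decomposing over $\vec\ell$, peeling the disjoint nested annuli with the domain Markov property and uniform-in-boundary-condition arm estimates, then recombining with quasi-multiplicativity) is sound and matches what the paper does implicitly by deferring to \cite[Lemma 2.3]{SR20}. The genuine gap is in your bound for the middle factors, $\pi_{2i+1}(a,b)\le C(a/b)^{\beta}\pi_3(a,b)$, which is precisely the crux of this section — the paper isolates it as a standalone Proposition, $\pi_{2i+1}(n_1,n_2)\le (n_1/n_2)^{(2i-2)\alpha}\pi_3(n_1,n_2)$, and devotes the entire remaining argument (plus Appendix A) to proving it. Your route fails on both of its legs. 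First, dropping arms from the $(2i+1)$-arm event ($2i$ open, one dual-closed) yields a five-arm event with color sequence $OOOOC$, not the alternating sequence $OCOCO$; the universal exponent $2$ of \cite[Proposition 6.6]{DMT20} applies only to the alternating event, and the color-switching arguments that equate polychromatic five-arm exponents in Bernoulli percolation are not available for the random cluster model. Second, the lower bound $\pi_3(a,b)\gtrsim (a/b)^{2-\beta}$ with $\beta>0$ strictly is not a consequence of an RSW-and-gluing construction: gluing gives $\pi_3\gtrsim (a/b)^{K}$ for some uncontrolled $K$, and the containment $A_3\supset A_{5,OCOCO}$ only gives exponent at most $2$. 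Pushing the three-arm exponent strictly below $2$ requires exactly the ``each extra arm costs $(a/b)^{\alpha}$'' comparison you are trying to establish, so this step is circular.

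The paper's proof of the key inequality instead reduces to $\pi_{2i+1}(n_1,n_2)\le (n_1/n_2)^{\alpha}\pi_{2i}(n_1,n_2)$: condition on $2i$ of the arms by exploring the random region $\mathcal U$ bounded by an extremal dual-closed arm and an extremal open arm, apply the domain Markov property on $B(n)\setminus U$, and bound the conditional probability of the one remaining open arm by $(n_1/n_2)^{\alpha}$ using quad-crossing RSW across each dyadic sub-annulus of the (irregular) complementary region. The technical point your proposal misses entirely is that these quads $\mathrm{Ann}(2^{\ell},2^{\ell+1})\setminus U$ must have extremal distance bounded below uniformly over all admissible $U$; the paper handles this via the monotonicity of extremal distance under domain inclusion, verified in Appendix \ref{sec:ext-dist}. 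To repair your proof, replace your five-arm comparison by this conditioning argument (or simply invoke the paper's Proposition); the rest of your bookkeeping then goes through.
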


The above estimate relies essentially on the following proposition.
\begin{Prop} 
Let $0\le n_1<n_2$, $i\ge 1$, 
	\begin{align*}
		\pi_{2i+1}(n_1,n_2) \le \left(\frac{n_1}{n_2}\right)^{(2i-2)\alpha} \pi_3(n_1,n_2).
	\end{align*}
for some $\alpha \in (0,1)$.
\end{Prop}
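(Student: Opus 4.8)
The plan is to reduce the estimate to a bound for \emph{adding a single open arm} and then to iterate it $2i-2$ times. For $k\ge3$, write $\pi_k(n_1,n_2)$ for the $k$-arm probability in $\mathrm{Ann}(n_1,n_2)$ with $k-1$ disjoint open arms and one disjoint dual-closed arm, so that $\pi_3$ is the quantity in the statement and $A_{2i+1}(n_1,n_2)\subseteq A_3(n_1,n_2)$. I would first prove that there exist $\alpha_0\in(0,1)$, independent of $k,n_1,n_2$, and constants $C_k>0$ such that
\[
\pi_{k+1}(n_1,n_2)\ \le\ C_k\Big(\tfrac{n_1}{n_2}\Big)^{\alpha_0}\pi_k(n_1,n_2)\qquad(k\ge3).
\]
Applying this for $k=3,4,\dots,2i$ and composing yields $\pi_{2i+1}(n_1,n_2)\le C(i)\,(n_1/n_2)^{(2i-2)\alpha_0}\pi_3(n_1,n_2)$ with $C(i)=\prod_{k=3}^{2i}C_k$. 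Taking $\alpha\in(0,\alpha_0)$, the constant $C(i)$ is absorbed into the exponent as soon as $n_2/n_1$ exceeds an $i$-dependent threshold; for $n_2/n_1$ bounded the claim is immediate from $A_{2i+1}\subseteq A_3$ and a routine use of quad-crossing RSW, which (by blocking with short dual-closed crossings the three sectors determined by an $A_3$-configuration) gives $\pi_{2i+1}(n_1,n_2)\le\rho_0\,\pi_3(n_1,n_2)$ for some $\rho_0<1$. Thus the proposition follows, possibly with an $i$-dependent constant which is harmless for its application.

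For the one-arm-addition bound I would mirror the proof of Claim \ref{claim:reimer}. By the localization of arm events (\cite[Proposition 6.5]{DMT20}) I may fix well-separated intervals $I_1,\dots,I_{k+1}\subseteq\partial B(n_1)$ and $J_1,\dots,J_{k+1}\subseteq\partial B(n_2)$ carrying the colours $O,\dots,O,C$ in cyclic order, and replace $\pi_{k+1}$ and $\pi_k$ by their localized versions at the cost of a $k$-dependent constant. On the localized $(k+1)$-arm event, let $\gamma_c$ be the dual-closed arm landing in $I_{k+1},J_{k+1}$, let $\gamma$ be the open arm landing in $I_1,J_1$ (adjacent to $\gamma_c$), and let $\gamma'$ be the open arm ``next to'' $\gamma$ on the other side, chosen canonically exactly as the extremal arms $\gamma_1,\gamma_3$ are chosen in Claim \ref{claim:reimer}. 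Let $\mathcal U$ be the random region bounded by $\gamma'$, $\gamma_c$, $\partial B(n_1)$ and $\partial B(n_2)$ on the side of $\gamma'$ containing the remaining $k-2$ open arms, and let $V:=B(n_2)\setminus\mathrm{Int}(\mathcal U)$ be the complementary annular sector, which contains $\gamma$; let $\mathcal O_V$ be the event that an open path joins $\partial B(n_1)$ to $\partial B(n_2)$ inside $V$. Since $\mathcal U$ is measurable for an exploration performed from the $V$-side, the domain Markov property gives
\[
\pi_{k+1}^{\mathrm{loc}}(n_1,n_2)=\sum_{U\ \text{admissible}}\E\big[\phi_{V}^{\iota(U)}(\mathcal O_V)\big]\,\phi_{B(n_2)}^{\xi}(\mathcal U=U),
\]
with $\iota(U)$ the induced (random) boundary condition.

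The inner probability $\phi_{V}^{\iota(U)}(\mathcal O_V)$ is bounded exactly as the one-arm factor in Claim \ref{claim:reimer}: splitting the radial extent of $V$ into $\asymp\log_2(n_2/n_1)$ dyadic scales and applying quad-crossing RSW \eqref{eq:quad-rsw} together with a quasi-multiplicativity analogue, each scale contributes a factor $\le\rho<1$, uniform in the boundary condition, arising from a short dual-closed crossing of the sector that blocks the open connection (here one uses, as in \cite[\S6]{DMT20} and Section \ref{sec:ext-sep}, that $V$ can be taken to remain a fat sector at every scale, the extremal arms $\gamma',\gamma_c$ being well-separated). Hence $\phi_{V}^{\iota(U)}(\mathcal O_V)\le C(n_1/n_2)^{\alpha_0}$. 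Finally, ``$\mathcal U$ admissible'' is, up to the localization constants, the $k$-arm event with the corresponding colour pattern, so $\sum_{U}\phi_{B(n_2)}^{\xi}(\mathcal U=U)\le C_k\,\pi_k(n_1,n_2)$, and the last three displays combine to give the one-arm-addition bound.

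The main obstacle is the lack of Reimer's inequality: one cannot simply factor $\pi_{2i+1}\le\pi_3\cdot(\text{probability of }2i-2\text{ further open arms})$ and bound the second factor by a product of one-arm probabilities. Peeling the extra open arms off \emph{one at a time}, together with the localization/domain-Markov decomposition above, is precisely what reduces each step to a \emph{single}-arm estimate in a sub-domain, which is controllable by quad-crossing RSW and quasi-multiplicativity alone. The two points that require care are (i) verifying that the sector $V$ produced at each step is genuinely fat, so that RSW applies at every scale --- this is where the arm-separation input of Section \ref{sec:ext-sep} enters --- and (ii) tracking the $i$-dependent constants through the $2i-2$ iterations and absorbing them into the exponent as indicated above.
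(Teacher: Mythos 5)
Your overall architecture is exactly the paper's: reduce to a single-arm-addition estimate $\pi_{k+1}\le C(n_1/n_2)^{\alpha}\pi_k$, iterate it $2i-2$ times, and prove the one-step bound by conditioning on the region $\mathcal U$ containing all but one open arm (bounded by canonically chosen extremal arms), applying the domain Markov property, and showing the leftover one-arm probability in the complementary region decays polynomially via quad-crossing RSW and quasi-multiplicativity across dyadic scales. Your bookkeeping of constants (absorbing $C(i)$ by shrinking the exponent, treating bounded $n_2/n_1$ separately) is more explicit than the paper's and is fine for the intended application.

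There is, however, one step whose justification as written would fail. You assert that quad-crossing RSW applies at every dyadic scale because ``$V$ can be taken to remain a fat sector at every scale, the extremal arms $\gamma',\gamma_c$ being well-separated.'' Arm separation controls the landing points of the arms on $\partial B(n_1)$ and $\partial B(n_2)$ only; it gives no control on the geometry of $\gamma'$ and $\gamma_c$ at intermediate scales, where they may pinch arbitrarily close together, so $V$ need not be fat anywhere in between and cannot be ``taken'' to be. The correct observation --- and the one the paper uses --- is that fatness is not what is required. To get the factor $\rho<1$ per scale one needs the crossing probability of the quad $\mathcal D=\mathrm{Ann}(2^{\ell},2^{\ell+1})\setminus U$ from its inner arc $(ab)$ to its outer arc $(cd)$ to be at most $1-\eta$, and by \eqref{eq:quad-rsw} this only requires a uniform \emph{lower} bound on the extremal distance $\ell_{\mathcal D}[(ab),(cd)]$. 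Bottlenecks in $V$ only \emph{increase} this extremal distance: by the monotonicity \eqref{eq:ext-dist-comp} (verified in Appendix \ref{sec:ext-dist}), $\ell_{\mathcal D}[(ab),(cd)]\ge\ell_{\mathcal D'}[(ab),(cd)]\ge 1/16$ where $\mathcal D'$ is the full annulus with suitably enlarged side arcs. So the uniform applicability of RSW in the random domain holds for all admissible $U$, but for the opposite geometric reason to the one you give; as stated, your appeal to Section \ref{sec:ext-sep} does not close this step. A smaller point in the same part of the argument: for the domain Markov decomposition $\sum_U\E[\phi_{V}^{\iota(U)}(\mathcal O_V)]\phi(\mathcal U=U)$ to be legitimate, the event $\{\mathcal U=U\}$ must be measurable with respect to the edges \emph{outside} $V$ (i.e., in $\overline U$), so the extremal arms must be discoverable by exploring from the $\mathcal U$-side, not ``from the $V$-side'' as you write; this is how $\gamma_1,\gamma_3$ are chosen in Claim \ref{claim:reimer}.
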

In Bernoulli percolation, this is done by applying Reimer's inequality. A weak form of Reimer's inequality for the random cluster model can be found in \cite{BG13}. However, it requires the events to not only have disjoint occurrences but also occur on disjoint clusters. The arms in the arm event $\pi_{2i+1}$ that \cite{SR20} concerns belong to the same cluster, since they are portions of consecutive circuits chained together by a radial arm. Therefore, the weak estimate is not applicable to our problem. We provide a proof using conditional probability and quad-crossing RSW.
\begin{proof}
	It suffices to show that
	\begin{align*}
		\pi_{2i+1}(n_1,n_2) \le \left(\frac{n_1}{n_2}\right)^{\alpha} \pi_{2i}(n_1,n_2)
	\end{align*}
	for $2i\ge 3$.
	
	Since there is at least one dual-closed arm in any configuration of $A_{2i+1}(n_1,n_2)$, we condition on a dual-closed arm and the first open arm on its clockwise side and the (consecutive) first $2i-2$ disjoint open arms on its counterclockwise side and apply the domain Markov property. As in the proof of Claim \ref{claim:reimer} (but omitting many details here), let $\mathcal U$ denote the random region that contains the $2i$ arms and whose boundaries consist of a dual-closed arm, an open arm, and portions of $\partial B(n_1)$ and $\partial B(n_2)$. Then, conditional on $\mathcal U$, 
	\begin{align}
		\phi_{B(n)}^\xi(A_{2i+1, CO\cdots O} (n_1,n_2)) 
		&= \sum_{\text{admissible } U} \phi_{B(n)}^\xi (A_{1,O}(n_1,n_2, U^c) \mid \mathcal U=U) \phi_{B(n)}^\xi(\mathcal U=U) \nonumber \\
		&= \sum_{\text{admissible } U} \E_{B(n)}^\xi \left[\phi_{B(n) \setminus U}^\eta (A_1(n_1, n_2, U^c))\right] \phi_{B(n)}^\xi(\mathcal U=U) \label{eq:cond-all-but-one} 
	\end{align}
	where $A_1(n_1,n_2, U^c)$ is the one arm event in $\mathrm{Ann}(n_1,n_2)$ restricted to $U^c$ and $\eta$ is uniquely determined by $\xi$ and $U$.
	By quad-crossing RSW estimates \eqref{eq:quad-rsw}, the one-arm probability decays at $(n_2/n_1)^{-\alpha}$ for some $\alpha \in (0,1)$. 
	Applying the above estimate into \eqref{eq:cond-all-but-one} and we have
	\begin{align*}
		\eqref{eq:cond-all-but-one} 
		\le \left(\frac{n_1}{n_2}\right)^\alpha \sum_{\text{admissible } U} \phi_{B(n)}^0(\mathcal U=U) 
		= \left(\frac{n_1}{n_2}\right)^\alpha \phi_{B(n)}^0 (A_{2i, CO\cdots O}(n_1,n_2)).
	\end{align*}
	We note that to apply quad-crossing RSW, the extremal distance for each quad $\mathrm{Ann}(2^{\ell}, 2^{\ell+1}) \setminus U$, which for convenience we call $\mathcal D$ here, needs to be uniformly lower bounded over all admissible $U$. To our advantage, bottlenecks in $\mathcal D$ make the extremal distance larger. The boundary of $\mathcal D$ defines four arcs: $(ab)$ on $\partial B(2^\ell)$, $(cd)$ on $\partial B(2^{\ell+1})$, and $(bc)$ and $(da)$ in the interior of $\mathrm{Ann}(2^\ell, 2^{\ell+1})$. Indeed, if $\mathcal D$ is contained in another quad $\mathcal D'$ with the same landing arcs as $\mathcal D$, then 
	\begin{align} \label{eq:ext-dist-comp}
		\ell_{\mathcal D}[(ab), (cd)] \ge \ell_{\mathcal D'}[(ab), (cd)].
	\end{align}
	We verify \eqref{eq:ext-dist-comp} in Appendix \ref{sec:ext-dist}. Let $\alpha$ be a topological path in $U$, disjoint from $(bc)$ and $(da)$, and $\mathcal D'$ be all of $\mathrm{Ann}(2^\ell, 2^{\ell+1})$ with arcs $(ab)$, $(bc)'$, $(cd)$, and $(da)'$, where $(bc)'$ consists of a portion of $\partial B(2^\ell)$, $\alpha$, and a portion of $\partial B(2^{\ell+1})$, see the blue arc in Figure \ref{fig:quad-crossing}, and similarly, $(da)'$ consists of another portion of $\partial B(2^\ell)$, $\alpha$, and another portion of $\partial B(2^{\ell+1})$, see the red arc in Figure \ref{fig:quad-crossing}. Clearly, $\mathcal D$ is contained in $\mathcal D'$. Then,
	\begin{align*}
		\ell_{\mathcal D'}[(ab), (cd)] \ge \frac{2^\ell}{\min\{\#(ab), \#(cd)\}} \ge \frac1{16}.
	\end{align*}

	\begin{figure} 
		\centering
		\includegraphics[width=0.4\textwidth]{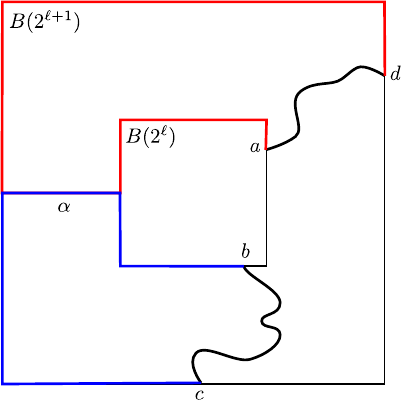}
		\caption{The blue arc is $(bc)'$ and the red arc is $(da)'$. They both traverse through $\alpha$.}
		\label{fig:quad-crossing}
	\end{figure}
\end{proof}

\appendix
\section{Extremal Distance and Resistance} \label{sec:ext-dist}

In this section, we verify \eqref{eq:ext-dist-comp} through the definition of extremal distance by the resistance of an electrical network.

\begin{Definition}[\cite{CDH16}]
	\begin{equation}
		\ell_\Omega[(ab),(cd)] := \sup_{g:\mathcal{E}(\Omega)\to \R_+} \frac{\left[\inf_{\gamma: (ab)\overset{\Omega}{\leftrightarrow} (cd)}\sum_{e\in \gamma} g_e\right]^2}{\sum_{e\in \mathcal{E}(\Omega)}g_e^2}.
	\end{equation}
\end{Definition}

Let $\Omega_2$ be a rectangle with vertices $a,b,c,d$, labeled in counterclockwise order. Then, the arcs $(ab)$, $(bc)$, $(cd)$, and $(da)$ are the four sides of the rectangle. Let $(bc)'$ be an arc from $b$ to $c$ contained in $\Omega_2$, and $(da)'$ be an arc from $d$ to $a$ contained in $\Omega_2$. Then, $\Omega_1$ bounded by $(ab)$, $(bc)'$, $(cd)$, and $(da)'$ is a subdomain of $\Omega_2$. We want to show 
\begin{equation} \label{eq:ext-dist-comp-2}
	\ell_{\Omega_1} [(ab), (cd)] \ge \ell_{\Omega_2} [(ab),(cd)].
\end{equation}

For any fixed $g:\mathcal E (\Omega_2) \to \R_+$, since $\mathcal E(\Omega_1)\subset \mathcal E (\Omega_2)$, we have $\{\gamma: (ab)\overset{\Omega_1}{\leftrightarrow}(cd)\} \subset \{\gamma: (ab)\overset{\Omega_2}{\leftrightarrow}(cd)\}$. Then,
\begin{align*}
	\inf_{\gamma: (ab)\overset{\Omega_1}{\leftrightarrow}(cd)} \sum_{e\in \gamma} g_e \ge \inf_{\gamma: (ab)\overset{\Omega_2}{\leftrightarrow}(cd)} \sum_{e\in \gamma} g_e.
\end{align*}
For the denominator, we use $\mathcal E(\Omega_1)\subset \mathcal E (\Omega_2)$ again:
\begin{align*}
	\sum_{e\in \mathcal E(\Omega_1)} g_e^2 \le \sum_{e\in \mathcal E(\Omega_2)} g_e^2.
\end{align*}
Therefore,
\begin{align*}
	\frac{\left[\inf_{\gamma: (ab)\overset{\Omega_1}{\leftrightarrow}(cd)} \sum_{e\in \gamma} g_e\right]^2}{\sum_{e\in \mathcal E(\Omega_1)} g_e^2} \ge \frac{\left[\inf_{\gamma: (ab)\overset{\Omega_2}{\leftrightarrow}(cd)} \sum_{e\in \gamma} g_e\right]^2}{\sum_{e\in \mathcal E(\Omega_2)} g_e^2}.
\end{align*}
\eqref{eq:ext-dist-comp-2} follows from taking supremum over all $g: \mathcal E(\Omega_2) \to \R_+$.

\bibliographystyle{plain}
\bibliography{reference}

\end{document}